\theoremstyle{definition}
\newtheorem{definition}{Definition}[section]
\newtheorem{proposition}[definition]{Proposition}
\newtheorem{theorem}[definition]{Theorem}
\newtheorem{corollary}[definition]{Corollary}
\newtheorem{lemma}[definition]{Lemma}
\newtheorem{remark}[definition]{Remark}
\def\R{{\mathbb{R}}}
\newcommand{\divergence}{\mathrm{div}\,}
\newcommand{\spt}{\mathrm{spt}\,}
\newcommand{\dist}{\mathrm{dist}\,}
\title{Generic mean curvature flow with obstacles}
\author{Tim Laux}
\address{(Tim Laux) Institute for Mathematics\\
         Heidelberg University\\
         Im Neuenheimer Feld 205\\
         D-69120 Heidelberg\\
         Germany}
\email{tim.laux@math.uni-heidelberg.de}
\author{Keisuke Takasao}
\address{(Keisuke Takasao) Department  of Mathematics\\
Graduate School of Science\\
        Kyoto University\\
        Kitashirakawa-Oiwakecho Sakyo\\
        Kyoto 606-8502\\
        Japan}
\email{k.takasao@math.kyoto-u.ac.jp}
\begin{document}
\maketitle

\begin{abstract}
    We study the obstacle problem associated to mean curvature flow. 
    We add to the geometric vanishing-viscosity approximation of Evans and Spruck a singular perturbation that penalizes the violation of the constraint, and pass to the limit. 
    The resulting level set formulation has unique solutions - up to fattening. 
    Extending the work of Evans and Spruck and a work by Ullrich and one of the authors, we show that generic level sets of this flow are distributional solutions of the obstacle problem.
    
    \medskip
    
     \noindent \textbf{Keywords:} Mean curvature flow; obstacle problem; gradient flow; compensated compactness

     \medskip
  
    \noindent \textbf{Mathematical Subject Classification (MSC20)}: 
    53E10 (Primary); 
    35K65; 
    35A15; 
    35D30; 
    35D40 
    
\end{abstract}

\section{Introduction and main results}

Mean curvature flow is one of the most fundamental geometric evolution equations behind many phenomena in science and engineering. 
In this paper, we are concerned with its associated obstacle problem, in which the flow is not allowed to penetrate certain regions.
In this paper, we construct solutions to the obstacle problem via Evans--Spruck's geometric vanishing viscosity approach. 
While this process naturally gives rise to the unique viscosity solution of the associated level set formulation, the main contribution of the present paper is a finer analysis revealing the evolution law for generic level sets of this solution:
Our main result shows that almost every level set is a BV solution to the obstacle problem for mean curvature flow.

There is a rich, but still growing stock of literature on the obstacle problem for mean curvature flow.
Almeida, Chambolle, and Novaga~\cite{MR2971026} extend Chambolle's version~\cite{MR2079603} of the implicit time discretization of Almgren-Taylor-Wang~\cite{MR1205983} and Luckhaus-Sturzenhecker~\cite{MR1386964} to the case of the obstacle problem and study its convergence in the planar case $d=2$. 
Mercier and Novaga~\cite{MR3421913} studied the graphical case, proved short-time existence of classical solutions, long-time existence of viscosity solutions of the graphical mean curvature flow, and studied (subsequential) asymptotics for large times towards stationary solutions.
We also mention the PhD thesis of Logaritsch~\cite{Logaritsch} in which the implicit time discretization in case of graphs with obstacles is studied in depth. Remarkably, in this case, one can prove the convergence of area and prove a variational inequality for the limit of this discretization.
Mercier~\cite{Mercier} and Ishii et al.~\cite{MR3746189} constructed viscosity solutions for the obstacle problem without the assumption of being a graph.
In the long-term limit, the mean curvature flow with obstacle is expected to converge to the mean convex hull of the obstacle.
Spadaro \cite{MR4069612} used this idea to show an optimal regularity result for the mean convex hull via the long-time limit of the implicit time discretization of the obstacle mean curvature flow.
Based on a forced Allen--Cahn approximation, in dimensions $d=2,3$ in~~\cite{MR4335627}, one of the authors constructed global weak solutions that satisfy a version of Brakke's formulation away from the obstacles. 
Together with Nik~\cite{MR4848696}, this dimensional restriction could be removed.
While these Brakke-type solutions describe the evolution away from the obstacles, they do not give any insight on the behavior at the contact with the obstacles.
Beside the viscosity solution, which relies on the comparison principle of two-phase mean curvature flow, we develop here the first weak solution concept that fully describes the solution to the obstacle problem.

Let $\Omega\subset \mathbb{R}^d$ be a bounded domain and let $\phi , \psi \in C^1 (\Omega)$ be functions 
such that $\phi < \psi $ in $\Omega$.
We want to construct and study level set solutions to mean curvature flow with obstacles $\phi$ and $\psi$.
That means that the level set function should satisfy the constraint 
\begin{equation}\label{eq:levelset_constraint}
    \phi \leq u \leq \psi.
\end{equation}
Away from the contact sets $\{u=\phi\}$ and $\{u=\psi\}$, this formulation should encode the classical level set mean curvature flow
\begin{equation}\label{eq:levelset_interior}
\partial_t u 
    =\Big(\nabla \cdot \frac{\nabla u}{|\nabla u|}  \Big)|\nabla u|,
\end{equation}
while on the contact sets, we need to encode the inequalities
\begin{equation}\label{eq:levelset_contact}
   \partial_t u 
    \geq\Big(\nabla \cdot \frac{\nabla u}{|\nabla u|}  \Big)|\nabla u| \quad \text{on }\{u=\phi\}
\quad \text{and} \quad 
   \partial_t u 
    \leq \Big(\nabla \cdot \frac{\nabla u}{|\nabla u|}  \Big)|\nabla u| \quad \text{on } \{u=\psi\}.
\end{equation}

We construct our solutions via a modification of the geometric vanishing-viscosity approximation of Evans--Spruck~\cite{MR1100206} in a potential
\[
    \partial_t u_\varepsilon 
    =\Big(\nabla \cdot \frac{\nabla u_\varepsilon}{\sqrt{\varepsilon^2+|\nabla u_\varepsilon|^2}} - V'_\varepsilon (u_\varepsilon)\Big)
    \sqrt{\varepsilon^2+|\nabla u_\varepsilon|^2},
\]
where the potential
\[
V_\varepsilon (u)
:= 
\frac{1}{\varepsilon} (\phi -u ) _+  ^4
+\frac{1}{\varepsilon} (u-\psi)_+^4
\]
penalizes the violation of the constraint and a variant with exponent $2$ instead of $4$ was first proposed in the preprint~\cite{Mercier}. Here, for regularity reasons, we choose this higher power.

The main novelty of our work is a connection between this viscosity solution and distributional solutions in the spirit of~\cite{{MR1315658}} and~\cite{MR4810481}.
The latter solution concept is based on the gradient-flow structure of the mean curvature flow with obstacles instead of the comparison principle.
Also the vanishing viscosity approximation has a gradient-flow structure: Defining the energy 
\begin{equation}
    E_\varepsilon(\tilde u):=\int_\Omega \sqrt{\varepsilon^2+|\nabla \tilde u|^2} +\, V_\varepsilon(\tilde u)\, dx,
\end{equation}
we compute, with $\delta E_\varepsilon$ the first variation (or $L^2$ gradient) of the energy:
\begin{equation}\label{eq:EDI}
    \frac{d}{dt} E_\varepsilon(u_\varepsilon(\cdot,t)) 
    =\int_\Omega \delta E_\varepsilon (u_\varepsilon) \partial_t u_\varepsilon \,dx
    = -\int_\Omega (\delta E_\varepsilon(u_\varepsilon))^2 \sqrt{\varepsilon^2+|\nabla u_\varepsilon|^2} \,dx \leq0.
\end{equation}
In particular, given well-prepared initial conditions, we obtain
\[
    \int_0^\infty \int_\Omega (\delta E_\varepsilon(u_\varepsilon))^2 \sqrt{\varepsilon^2+|\nabla u_\varepsilon|^2} \,dx \, dt<\infty.
\]
The key estimate that we establish here is
\begin{equation}\label{eq:strange_estimate_intro}
    \sup_{t>0} \int_\Omega |\delta E_\varepsilon(u_\varepsilon)| \,dx <\infty.
\end{equation}
This estimate is of a different flavor than the gradient-flow estimate above, as it does not include the area factor $\sqrt{\varepsilon^2+|\nabla u_\varepsilon|^2}$ and thus does not correspond to an estimate on each individual level set in the limit $\varepsilon\to0$.

\begin{figure}
    \centering
    \includegraphics[width=0.45\linewidth]{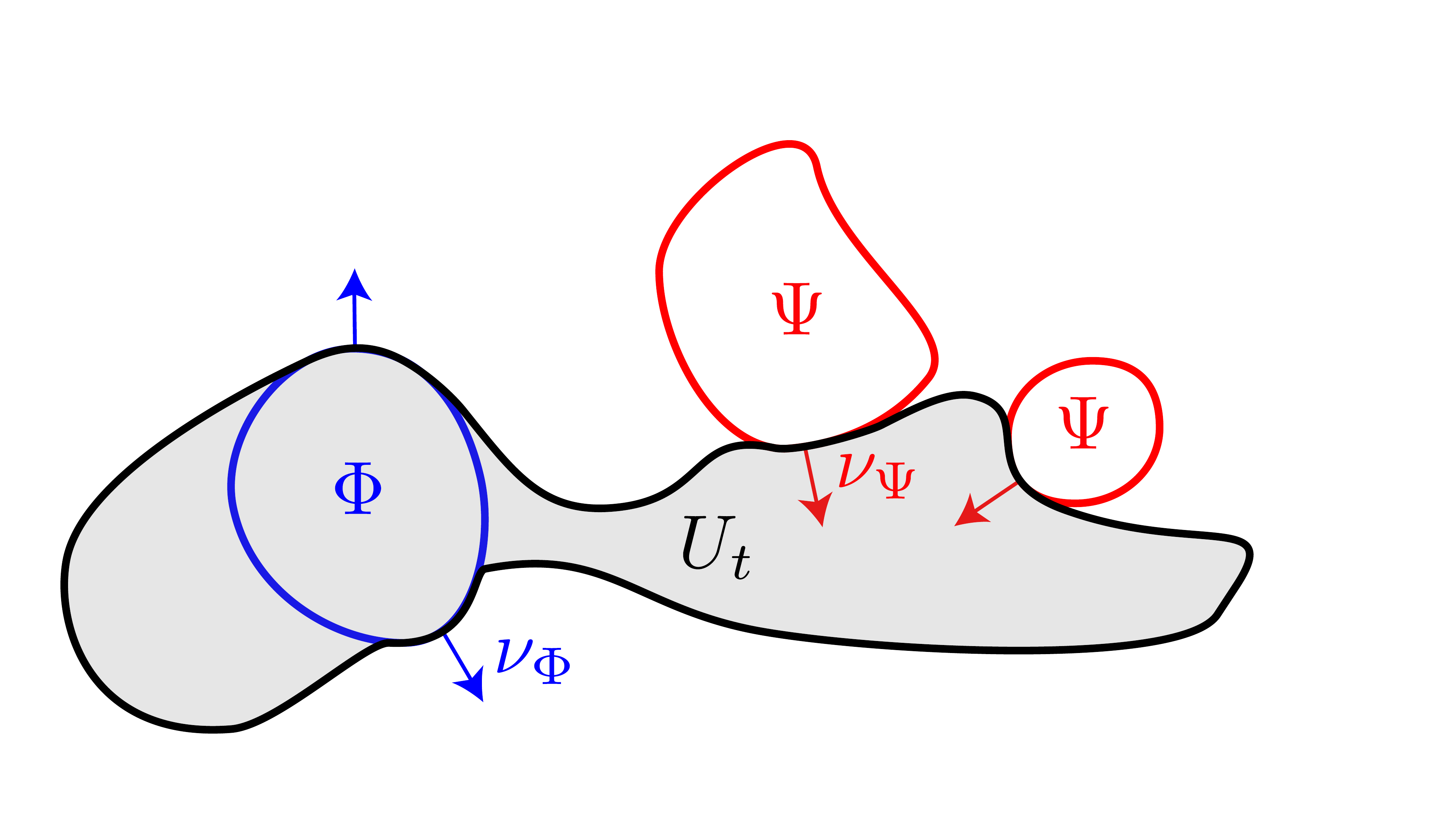}
    \includegraphics[width=0.45\linewidth]{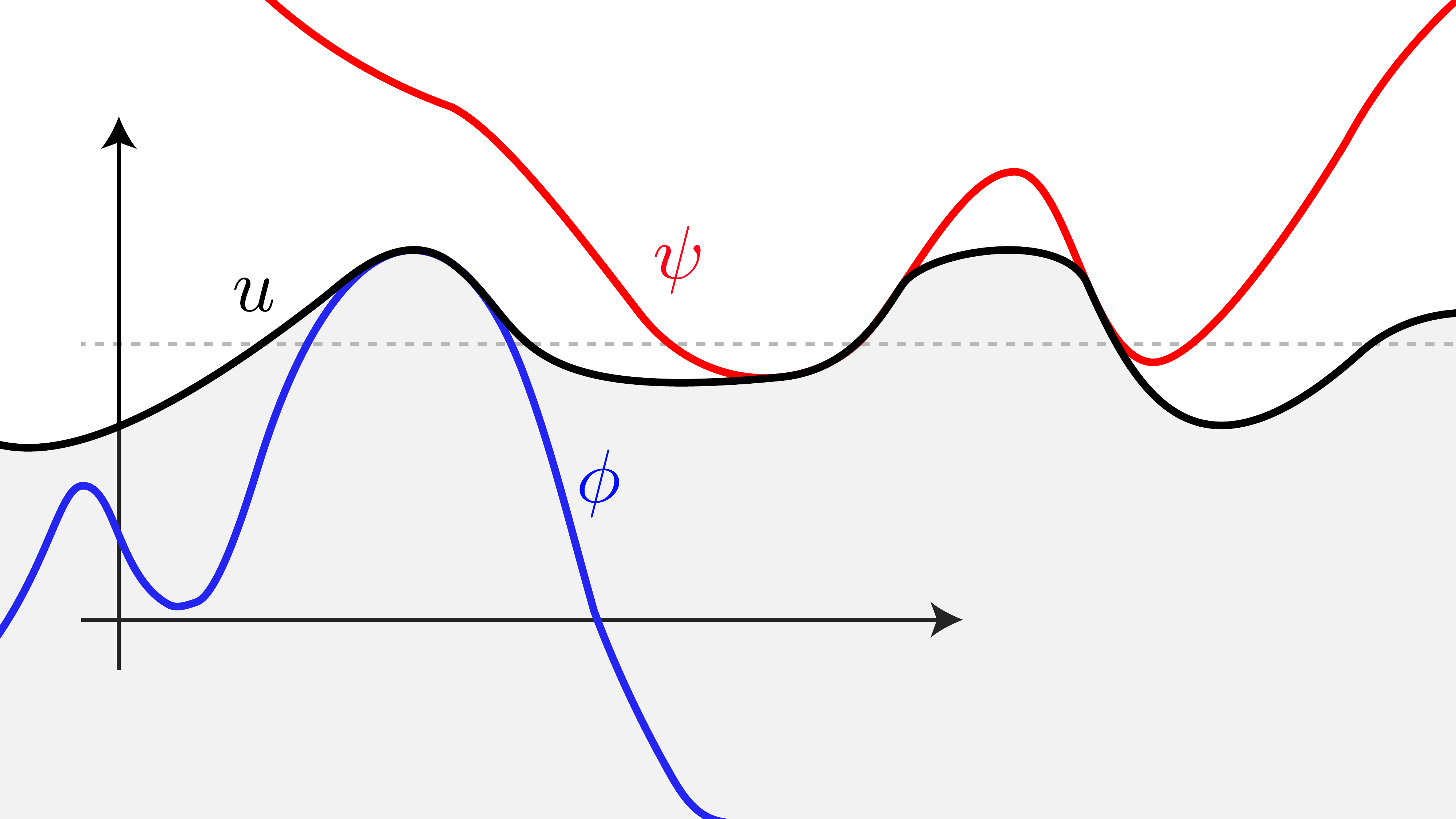}  
    \caption{A mean curvature flow with obstacles (left) and a corresponding level set flow (right; for simplicity, one spatial dimension is omitted here).}
    \label{fig:enter-label}
\end{figure}

We first verify the existence of viscosity solutions.
\begin{theorem}[Construction of viscosity solution]
\label{thm:viscositysol}
    Let $g \in C^2 (\Omega)$, $\phi, \psi\in C^1 (\Omega)$ with $\phi \leq g\leq \psi$ and let $u_\varepsilon$ be the unique classical solution of~\eqref{mcf}. 
    Then $u_\varepsilon \to u$ locally uniformly in $\Omega \times [0,\infty)$, where $u$ is the unique viscosity solution of~\eqref{eq:levelset_constraint}--\eqref{eq:levelset_contact} in the sense of Definition~\ref{def:viscositysol}. 
\end{theorem}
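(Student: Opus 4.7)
The plan is the Barles--Perthame method of half-relaxed limits. I define
\[
    u^*(x,t):=\limsup_{\substack{\varepsilon\to 0\\ (y,s)\to(x,t)}} u_\varepsilon(y,s),
    \qquad
    u_*(x,t):=\liminf_{\substack{\varepsilon\to 0\\ (y,s)\to(x,t)}} u_\varepsilon(y,s),
\]
verify that $u^*$ is a viscosity subsolution and $u_*$ a viscosity supersolution of the obstacle level set problem~\eqref{eq:levelset_constraint}--\eqref{eq:levelset_contact} in the sense of Definition~\ref{def:viscositysol}, and then invoke the comparison principle to conclude $u^*\le u_*$. Together with the trivial inequality $u_*\le u^*$ this gives $u^*=u_*=:u$; the function $u$ is then continuous, the unique viscosity solution, and a standard Dini-type upgrade promotes the equality of half-limits to locally uniform convergence $u_\varepsilon\to u$.

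I would begin by confirming that the prescribed $u_\varepsilon$ exists and enjoys uniform bounds. For each fixed $\varepsilon>0$, the approximating equation is quasilinear and uniformly parabolic, because $\sqrt{\varepsilon^2+|p|^2}\ge\varepsilon$, with smooth lower order term $V'_\varepsilon$. Classical quasilinear theory thus provides a unique smooth solution with the prescribed initial and boundary data. A uniform $L^\infty$ bound comes from the maximum principle and the sign of $V'_\varepsilon$ outside $[\phi(x),\psi(x)]$ (which drives violating values back into the obstacle range), combined with $\phi\le g\le\psi$. Lipschitz estimates in space and time are obtained, as in~\cite{MR1100206}, by differentiating the equation and applying the maximum principle. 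Moreover, the energy estimate~\eqref{eq:EDI} together with well-prepared data yields $\int_\Omega V_\varepsilon(u_\varepsilon)\,dx\to0$ uniformly in time, so the violation of the constraint vanishes in the limit and $\phi\le u_*\le u^*\le\psi$ on $\overline\Omega\times[0,\infty)$.

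Next I verify the sub- and supersolution property by the usual test-function argument. Given a smooth $\varphi$ touching $u^*$ strictly from above at $(x_0,t_0)$ with $\nabla\varphi(x_0,t_0)\ne 0$, I produce maximizers $(x_\varepsilon,t_\varepsilon)\to(x_0,t_0)$ of $u_\varepsilon-\varphi$ on a suitable compact set, along which $u_\varepsilon(x_\varepsilon,t_\varepsilon)\to u^*(x_0,t_0)$. Inserting $(x_\varepsilon,t_\varepsilon)$ into the approximating equation and passing to the limit reduces everything to the behavior of $V'_\varepsilon(u_\varepsilon(x_\varepsilon,t_\varepsilon))$. If $u^*(x_0,t_0)=\psi(x_0)$ the only required inequality in~\eqref{eq:levelset_contact} is $\partial_t u \le (\nabla\cdot \nabla u/|\nabla u|)|\nabla u|$, and the upper penalty only provides a favorable sign. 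Otherwise, $\phi(x_0)<u^*(x_0,t_0)<\psi(x_0)$, and by continuity of $\phi,\psi$ the maximizer stays strictly between the obstacles for all small $\varepsilon$, so $V'_\varepsilon(u_\varepsilon(x_\varepsilon,t_\varepsilon))=0$ and~\eqref{eq:levelset_interior} is recovered in the limit. The symmetric argument gives the supersolution property of $u_*$, and the degenerate case $\nabla\varphi(x_0,t_0)=0$ is handled exactly as in~\cite{MR1100206} by a doubling of variables.

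I expect the main obstacle to be the comparison principle for~\eqref{eq:levelset_constraint}--\eqref{eq:levelset_contact} itself: the one-sided inequalities on the contact sets break the symmetric structure of the Crandall--Ishii--Lions doubling argument, and one must ensure that at a maximum of $u^*-u_*$ the obstacle inequalities of the sub- and supersolution are compatible before extracting the classical level-set bound. Once this is in place, $u^*\le u_*$ on $\overline\Omega\times[0,\infty)$ is immediate, the common value $u$ is the unique viscosity solution, and the convergence $u_\varepsilon\to u$ is locally uniform.
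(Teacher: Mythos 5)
Your proposal is correct in spirit but routes through a different machine than the paper. The paper does not use half-relaxed limits: it first establishes uniform $C^1$ bounds on $u_\varepsilon$ (Proposition~\ref{prop:mp}), extracts a locally uniform limit $u$ by Arzel\`a--Ascoli (Theorem~\ref{thm:2.5}), verifies that $u$ is a viscosity solution by the standard test-function argument you describe (exploiting the sign of $V_\varepsilon'$), and finally upgrades subsequential to full-sequence convergence via the uniqueness from the comparison principle (Theorem~\ref{thm4.4}). Your Barles--Perthame variant is mathematically sound and is the more robust tool in general, but notice a tension in your own write-up: the main selling point of half-relaxed limits is that they sidestep a priori compactness, yet you also invoke uniform Lipschitz estimates (needed anyway to propagate the initial condition to $u^*(\cdot,0)\le u_*(\cdot,0)$ and to ensure $\phi\le u_*\le u^*\le\psi$ pointwise rather than merely a.e.). Once those Lipschitz bounds are in hand, Arzel\`a--Ascoli gives you the locally uniform limit outright, which is exactly the paper's shorter path. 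Two further points worth flagging: (1) your case split in the subsolution check omits the possibility $u^*(x_0,t_0)=\phi(x_0)$, which is vacuously fine (Definition~\ref{def:viscositysol} imposes nothing there) but should be said; and (2) you treat the comparison principle as a black box and only speculate about how the contact-set inequalities interact with doubling --- the paper's actual proof of Theorem~\ref{thm4.4} resolves this by showing that near the doubled maximum one is strictly away from both obstacles (via \eqref{comparison:eq8}--\eqref{comparison:eq9}), so the sup/inf-convolutions are genuine obstacle-free sub/supersolutions there and the classical Evans--Spruck argument runs unchanged. As stated, Theorem~\ref{thm4.4} requires \emph{continuous} sub/supersolutions, so if you insist on half-relaxed limits you must either prove their continuity (again via the Lipschitz bounds) or first extend the comparison principle to semicontinuous envelopes.
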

The proof can basically be found in the literature~\cite{Mercier,MR3746189}. We include it for completeness in Appendix~\ref{appendix}. 

\medskip

Now we can state our main result. It states that generic level sets of this viscosity solution are BV solutions to the obstacle problem.
In particular, this is the first derivation of (generic) existence of BV solutions in the presence of obstacles.

\begin{theorem}[Generic level sets are BV solutions]\label{thm:BVsol}
    Let $g, \phi, \psi$ be well-prepared data in the sense of Definition~\ref{def:well-prepared} below and let $u$ be the unique viscosity solution to~\eqref{eq:levelset_constraint}--\eqref{eq:levelset_contact}. 
    Then, for a.e.~$\gamma\in \R$, the evolving super-level set $U(t)=\{u(\cdot,t)>\gamma\}$ is a distributional solution to the mean curvature flow with obstacles $\Phi=\{\phi>\gamma\}$ and $\Psi = \{\psi>\gamma\}$ in the sense of Definition~\ref{def:BVsol} below.    
\end{theorem}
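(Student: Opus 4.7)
The plan is to start from the vanishing-viscosity solutions $u_\varepsilon$ produced by Theorem~\ref{thm:viscositysol}, slice them via the coarea formula, and derive a BV variational inequality on a.e.~level set by a compensated-compactness argument in the spirit of Evans--Spruck~\cite{MR1100206} and the recent work~\cite{MR4810481}. Setting
\[
H_\varepsilon := \delta E_\varepsilon(u_\varepsilon) = -\nabla\cdot\frac{\nabla u_\varepsilon}{\sqrt{\varepsilon^2+|\nabla u_\varepsilon|^2}} + V_\varepsilon'(u_\varepsilon),
\]
the approximating equation reads $\partial_t u_\varepsilon = -H_\varepsilon\sqrt{\varepsilon^2+|\nabla u_\varepsilon|^2}$. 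Well-preparedness of the data together with~\eqref{eq:EDI} provides uniform bounds on $E_\varepsilon(u_\varepsilon(\cdot,t))$ and on $\|H_\varepsilon\sqrt{\varepsilon^2+|\nabla u_\varepsilon|^2}\|_{L^2(\Omega\times(0,T))}$, while~\eqref{eq:strange_estimate_intro} controls $H_\varepsilon$ uniformly in $L^\infty_t L^1_x$. In particular, the approximate area measure $\mu_\varepsilon^t := \sqrt{\varepsilon^2+|\nabla u_\varepsilon|^2}\, dx$ and the obstacle forcing $V_\varepsilon'(u_\varepsilon)\, dx$ are uniformly bounded Radon measures for every $t$.

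Next I would slice by the level-set parameter $\gamma$ via the coarea formula
\[
\int_\Omega f(u_\varepsilon(\cdot,t))|\nabla u_\varepsilon|\, dx = \int_\R f(\gamma)\, \mathcal{H}^{d-1}(\{u_\varepsilon(\cdot,t)=\gamma\})\, d\gamma,
\]
and combine it with the area bound, the $L^2$ velocity bound from~\eqref{eq:EDI}, and the $L^1$ curvature bound~\eqref{eq:strange_estimate_intro} by Fubini in $(\gamma,t)$. This gives, for a.e.~$\gamma\in\R$: (i) the super-level sets $U_\varepsilon(t)=\{u_\varepsilon(\cdot,t)>\gamma\}$ have uniformly bounded perimeter; (ii) the normal velocities $v_\varepsilon = \partial_t u_\varepsilon/|\nabla u_\varepsilon|$ restricted to $\{u_\varepsilon=\gamma\}$ are bounded in $L^2(\mathcal{H}^{d-1}\otimes dt)$; and (iii) the scalar mean curvature of the level set is bounded in $L^1$. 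Together with the locally uniform convergence $u_\varepsilon\to u$ from Theorem~\ref{thm:viscositysol}, this yields $\chi_{U_\varepsilon(t)}\to \chi_{U(t)}$ in $L^1$, finite perimeter of $U(t)=\{u(\cdot,t)>\gamma\}$ for a.e.~$t$, and strong convergence of the outer normals $\nu_\varepsilon = -\nabla u_\varepsilon/|\nabla u_\varepsilon|$ on $\partial^* U(t)$ by a div-curl argument as in~\cite{MR1100206}.

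With these convergences in hand, I would test the equation against fields of the form $\eta(x,t)\cdot\nu_\varepsilon\,\chi(u_\varepsilon)$, where $\eta\in C_c^1(\Omega\times(0,T);\R^d)$ and $\chi$ is a smooth approximation of $\mathbf{1}_{(\gamma,\infty)}$, and then integrate by parts. The curvature contribution converts, in the limit $\varepsilon\to 0$, into the classical BV expression $\int_{\partial^* U(t)} (\nabla\cdot\eta - \nu\cdot (D\eta)\,\nu)\, d\mathcal{H}^{d-1}$. The obstacle term $V_\varepsilon'(u_\varepsilon)$ is signed --- nonnegative where $u_\varepsilon>\psi$ and nonpositive where $u_\varepsilon<\phi$ --- and by~\eqref{eq:strange_estimate_intro} converges (along a subsequence depending on $\gamma$) to a signed Radon measure supported on the contact set $\{u=\phi\}\cup\{u=\psi\}$. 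After sending $\chi\to\mathbf{1}_{(\gamma,\infty)}$, only the portion of this measure lying in $\{\phi>\gamma\}$ (respectively $\{\psi>\gamma\}$) contributes, and its sign translates precisely into the one-sided variational inequality of Definition~\ref{def:BVsol} for the obstacles $\Phi=\{\phi>\gamma\}$ and $\Psi=\{\psi>\gamma\}$.

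The hard part will be the joint passage to the limit in the nonlinear curvature term and the obstacle forcing: $V_\varepsilon'(u_\varepsilon)$ can concentrate in the normal direction near the contact set and could in principle interact with the singular part of $\mu_\varepsilon^t$ there, spoiling the identification of the limiting obstacle measure with a perimeter-carried object. Controlling this interaction is exactly where~\eqref{eq:strange_estimate_intro} is essential; it is not a consequence of the gradient-flow identity and represents the main technical novelty underpinning the BV identification. A secondary issue is ensuring a common null set of bad parameters $\gamma$ across all of the coarea--Fubini selections and the compensated-compactness conclusions, which is handled by the usual $L^1$-selection argument once all the $\gamma$-integrability is in place.
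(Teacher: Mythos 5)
Your proposal correctly identifies the key ingredients (the $L^1$ curvature bound \eqref{eq:strange_estimate_intro}, the $L^2$ dissipation from \eqref{eq:EDI}, compensated compactness, coarea), but the order in which you apply them would not work, and there are two concrete failures.

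First, you slice at the $\varepsilon$-level: you pick out, for a.e.\ $\gamma$, the sets $U_\varepsilon(t)=\{u_\varepsilon(\cdot,t)>\gamma\}$ and claim uniform-in-$\varepsilon$ perimeter, velocity, and curvature bounds on their boundaries. But the coarea--Fubini selection of good $\gamma$'s for $u_\varepsilon$ produces a full-measure set that \emph{depends on $\varepsilon$}. You cannot simultaneously have ``for a.e.\ fixed $\gamma$'' and ``uniformly in $\varepsilon$'' without an extra argument, and a naive diagonalization destroys the a.e.\ statement. This is precisely why the paper does not slice the approximations: it first passes to the limit in an $x$-integrated, $|\nabla u|$-weighted form (Lemmas~\ref{lem:3.2}, \ref{lemma3.3}, Theorem~\ref{thm:3.4}), obtaining the motion law and energy inequality as statements about the single function $u$, and only then slices the \emph{limit} $u$ via coarea together with the relabeling property (Theorem~\ref{thm:3.8}). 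The relabeling trick --- replacing $u$ by $F(u)$ with $F'>0$, which by uniqueness of viscosity solutions is again the limit of the scheme with obstacles $F(\phi),F(\psi)$ --- is what allows you to strip off the weight $F'(\gamma)$ and get the statement for a.e.\ $\gamma$; your proposal does not mention it, and without it the final ``for a.e.\ $\gamma$'' quantifier is unsupported.

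Second, your item (iii) asserts that the scalar mean curvature of the level set is bounded in $L^1(\mathcal H^{d-1})$, and you derive it by coarea from \eqref{eq:strange_estimate_intro}. This is false: \eqref{eq:strange_estimate_intro} is a bound on $\int_\Omega|{-H_\varepsilon+f_\varepsilon}|\,dx$ with no factor $|\nabla u_\varepsilon|$, so it does not disintegrate over level sets. The paper is emphatic about exactly this point right after stating the estimate: it ``does not include the area factor $\sqrt{\varepsilon^2+|\nabla u_\varepsilon|^2}$ and thus does not correspond to an estimate on each individual level set.'' Its actual role is entirely different: it provides the uniformly bounded signed measures $d\mu_\varepsilon^t=(-H_\varepsilon+f_\varepsilon)\,dx$ that, combined with the convexity inequality \eqref{eq:13}, upgrade the $\Gamma$-liminf to the strict convergence $E_{\varepsilon_i}(u_{\varepsilon_i})\to E(u)$ (Theorem~\ref{thm:2.5}), and then, via a Young-measure argument, the crucial identity $|\nabla u_\varepsilon|\rightharpoonup|\nabla u|$ (Theorem~\ref{thm:2.7}). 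Relatedly, the strong normal convergence you invoke is an $L^2$ convergence with respect to Lebesgue measure on $\{|\nabla u|\neq0\}$ (Theorem~\ref{thm:2.9}), not a convergence of traces on $\partial^\ast U(t)$ in $\mathcal H^{d-1}$.

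Finally, the treatment of the obstacle forcing is too optimistic as stated. You posit that $V_\varepsilon'(u_\varepsilon)\,dx$ converges to a signed measure supported on the contact set whose sign ``translates precisely'' into the variational inequality. The paper instead proves the one-sided inequality directly, by integrating $-X\cdot\nabla u_\varepsilon\, f_\varepsilon$ by parts (writing $\nabla u_\varepsilon=\nabla(u_\varepsilon-\psi)+\nabla\psi$), using that $\int_\Omega V_\varepsilon(u_\varepsilon)\,dx\to0$ (a corollary of the strict convergence) to kill the $\divergence X$ term, and invoking the sign condition $X\cdot\nabla\psi\ge0$ on the enlarged contact set together with the localization Lemma~\ref{lemma2.6}. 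One also needs the non-degeneracy of $\nabla\phi,\nabla\psi$ away from finitely many critical points (the ``well-prepared'' condition in Definition~\ref{def:well-prepared}) in order to pass from vector fields that are admissible on a $\delta$-neighborhood of the contact set to those admissible only on the contact set itself (Theorem~\ref{thm:3.4}); your proposal does not use the well-preparedness for anything of that sort. In short: the ingredients are right, but the order ``slice, then pass to the limit'' must be reversed, \eqref{eq:strange_estimate_intro} must not be sliced, and the relabeling property and the non-degeneracy in Definition~\ref{def:well-prepared} are indispensable and missing from your plan.
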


The proof of our main result, Theorem~\ref{thm:BVsol}, hinges on the key estimate~\eqref{eq:strange_estimate_intro}, which is the basis for a compensated compactness argument that allows us to prove (an over the level sets averaged version of) the distributional equation \[
    \int_0^\infty \int_\Omega u\partial_t \zeta \,dx\,dt= - \int_0^\infty \int_\Omega \zeta V |\nabla u| \,dx\,dt
\]
for any $\zeta\in C_c^1((0,\infty)\times \Omega)$ and
\[
    \int_0^\infty \int_\Omega  V \nu \cdot X |\nabla u| \,dx\,dt \geq -\int_0^\infty\int_\Omega (I-\nu \otimes \nu) \colon \nabla X|\nabla u| \,dx\,dt
\]
for any $X\in C_c^1((0,\infty)\times \Omega;\mathbb{R}^d)$  satisfying the constraints
\[
    X\cdot \nabla \phi \leq 0 \quad \text{on }\{u=\phi\}
    \quad \text{and} \quad 
    X\cdot \nabla \psi \geq 0 \quad \text{on }\{u=\psi\}.
\]
Then the statement for almost every level set follows from the relabeling property, and the co-area and layer-cake formulas.

\section{Main definitions and notation}
For simplicity, we assume that $\Omega =\mathbb{T}^d$, i.e., we impose periodic boundary conditions. 
In the following, we introduce the main notation that will be used throughout the paper.

 Our notion of BV solution is the following natural extension of the solution concept of Luckhaus and Sturzenhecker~\cite{MR1386964}.
\begin{definition}[BV solution to mean curvature flow with obstacles]\label{def:BVsol}
Let $\Phi, \Psi \subset \Omega$ be two open sets with $C^1$ boundaries such that $\Phi \cap \Psi = \emptyset$. 
A family of sets of finite perimeter $(U_t)_{t\in (0,T)}$ depending measurably on $t$ is called a distributional solution to mean curvature flow with obstacles $\Phi $ and $\Psi$ and initial conditions $U_0$ if the following conditions hold:
\begin{enumerate}[(i)]
\item \emph{Constraints.}
$\Phi \subset U(t) \subset \Psi^c$ for almost every $t \in (0,\infty)$,
\item \emph{Velocity.}
For $\Gamma _t  :=\partial^\ast U_t$, it holds that
\begin{equation}\label{BVsol:1}
\mathrm{ess\, sup}_{t \in (0,\infty)} \mathcal{H}^{d-1} (\Gamma_t) <\infty \end{equation}
and there exists an $(\mathcal{H}^{d-1}\llcorner{\Gamma _t})\otimes dt$-measurable function $V$ with
\begin{equation}\label{BVsol:2}
\int_0 ^{\infty }
\int_{\Gamma_t } V^2 \, d\mathcal{H}^{d-1} dt<\infty
\end{equation}
such that for any $\zeta \in C^1 _c (\Omega \times [0,\infty))  $
we have
\begin{equation}\label{BVsol:3}
\int_0 ^{\infty} \int_{U_t}
\partial_t \zeta \, dxdt
= 
-\int_0 ^{\infty} \int_{\Gamma_t } \zeta V  \, d\mathcal{H}^{d-1} dt
- \int_{U_0 } \zeta (\cdot,0) \, dx.
\end{equation}
\item \emph{Energy dissipation relation.} For a.e.\ $T' \in (0,\infty)$ it holds
\begin{equation}\label{BVsol:4}
\mathcal{H}^{d-1} (\Gamma_{T'})
+
\int _{0} ^{T'} \int_{\Gamma_t } V^2 \, d \mathcal{H}^{d-1} dt \leq \mathcal{H}^{d-1} (\Gamma_0),
\end{equation}
where $\Gamma_0=\partial^\ast U_0$.
\item \emph{Motion law.} We have 
\begin{equation}\label{BVsol:5}
\int_0 ^{\infty}
\int_{\Gamma_t} 
\left\{
\left( I - \nu \otimes \nu \right) \colon
\nabla X
+ V\nu \cdot X
\right\}
\, d\mathcal{H}^{d-1}dt
\geq 0
\end{equation}
for any admissible test vector field $X \in C^1 (\Omega \times [0,\infty);\R^d)$, where $\nu =(\nu_1,\dots,\nu_d)$ denotes the measure theoretic (outer) unit normal of $U_t$.
Here, a vector field $X$ is called admissible if, for any  $t \in (0,\infty)$,
\begin{equation}\label{BVsol:6}
	X \cdot \nu_\Phi \geq 0 \quad \text{on } \overline{\Gamma_t} \cap \partial \Phi
	\quad
    \text{and}
	\quad
    X\cdot \nu_\Psi \geq 0 \quad \text{on }  \overline{\Gamma_t} \cap \partial \Psi,
\end{equation}

where $\nu_\Phi$ and $\nu_\Psi$ denote the exterior normal vectors to the obstacles $\Phi$ and $\Psi$, respectively.
    \end{enumerate}
\end{definition}

    Note that our weak solution concept in Definition~\ref{def:BVsol} encodes all relevant information of the evolution. Indeed, in case of the (expected) regularity $\partial U_t \in C^{1,1}$, we may integrate by parts in~\eqref{BVsol:5} to get 
    \[    \int_0^{\infty} \int_{\Gamma_t} ( H + V ) X \cdot 
    \nu\, d\mathcal{H}^{d-1}dt \geq 0
    \]
    for all test vector fields $X$ satisfying~\eqref{BVsol:6},
    which encodes $V =-H$ on $\Gamma_t\setminus (\partial \Phi \cup \partial \Psi)
    $, $V\geq -H$ on $\Gamma_t \cap \partial \Phi$, and $V \leq -H$ on $\Gamma_t \cap \partial \Psi$.

Next, we state the precise definition of well-preparedness for our initial data and obstacles.
\begin{definition}[Well-prepared data]\label{def:well-prepared}
    We call $\phi,\psi,g\colon \Omega \to \R$ \emph{well-prepared data} if $\phi,\psi \in C^1 (\Omega), g \in C^2 (\Omega)$,
    \[
        \phi<\psi 
        \quad \text{and} \quad
        \phi \leq g \leq \psi \quad \text{in }\Omega,
    \]
    and there exists $L, \ell<\infty$ such that
    \begin{equation}\label{lem3.4:assumption1}
        \max_{x \in \Omega} |g(x)| \leq L,
        \quad 
        \max_{x \in \Omega} \phi (x) \leq L,
        \quad 
        \text{and} 
        \quad
        \min_{x \in \Omega} \psi (x) \geq -L.
    \end{equation}
    and the sets of critical points
    \[
        \{ x \in \Omega \mid \phi (x) \geq - (L+l) \quad \text{and} \quad
    |\nabla \phi (x)|=0 \}
    \]
    and
    \[
    \{ x \in \Omega \mid \psi (x) \leq L+l \quad \text{and} \quad
    |\nabla \psi (x)|=0 \}
    \]
    are finite.
\end{definition}

\begin{remark}
The above assumptions for $\phi$ and $\psi$ are reasonable for our problem.
Let $U \subset (-\frac{1}{4},\frac{1}{4})^d$ be an open set. Assume that
there exists $C^1$-diffeomorphism $F : (-\frac{1}{2}, \frac{1}{2})^d \to (-\frac{1}{2}, \frac{1}{2})^d$ with 
$F (x)=x$ for any $x \in (-\frac{1}{2}, \frac{1}{2})^d\setminus (-\frac{1}{4}, \frac{1}{4})^d$
and $U=F(B_\frac18 (0))$. 
Then $U$ has $C^1$-boundary $\partial U$.
Let $\tilde \psi: [0, \infty) \to \R$
be a $C^1$ function such that $\tilde \psi (x)= \frac{128 L}{3} (s^2 - \frac{1}{64})$ for any $s \in [0,\frac14)$, $\tilde \psi (s)= 3L$ for any $s \geq \frac{3}{8}$, and $\tilde \psi' (s)>0$ for any $s \in (0,\frac38)$. Then the $C^1$ function $\psi (x) :=\tilde \psi (|F^{-1} (x)|)$ satisfies 
$\partial U =\{\psi=0\}$
and is well-prepared in the sense of Definition~\ref{def:well-prepared} with $l=L$, $\{ x \in \Omega \mid \psi (x) \leq L+l \ \text{and} \
    |\nabla \psi (x)|=0 \}=\{0\}$, by periodically extending the domain.
Indeed, if $\nabla |G(x)|^2=0$ with $G(x)=F^{-1} (x)$,
we have 
$G \frac{\partial G}{\partial x} =0$. 
Hence, if $\psi \leq 2L$, then the critical points above are uniquely determined, since $G$ is surjective and $\frac{\partial G}{\partial x}$ is a regular matrix.

Similarly, if $\{U^i _+ \}_{i=1} ^N$ and $\{ 
U^i _- \}_{i=1} ^M$ are  families of open sets with 
$\dist (U_\pm ^i,U_\pm ^j)>0$ if $i\not =j$, $\dist (U_+ ^i,U_- ^j)>0$ for any $i,j$, and there exists a family of
$C^1$-diffeomorphism $\{F_\pm ^i\}$
such that $F^i _\pm (x)=x$ for any $x \in (-\frac{1}{2}, \frac{1}{2})\setminus (-\frac{1}{4}, \frac{1}{4})$ and $U_\pm ^i = F_\pm ^i (B_\frac18 (0))$, one can find suitable $C^1$ functions $\phi$ and $\psi$ that satisfy $\{ \phi =0\}=\cup_{i=1} ^M \partial U_- ^i$, $\{ \psi =0\}=\cup_{i=1} ^N \partial U_+ ^i$, and are well-prepared in the sense of Definition~\ref{def:well-prepared}.
\end{remark}

For $\varepsilon >0$ and $p \in \R^d$, 
we define 
\begin{equation}
    |p|_\varepsilon = \sqrt{ \varepsilon ^2+|p|^2}.
\end{equation}
Let $g$ be a (smooth) function on $\Omega$ 
with 
\begin{equation}
    \phi (x) \leq g(x) \leq \psi (x) \qquad 
    \text{for all }  x \in \Omega
\end{equation}
and $u_\varepsilon$ be a classical solution of 
\begin{equation}
\left\{ 
\begin{array}{ll}
\partial_t u_\varepsilon 
=\left( \delta_{ij} - \dfrac{\partial_{x_i} u_\varepsilon \partial_{x_j} u_\varepsilon}{|\nabla u_\varepsilon |_\varepsilon^2} \right) \partial_{x_i x_j} u_\varepsilon
+ |\nabla u_\varepsilon|_\varepsilon f_\varepsilon ,& (x,t)\in \Omega \times (0, \infty),  \\
u_\varepsilon (x,0) = g (x) ,  &x\in \Omega,
\end{array} \right.
\label{mcf}
\end{equation}
where $f_\varepsilon = -V' _\varepsilon(u_\varepsilon)$. 
Here and throughout, we use Einstein's summation convention.

Set $C_0: =\sup_{\varepsilon \in (0,1)} E_\varepsilon (g)$. Then it holds that $C_0<\infty$ 
since $V_\varepsilon (g) \equiv 0$ and $\int _\Omega |\nabla g| \, dx <\infty$.
In addition, by~\eqref{eq:EDI}, we have
\begin{equation}
\label{eq:monotone}
E_\varepsilon (u_\varepsilon (\cdot,t)) \leq E_\varepsilon (g) \leq C_0
\quad
\text{for any} 
\ \varepsilon \in (0,1) \ \text{and} \ t\geq 0.
\end{equation}
Note that the solution $u_\varepsilon$ of \eqref{mcf} is the mean curvature flow of a graph in $(d+1)$ dimensions in a potential when $\varepsilon=1$ (see \cite{MR1025164} for the mean curvature flow of a graph without potential).

\bigskip

Set
\[
\nu_\varepsilon = 
(\nu_{\varepsilon,1}, \dots , \nu_{\varepsilon,d})
:=\frac{-\nabla u_\varepsilon }{ |\nabla u_\varepsilon|_\varepsilon }, \qquad
H_\varepsilon 
:=
\divergence \nu_\varepsilon
=
-\divergence\left( \frac{\nabla u_\varepsilon }{ |\nabla u_\varepsilon|_\varepsilon }\right)
=-\frac{1}{|\nabla u_\varepsilon|_\varepsilon}
a_{ij} ^\varepsilon \partial_{x_i x_j} u_\varepsilon, \quad 
f_\varepsilon := - V' _\varepsilon(u_\varepsilon),
\]
where the coefficients $a_{ij}^\varepsilon$ are the following entries of a positive semi-definite matrix
\[
a_{ij} ^\varepsilon 
:= 
\delta_{ij} - \nu_{\varepsilon,i} \nu_{\varepsilon,j}
=
\delta_{ij} - \frac{\partial_{x_i} u_\varepsilon \partial _{x_j} u_\varepsilon}{|\nabla u_\varepsilon|_\varepsilon ^2}
\]
for $i,j \in \{1,2,\dots, d\}$. 
With this notation, we can write~\eqref{mcf} in the more expressive form
\[
\frac{\partial_t u_\varepsilon}{|\nabla u_\varepsilon|_\varepsilon}
=-H_\varepsilon + f_\varepsilon.
\]

\section{Estimates, weak convergences}
The proofs of this section extend those in \cite{MR1315658} to the case of obstacle problems.
\subsection{$L^1$-estimate for the mean curvature with the forcing term}
We begin with the key estimate, which will allow us to carry out the desired compensated compactness argument.
\begin{theorem}\label{thm1.2}
    Assume that
    \begin{equation}
    \sup_{\varepsilon \in (0,1)}
    \int_{\Omega} |-H_\varepsilon (x,0) + f_\varepsilon (x,0)| \, dx <\infty.
    \end{equation}
    Then we have 
    \begin{equation}
    \sup_{\varepsilon \in (0,1),  t\geq 0}
    \int_{\Omega} |-H_\varepsilon (x,t) + f_\varepsilon (x,t)| \, dx <\infty.
    \end{equation}
\end{theorem}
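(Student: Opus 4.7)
\textbf{Proof plan for Theorem~\ref{thm1.2}.} Set $w_\varepsilon := -H_\varepsilon + f_\varepsilon$, so that the equation \eqref{mcf} can be rewritten as $\partial_t u_\varepsilon = |\nabla u_\varepsilon|_\varepsilon w_\varepsilon$. The object to control is therefore $\|w_\varepsilon(\cdot,t)\|_{L^1(\Omega)}$ uniformly in $t\geq 0$ and $\varepsilon\in(0,1)$. I would follow the Evans--Spruck scheme from~\cite{MR1315658}, promoted to the present obstacle setting. The extra term $f_\varepsilon = -V'_\varepsilon(u_\varepsilon)$ is the only novelty compared to~\cite{MR1315658}, and its derivative $\partial_t f_\varepsilon = -V''_\varepsilon(u_\varepsilon)|\nabla u_\varepsilon|_\varepsilon w_\varepsilon$ will enter the estimate with a favorable sign because $V''_\varepsilon\geq 0$ (this is precisely why the quartic penalization was chosen in~\cite{MR4335627}).

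\emph{Step 1: evolution equation for $w_\varepsilon$.} Differentiating~\eqref{mcf} in time, using $\partial_t u_\varepsilon = |\nabla u_\varepsilon|_\varepsilon w_\varepsilon$, and carefully computing $\partial_t a_{ij}^\varepsilon$, $\partial_t|\nabla u_\varepsilon|_\varepsilon$, and $\partial_t f_\varepsilon$, one obtains a linear parabolic equation of the form
\[
    \partial_t w_\varepsilon = a^\varepsilon_{ij}\partial_{x_i x_j} w_\varepsilon + b^\varepsilon_k \partial_{x_k} w_\varepsilon + c^\varepsilon\, w_\varepsilon - V''_\varepsilon(u_\varepsilon)\,|\nabla u_\varepsilon|_\varepsilon\, w_\varepsilon,
\]
where the drift $b^\varepsilon$ and the zeroth-order coefficient $c^\varepsilon$ are explicit algebraic expressions in $\nabla u_\varepsilon$, $\nabla^2 u_\varepsilon$, and $f_\varepsilon$ that carry the same structure as in~\cite{MR1315658}, plus terms coming from $f_\varepsilon$. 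The decisive point is the last summand, which is $-V''_\varepsilon(u_\varepsilon)|\nabla u_\varepsilon|_\varepsilon$ times $w_\varepsilon$: since $V''_\varepsilon\geq 0$, this is a damping term.

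\emph{Step 2: testing against a smooth sign.} For $\eta>0$ set $\beta_\eta(s):=\sqrt{\eta^2+s^2}-\eta$, a smooth even convex approximation of $|s|$ with $|\beta'_\eta|\leq 1$ and $\beta_\eta''\geq 0$. Multiply the equation for $w_\varepsilon$ by $\beta'_\eta(w_\varepsilon)$ and integrate over $\Omega$. Integrating the diffusion term by parts,
\[
    \int_\Omega \beta'_\eta(w_\varepsilon) a_{ij}^\varepsilon \partial_{x_i x_j} w_\varepsilon\, dx = -\int_\Omega \beta''_\eta(w_\varepsilon) a_{ij}^\varepsilon \partial_{x_i} w_\varepsilon \partial_{x_j} w_\varepsilon \, dx - \int_\Omega \beta'_\eta(w_\varepsilon) \partial_{x_i} a_{ij}^\varepsilon \partial_{x_j} w_\varepsilon\, dx,
\]
the first contribution is $\leq 0$ by positive semi-definiteness of $a^\varepsilon$ and convexity of $\beta_\eta$, and the second can be rewritten as $\int_\Omega \partial_{x_i x_j} a_{ij}^\varepsilon \,\beta_\eta(w_\varepsilon) dx$ after another integration by parts (periodic boundary). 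Similarly, $\int \beta'_\eta(w_\varepsilon) b^\varepsilon_k \partial_{x_k} w_\varepsilon \,dx = -\int(\partial_{x_k} b^\varepsilon_k)\beta_\eta(w_\varepsilon)\, dx$. Finally, the obstacle term $-\int\beta'_\eta(w_\varepsilon)V''_\varepsilon(u_\varepsilon)|\nabla u_\varepsilon|_\varepsilon w_\varepsilon\,dx$ is nonpositive since $s\beta'_\eta(s)\geq 0$ and $V''_\varepsilon\geq 0$, so it can simply be discarded.

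\emph{Step 3: Gronwall and passage to the limit.} Collecting the above yields
\[
    \frac{d}{dt}\int_\Omega \beta_\eta(w_\varepsilon)\,dx \leq \int_\Omega \bigl(\partial_{x_i x_j} a_{ij}^\varepsilon - \partial_{x_k} b^\varepsilon_k + c^\varepsilon\bigr)\beta_\eta(w_\varepsilon)\,dx + R_\eta(\varepsilon,t),
\]
where $R_\eta\to 0$ as $\eta\to 0$. The cancellations inherited from the Evans--Spruck computation arrange the coefficient in parentheses to be controlled pointwise by a universal constant (or at worst by a function whose $L^\infty_t L^\infty_x$ norm is bounded in $\varepsilon$). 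Letting $\eta\to 0$ and applying Gronwall to $t\mapsto \int_\Omega|w_\varepsilon(\cdot,t)|\,dx$ with the hypothesis at $t=0$ gives the claim.

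\emph{Main obstacle.} The bookkeeping of Step 1 is the delicate part: one must show that the extra terms introduced by $f_\varepsilon$ (beyond what appears in~\cite{MR1315658}) do not destroy the crucial cancellations that make the coefficient $\partial_{x_i x_j}a_{ij}^\varepsilon-\partial_{x_k}b^\varepsilon_k+c^\varepsilon$ pointwise bounded. The quartic choice of $V_\varepsilon$ is decisive here: it provides enough regularity of $f_\varepsilon$ to perform all derivatives, while the nonnegativity of $V''_\varepsilon$ ensures that the new obstacle contribution enters the estimate with the right sign and can therefore simply be dropped rather than estimated.
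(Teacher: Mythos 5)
Your overall plan is close to the paper's: test the evolution equation for $w_\varepsilon := -H_\varepsilon + f_\varepsilon$ against a convex approximation of $|\cdot|$, use the sign of $V''_\varepsilon\geq 0$ to drop the obstacle term, and let the approximation parameter tend to zero. The obstacle term is indeed discarded exactly as you say. But there is a genuine gap in the way you propose to close the argument, and it is worth spelling out because it changes the conclusion you would obtain.

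The issue is your Step~3: you claim that the collected coefficient $\partial_{x_ix_j}a_{ij}^\varepsilon - \partial_{x_k}b_k^\varepsilon + c^\varepsilon$ is ``controlled pointwise by a universal constant'' and then invoke Gronwall. Even if this were true, Gronwall with a bounded (but possibly positive) coefficient yields $\int_\Omega|w_\varepsilon(T)|\,dx \leq e^{CT}\int_\Omega|w_\varepsilon(0)|\,dx$, which grows exponentially in $T$ and therefore does \emph{not} give the stated $\sup_{t\geq 0}$ bound. A bounded coefficient is the wrong target. What is actually true---and what saves your computation---is that the coefficient vanishes identically. Indeed, your operator is the expansion in non-divergence form of
\[
\partial_{x_i}\!\left(\frac{a_{ij}^\varepsilon}{|\nabla u_\varepsilon|_\varepsilon}\partial_{x_j}\{|\nabla u_\varepsilon|_\varepsilon w_\varepsilon\}\right),
\]
so that $b_k^\varepsilon = \partial_{x_i}a_{ik}^\varepsilon + a_{kj}^\varepsilon\partial_{x_j}|\nabla u_\varepsilon|_\varepsilon / |\nabla u_\varepsilon|_\varepsilon$ and $c^\varepsilon = \partial_{x_i}(a_{ij}^\varepsilon\partial_{x_j}|\nabla u_\varepsilon|_\varepsilon / |\nabla u_\varepsilon|_\varepsilon)$; a direct relabeling of indices then shows $\partial_{x_ix_j}a_{ij}^\varepsilon - \partial_{x_k}b_k^\varepsilon + c^\varepsilon \equiv 0$. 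This is simply the statement that a pure divergence integrates to zero over $\mathbb T^d$. The coefficients are by no means uniformly bounded in $\varepsilon$ (they carry factors of $1/|\nabla u_\varepsilon|_\varepsilon^2\leq 1/\varepsilon^2$), so your fallback phrase ``bounded in $\varepsilon$'' would fail as well; only the exact cancellation rescues the argument.

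Two further comments. First, your error term $R_\eta$ has a concrete source you do not identify: the zeroth-order contribution produces $\int c^\varepsilon w_\varepsilon\beta'_\eta(w_\varepsilon)\,dx$, and $w\beta'_\eta(w) = \beta_\eta(w) + \gamma_\eta(w)$ with $\gamma_\eta(w) = \eta - \eta^2/\sqrt{\eta^2+w^2}\in[0,\eta]$, so $|R_\eta|\leq\eta\int_\Omega|c^\varepsilon|\,dx$; this vanishes as $\eta\to 0$ for each fixed $\varepsilon$ because $u_\varepsilon$ is smooth on the compact $\Omega$. Second, the paper's proof bypasses the issue entirely by keeping $-\partial_t H_\varepsilon$ in divergence form and integrating by parts only once. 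The resulting ``bad'' term still carries an explicit factor of $w_\varepsilon$ next to $\eta''(w_\varepsilon)$; choosing $\eta=\eta_\delta$ with $\spt\eta_\delta''\subset(-\delta,\delta)$ and $\eta_\delta''\leq C/\delta$ makes the product $\eta_\delta''(w_\varepsilon)|w_\varepsilon|$ bounded by $C$, and the term goes to zero as $\delta\to 0$ since $\nabla w_\varepsilon=0$ a.e.\ on $\{w_\varepsilon=0\}$. That route never requires computing or cancelling the non-divergence coefficients. Your route can also be completed, but only if you replace ``bounded coefficient plus Gronwall'' with the exact divergence-structure cancellation.
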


\begin{proof}
We compute
\begin{equation}
-\partial _t \nu_{\varepsilon,i}
=\frac{1}{|\nabla u_\varepsilon|_\varepsilon}
a^\varepsilon _{ij}
\partial_{t, x_j} u_\varepsilon
=
\frac{1}{|\nabla u_\varepsilon|_\varepsilon}
a^\varepsilon _{ij}
\partial_{x_j} \{ |\nabla u_\varepsilon|_\varepsilon (-H_\varepsilon + f_\varepsilon) \}
\end{equation}
and
\begin{equation}
-\partial _t H_{\varepsilon}
=-\partial_{x_i} (\partial _t \nu_{\varepsilon,i})
=\partial_{x_i}
\left(
\frac{1}{|\nabla u_\varepsilon|_\varepsilon}
a^\varepsilon _{ij}
\partial_{x_j} \{ |\nabla u_\varepsilon|_\varepsilon (-H_\varepsilon + f_\varepsilon) \}
\right).
\end{equation}
Let $\eta :\R\to [0,\infty)$ be a smooth convex 
function with $\eta (0)=0$.
Then we see that
\begin{equation}
\begin{split}
    \frac{d}{dt}
    \int_{\Omega} \eta (-H_\varepsilon + f_\varepsilon) \, dx
    &= \int_{\Omega} \eta' (-H_\varepsilon + f_\varepsilon) (-\partial _t H_\varepsilon + \partial _t f_\varepsilon) \, dx \\
    &=  
    \int_{\Omega} \eta' (-H_\varepsilon + f_\varepsilon) (-\partial _t H_\varepsilon)  \, dx
    +
        \int_{\Omega} \eta' (-H_\varepsilon + f_\varepsilon) \partial_t f _\varepsilon \, dx.
\end{split}
\end{equation}
Since $\partial_t f_\varepsilon = -V''_\varepsilon (u_\varepsilon)\partial_t u_\varepsilon=-V''_\varepsilon (u_\varepsilon)(-H_\varepsilon+f_\varepsilon)|\nabla u_\varepsilon|_\varepsilon$ and $V''(s)
=\frac{12}{\varepsilon} (s-\psi )_+^2 
+\frac{12}{\varepsilon} (\phi -s )_+^2 \geq 0$ 
and, since convexity of $\eta$ and $\eta(0)=0$ imply $\eta' (s) s \geq 0$, we have $\int_{\Omega} \eta' (-H_\varepsilon + f_\varepsilon)\partial _t f_\varepsilon \, dx \leq 0.$
Thus
\begin{equation*}
\begin{split}
    \frac{d}{dt}
    \int_{\Omega} \eta (-H_\varepsilon + f_\varepsilon) \, dx
  	\leq &\,
    \int_{\Omega} \eta' (-H_\varepsilon + f_\varepsilon) 
    \partial_{x_i}
    \left(
    \frac{1}{|\nabla u_\varepsilon|_\varepsilon}
    a^\varepsilon _{ij}
    \partial_{x_j} \{ |\nabla u_\varepsilon|_\varepsilon (-H_\varepsilon + f_\varepsilon) \}
    \right) \, dx \\
    = & \,
    - \int_{\Omega} \eta'' (-H_\varepsilon + f_\varepsilon) 
    \partial_{x_i}
    (-H_\varepsilon + f_\varepsilon)
    \left(
    \frac{1}{|\nabla u_\varepsilon|_\varepsilon}
    a^\varepsilon _{ij}
    \partial_{x_j} \{ |\nabla u_\varepsilon|_\varepsilon (-H_\varepsilon + f_\varepsilon) \}
    \right) \, dx\\
    = &\, 
    - \int_{\Omega} \eta'' (-H_\varepsilon + f_\varepsilon) 
    a^\varepsilon _{ij} 
    \partial_{x_i}
    (-H_\varepsilon + f_\varepsilon)
    \partial_{x_j}
    (-H_\varepsilon + f_\varepsilon) \, dx \\
    &
    - \int_{\Omega} 
    \eta'' (-H_\varepsilon + f_\varepsilon) 
    \partial_{x_i}
    (-H_\varepsilon + f_\varepsilon)
    \frac{a^\varepsilon _{ij}}{|\nabla u_\varepsilon|_\varepsilon}
   (-H_\varepsilon + f_\varepsilon) 
   \partial_{x_j} |\nabla u_\varepsilon|_\varepsilon 
    \, dx.
\end{split}
\end{equation*}

The first right-hand side integral is non-positive, again due to the convexity of $\eta$---now in form of $\eta'' (s) \geq 0$---, and the fact that
$(a^\varepsilon _{ij})_{ij}$ is positive definite.
Integration in time yields
\begin{equation}\label{eq8}
\begin{split}
    & \int_{\Omega} \eta (-H_\varepsilon + f_\varepsilon) \, dx\Big|_{t=T} \\
    \leq & \,
    \int_{\Omega} \eta (-H_\varepsilon + f_\varepsilon) \, dx\Big|_{t=0}
    + C_\varepsilon \int _0 ^T \int_{\Omega} 
    \eta'' (-H_\varepsilon + f_\varepsilon) 
    |\nabla (-H_\varepsilon + f_\varepsilon)|
    |-H_\varepsilon + f_\varepsilon| 
    \, dx,
\end{split}
\end{equation}
where 
$
C_\varepsilon =
\frac{1}{\varepsilon} \sup_{x,t} \big| \nabla |\nabla u_\varepsilon|_\varepsilon \big|
$.
For any $\delta>0$, we employ a smooth convex function $\eta_\delta :\R \to [0,\infty)$ such that
$\eta_\delta (0)=0$, $\eta_\delta (s) \to |s|$ 
uniformly as $\delta \to 0$, $0\leq \eta'' _\delta(s) \leq \frac{C}{\delta}$, and
$\spt \eta'' _\delta \subset (-\delta,\delta)$.
Substituting $\eta=\eta_\delta$ into \eqref{eq8},
we have
\begin{equation}
\begin{split}
    &\int_{\Omega} \eta_\delta (-H_\varepsilon + f_\varepsilon) \, dx\Big|_{t=T} \\
    \leq & \, 
    \int_{\Omega} \eta_\delta (-H_\varepsilon + f_\varepsilon) \, dx\Big|_{t=0}
    + \frac{C_\varepsilon C}{\delta} \int _0 ^T \int_{\{ |-H_\varepsilon +f_\varepsilon| \leq \delta \}}  
    |\nabla (-H_\varepsilon + f_\varepsilon)|
    |-H_\varepsilon + f_\varepsilon| 
    \, dx \\
    \leq & \,
    \int_{\Omega} \eta_\delta (-H_\varepsilon + f_\varepsilon) \, dx\Big|_{t=0}
    + C_\varepsilon C
    \int _0 ^T \int_{\{ |-H_\varepsilon +f_\varepsilon| \leq \delta \}}  
    |\nabla (-H_\varepsilon + f_\varepsilon)|
    \, dx.
\end{split}
\end{equation}
Taking the limit $\delta\downarrow0$, since $|\nabla (-H_\varepsilon + f_\varepsilon)| =0$
a.e.\ on $\{ |-H_\varepsilon +f_\varepsilon| =0 \}$, we obtain
\begin{equation}
\begin{split}
    \int_{\Omega} | -H_\varepsilon + f_\varepsilon| \, dx\Big|_{t=T}
    \leq  \, 
    \int_{\Omega} | -H_\varepsilon + f_\varepsilon| \, dx\Big|_{t=0},
\end{split}
\end{equation}
which is precisely our claim.
\end{proof}
For $u \in W ^{1,\infty} (\Omega)$, we define the limit energy
\[
E(u) :=
\begin{cases}
\displaystyle \int_{\Omega} |\nabla u| \, dx, &
\text{if} \ \ \{ x \in \Omega \mid u(x) > \psi (x) \ \text{or} \ u(x) < \phi (x) \} =\emptyset, \\
\qquad +\infty, & \text{otherwise.}    
\end{cases}
\]
Next, we show the simple fact that the approximate energies $E_\varepsilon$ converge to $E$.

\begin{theorem}\label{thm1.3}
Let $\{ \varepsilon _i \} _{i=1} ^\infty$ be a positive sequence with $\varepsilon _i \to 0$ as $i\to \infty$. Then
$E_{\varepsilon_i}$ $\Gamma$-converges to $E$ in the uniform topology.  
More precisely, the following hold:
\begin{enumerate}[(i)]
    \item \label{Gamma_(i)}  If 
    $u_{i} \to u $ (locally) uniformly on $ \Omega $ 
    as $ i\to \infty$ for
    $\{ u_{i} \}_{i=1} ^\infty \subset W^{1,\infty} (\Omega)$ and $u \in W^{1,\infty} (\Omega)$, then
    \[
    E(u) \leq \liminf_{i\to \infty} E_{\varepsilon_i} (u_i)
    \]
    \item \label{Gamma_(ii)} It holds that
    \[
    \limsup_{i \to \infty} E_{\varepsilon_i} (u)
    \leq E (u) \qquad \text{for any} \ u \in 
    W^{1,\infty}  (\Omega).
    \]
\end{enumerate}
\end{theorem}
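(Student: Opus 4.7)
The plan is to verify the two inequalities characterizing $\Gamma$-convergence separately. In both cases, the decisive structural distinction is whether the limit $u$ satisfies the obstacle constraint $\phi\leq u\leq \psi$ or not, and this dichotomy is controlled entirely by the penalization $V_{\varepsilon_i}$, which vanishes when the constraint is satisfied and blows up (due to the factor $1/\varepsilon_i$) whenever it is not.

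For the liminf inequality (i), I would first handle the case in which $u$ violates the constraint at some point, say $u(x_0)>\psi(x_0)$ (the case $u(x_0)<\phi(x_0)$ being symmetric). By continuity of $u$ and $\psi$ there exist $r,\delta>0$ with $u-\psi\geq 2\delta$ on the ball $B_r(x_0)$, and the (locally) uniform convergence $u_i\to u$ on the compact torus $\mathbb{T}^d$ then forces $(u_i-\psi)_+\geq \delta$ on $B_r(x_0)$ for all $i$ large, so that
\[
E_{\varepsilon_i}(u_i)\geq \int_{B_r(x_0)}\frac{(u_i-\psi)_+^4}{\varepsilon_i}\,dx\geq \frac{\delta^4\,|B_r(x_0)|}{\varepsilon_i}\longrightarrow \infty = E(u).
\]
In the complementary case $\phi\leq u\leq \psi$, the statement reduces to the pointwise bound $\sqrt{\varepsilon_i^2+|\nabla u_i|^2}\geq |\nabla u_i|$ combined with the classical lower semicontinuity of the total variation: uniform convergence on the compact set $\mathbb{T}^d$ implies $L^1$ convergence, hence $\int_\Omega|\nabla u|\,dx\leq \liminf_{i\to\infty}\int_\Omega|\nabla u_i|\,dx\leq \liminf_{i\to\infty}E_{\varepsilon_i}(u_i)$.

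For the limsup inequality (ii), there is nothing to prove when $E(u)=+\infty$, so assume $\phi\leq u\leq\psi$. Then the potential vanishes, $V_{\varepsilon_i}(u)\equiv 0$, so the constant recovery sequence $u_i\equiv u$ suffices: using the elementary bound $\sqrt{\varepsilon_i^2+|\nabla u|^2}\leq \varepsilon_i+|\nabla u|$ (where $|\nabla u|\in L^\infty$ since $u\in W^{1,\infty}$) together with pointwise convergence to $|\nabla u|$, dominated convergence gives $E_{\varepsilon_i}(u)\to \int_\Omega|\nabla u|\,dx=E(u)$.

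This is a routine $\Gamma$-convergence result and I do not anticipate any genuine obstacle; the only point worth emphasizing is that part (i) does \emph{not} require the $u_i$ to satisfy the constraint themselves. This is essential when the result is later applied to the vanishing-viscosity approximations $u_\varepsilon$, which only satisfy the obstacle constraint asymptotically, precisely through the blow-up mechanism exhibited in the first case of the liminf argument.
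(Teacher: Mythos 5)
Your proof is correct and follows essentially the same route as the paper: lower semicontinuity of total variation plus $\sqrt{\varepsilon^2+|p|^2}\geq |p|$ for the liminf on the surface part, blow-up of the $1/\varepsilon$ penalty under constraint violation for the $E(u)=+\infty$ case, and the constant recovery sequence with $\sqrt{\varepsilon^2+|p|^2}\leq \varepsilon+|p|$ for the limsup. The only cosmetic difference is that the paper organizes the liminf case by first assuming $\sup_i\int V_{\varepsilon_i}(u_i)\,dx<\infty$ and deducing the constraint on $u$, whereas you argue the contrapositive; the content is identical.
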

\begin{proof}
    \emph{Argument for~\eqref{Gamma_(i)}:}  Assume that $u_{i} \to u $ (locally) uniformly on $ \Omega $ 
    as $ i\to \infty$ for
    $\{ u_{i} \}_{i=1} ^\infty \subset W^{1,\infty} (\Omega)$ and $u \in W^{1,\infty} (\Omega)$.
    By the lower semi-continuity of BV functions, we have
    \[
    \int _{\Omega} |\nabla u| \, dx \leq 
    \liminf_{i\to \infty} \int_{\Omega} |\nabla u_i| \, dx 
    \leq 
    \liminf_{i\to \infty} \int_{\Omega} |\nabla u_i|_{\varepsilon _i} \, dx.
    \]    
    Without loss of generality, we may assume $\sup_{i \in \mathbb{N}} \int_{\Omega} V_{\varepsilon_i} (u_i) \, dx <\infty$. Then
    one can easily check that
    \[
    \{ x \in \Omega \mid u(x) > \psi (x) \ \text{or} \ u(x) < \phi (x) \}=\emptyset.
    \]
    Therefore $E (u) =\int_{\Omega} |\nabla u| \, dx$ and
    $E(u) \leq \liminf_{i\to \infty} E_{\varepsilon_i} (u_i)$.
    Similarly, if $u\in W^{1,\infty}(\Omega)$ with $E(u)=\infty$, then necessarily $\lim_{i\to \infty} E_{\varepsilon_i} (u_i)= \infty$.
    
    \smallskip
	
	\emph{Argument for~\eqref{Gamma_(ii)}:} For any $u \in W^{1,\infty} (\Omega)$,
    we may assume that $E(u) <\infty$.
    Then $\phi (x) \leq u (x) \leq \psi (x)$ for any $x \in \Omega$. Therefore $V_{\varepsilon_i} (u (x))=0$ for any $x \in \Omega$ and
    \[
    \limsup_{i\to\infty} E_{\varepsilon_i} (u)
    =
    \limsup_{i\to\infty}
    \int_\Omega |\nabla u|_{\varepsilon_i} + V_{\varepsilon_i} (u) \, dx
    \leq
    \limsup_{i\to\infty}
    \int_\Omega |\nabla u| + \varepsilon \, dx
    =E(u),
    \]
    where we used $|\Omega|=1$.
\end{proof}

The following proposition states the basic a priori estimates for our approximations $u_\varepsilon$. We define $\Omega _T=\Omega \times [0,T]$ for any $T>0$.
\begin{proposition}\label{prop:mp}
For any $T>0$ we have the following a priori estimates.
\begin{enumerate}[(i)]
    \item \label{item:mp1}
    Uniform bounds:
    \[
    \min \{  
    \min_{x \in \Omega} \psi (x), \min_{x \in \Omega } g (x) \}
    \leq
    \min _{(x,t) \in \Omega_T } u_\varepsilon (x,t) \leq 
    \max _{(x,t) \in \Omega_T } u_\varepsilon (x,t)
    \leq \max \{  
    \max_{x \in \Omega} \phi (x), \max_{x \in \Omega } g (x) \}.
    \]
    \item\label{item:mp2}
    Gradient bound:
    \[
    \max _{(x,t) \in \Omega_T } 
    | \nabla u_\varepsilon (x,t)|
    \leq \max \{  
    \max_{x \in \Omega} |\nabla \phi (x)|,
    \max_{x \in \Omega} |\nabla \psi (x)|,    
    \max_{x \in \Omega } |\nabla g (x)| \}.
    \]
    \item\label{item:mp3}
    Bound on time derivative:
    \[
    \max _{(x,t) \in \Omega_T } 
    | \partial_t u_\varepsilon (x,t)|
    \leq
    \max_{x \in \Omega } |\nabla ^2 g (x)|.
    \]
\end{enumerate}

\end{proposition}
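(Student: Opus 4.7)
All three estimates are instances of the parabolic maximum principle applied to a suitable auxiliary quantity derived from~\eqref{mcf}: for~(i) to $u_\varepsilon$ itself, for~(ii) to $w := |\nabla u_\varepsilon|^2$ (after differentiating~\eqref{mcf} once in space), and for~(iii) to $v := \partial_t u_\varepsilon$ (after differentiating once in time). The main obstacle is~(ii); parts~(i) and~(iii) are essentially routine.

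For \emph{part~(i)}, set $M := \max\{\max_\Omega \phi, \max_\Omega g\}$ and consider the auxiliary function $u_\varepsilon - M - \delta t$ for small $\delta > 0$. If it first reaches $0$ at an interior point $(x_0, t_0)$ with $t_0 > 0$, then $\nabla u_\varepsilon(x_0, t_0) = 0$, $D^2 u_\varepsilon(x_0, t_0) \leq 0$, $\partial_t u_\varepsilon(x_0, t_0) \geq \delta$, and $u_\varepsilon(x_0, t_0) > \max_\Omega \phi \geq \phi(x_0)$, so $(\phi - u_\varepsilon)_+ = 0$ there and hence $f_\varepsilon = -\tfrac{4}{\varepsilon}(u_\varepsilon - \psi)_+^3 \leq 0$. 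Equation~\eqref{mcf} then forces $\delta \leq a_{ij}^\varepsilon \partial_{ij} u_\varepsilon + \varepsilon f_\varepsilon \leq 0$, a contradiction. Sending $\delta \downarrow 0$ yields the upper bound; the lower bound is symmetric.

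For \emph{part~(ii)}, differentiating~\eqref{mcf} in $x_k$, multiplying by $\partial_{x_k} u_\varepsilon$, and summing over $k$ produces an equation of the schematic form
\[
\partial_t w = a_{ij}^\varepsilon \partial_{ij} w + B \cdot \nabla w - 2 a_{ij}^\varepsilon \partial_{ik} u_\varepsilon \, \partial_{jk} u_\varepsilon + 2 |\nabla u_\varepsilon|_\varepsilon \, \nabla u_\varepsilon \cdot \nabla f_\varepsilon.
\]
Writing $p := \nabla u_\varepsilon$ and computing $\nabla f_\varepsilon$ explicitly, one finds
\[
p \cdot \nabla f_\varepsilon = \tfrac{12}{\varepsilon}(\phi - u_\varepsilon)_+^2 \bigl(p \cdot \nabla \phi - |p|^2\bigr) + \tfrac{12}{\varepsilon}(u_\varepsilon - \psi)_+^2 \bigl(p \cdot \nabla \psi - |p|^2\bigr),
\]
which is $\leq 0$ as soon as $|p| \geq \max(\max_\Omega|\nabla \phi|, \max_\Omega|\nabla \psi|)$ by Cauchy--Schwarz. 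This is the crucial observation: although the penalty pulls $u_\varepsilon$ back toward the band $[\phi, \psi]$, it cannot inflate the gradient beyond that of the obstacles. Combined with a $\delta t$-perturbation of $w$ and the standard analysis at a space-time maximum (where $\nabla w = 0$, $a_{ij}^\varepsilon \partial_{ij} w \leq 0$, and $a_{ij}^\varepsilon \partial_{ik} u_\varepsilon \partial_{jk} u_\varepsilon \geq 0$ by positive semi-definiteness of $a^\varepsilon$), this rules out any interior maximum at which $|p|$ exceeds $\max(\max_\Omega|\nabla g|, \max_\Omega|\nabla \phi|, \max_\Omega|\nabla \psi|)$.

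For \emph{part~(iii)}, at $t = 0$ the hypothesis $\phi \leq g \leq \psi$ forces $V'_\varepsilon(g) \equiv 0$, so~\eqref{mcf} reduces to $\partial_t u_\varepsilon(\cdot, 0) = a_{ij}^\varepsilon[\nabla g] \, \partial_{ij} g$, which is bounded in absolute value by $\max_\Omega |\nabla^2 g|$ since the eigenvalues of $a^\varepsilon$ lie in $[0, 1]$. Differentiating~\eqref{mcf} in $t$ and using $\partial_t f_\varepsilon = -V''_\varepsilon(u_\varepsilon) \, v$, one obtains a linear parabolic equation
\[
\partial_t v = a_{ij}^\varepsilon \partial_{ij} v + B' \cdot \nabla v - V''_\varepsilon(u_\varepsilon) \, |\nabla u_\varepsilon|_\varepsilon \, v
\]
whose zeroth-order coefficient is $\leq 0$ by convexity of $V_\varepsilon$; the standard maximum principle then yields $\max_{\Omega_T} |v| \leq \max_\Omega |v(\cdot, 0)| \leq \max_\Omega |\nabla^2 g|$.
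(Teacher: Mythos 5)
Your proof is correct. Parts~(i) and~(iii) follow essentially the same strategy as the paper: in~(i) both arguments hinge on the observation that $f_\varepsilon \leq 0$ wherever $u_\varepsilon > \phi$ (so the penalty cannot push $u_\varepsilon$ upward), the paper phrasing it as the maximum principle on the excess set $\Omega_T^+ = \{u_\varepsilon > \phi\}$ while you run a first-touching contradiction with the perturbation $-\delta t$; in~(iii) both arguments differentiate~\eqref{mcf} in $t$ and exploit that $\partial_t f_\varepsilon = -V''_\varepsilon(u_\varepsilon)\,\partial_t u_\varepsilon$ supplies a zeroth-order term with the good sign.

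Part~(ii) is where you genuinely depart from the paper. The paper bounds each directional derivative $\partial_{x_k} u_\varepsilon$ directly: it introduces the set $\Omega_T^k := \Omega_T \cap \{\partial_{x_k} u_\varepsilon > \max\{|\partial_{x_k}\phi|, |\partial_{x_k}\psi|\}\}$, verifies $\partial_{x_k} f_\varepsilon \leq 0$ there, obtains a linear parabolic inequality for $\partial_{x_k} u_\varepsilon$ on $\Omega_T^k$, and invokes the maximum principle (to get the stated constant for $|\nabla u_\varepsilon|$ rather than a dimension-dependent one, one should really run the paper's argument for $e\cdot\nabla u_\varepsilon$ over arbitrary unit directions $e$, but that is a cosmetic point). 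You instead run a Bernstein argument on $w = |\nabla u_\varepsilon|^2$, producing a single scalar inequality for $w$ and reading off the gradient bound directly. The decisive observation in your version is the sign of
\[
p\cdot\nabla f_\varepsilon = \tfrac{12}{\varepsilon}(\phi-u_\varepsilon)_+^2\bigl(p\cdot\nabla\phi - |p|^2\bigr) + \tfrac{12}{\varepsilon}(u_\varepsilon-\psi)_+^2\bigl(p\cdot\nabla\psi - |p|^2\bigr) \leq 0
\]
once $|p| \geq \max(|\nabla\phi|, |\nabla\psi|)$, which by Cauchy--Schwarz encodes the same mechanism the paper exploits componentwise. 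The paper's route avoids the full Bernstein bookkeeping (the terms coming from $\partial_k a_{ij}^\varepsilon$ and $\partial_k|\nabla u_\varepsilon|_\varepsilon$, which your ``schematic form'' glosses over and which one must check either vanish at a spatial maximum of $w$ or have the good sign), whereas your route gives the bound on $|\nabla u_\varepsilon|$ in one shot with the stated constant and no directional sweep. Both are sound; yours is the more standard Bernstein template, the paper's is more elementary in its algebra.
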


\begin{proof}
  \emph{Argument for~\eqref{item:mp1}.} We define the excess sets
\[
\Omega _T ^+ :=\{(x,t) \in \Omega_T \mid
 u(x,t) > \phi (x) \},
 \qquad 
 \Gamma ^+ := \partial \Omega_ T ^+ \setminus 
 \{ (x,t) \in \Omega \times [0,\infty) \mid t=T \}.
\]
Then $u_\varepsilon$ is the sub-solution of the standard level set flow equation on $\Omega_ T ^+$, that is,
\[
\frac{\partial _ t u_\varepsilon}{|\nabla u_\varepsilon|_\varepsilon} \leq
\divergence \left( \frac{\nabla u_\varepsilon}{|\nabla u_\varepsilon|_\varepsilon} \right)
\qquad \text{on} \ \Omega _T ^+.
\]
Then the standard maximum principle implies
\begin{equation}\label{mp}
    \max _{(x,t) \in \overline{\Omega_T ^+}} u_\varepsilon (x,t)
=
\max_{(x,t) \in \Gamma ^+} u_\varepsilon (x,t)
\leq \max \{ \max_{x \in \Omega} \phi (x), \max_{x \in \Omega } g (x) \}.
\end{equation}
In addition, $u_\varepsilon (x,t) \leq \phi (x)$ for any $(x,t) \not \in \Omega _T ^+$. By this and \eqref{mp} we obtain the second inequality.
The rest of the proof can be shown in the same way.

\bigskip

\emph{Argument for~\eqref{item:mp2}.} Let $k \in \{1,2,\dots,d\}$ and
$\Omega _T ^k := \Omega _T \cap \{(x,t) \mid \partial_{x_k} u_\varepsilon (x,t) > \max\{ |\partial _{x_k} \psi (x)|, |\partial _{x_k} \phi (x)| \} \}$.
We compute
\[
\partial_{x_k} f_\varepsilon 
= \frac{12}{\varepsilon}
\{
- ((u-\psi)_+) ^2 \partial _{x_k}( u_\varepsilon - \psi )
+
((\phi - u_\varepsilon)_+) ^2 \partial _{x_k}( \phi -u_\varepsilon)
\}.
\]
Note that $\partial_{x_k} f_\varepsilon  \leq 0$ on $\Omega_ T ^k$.
Therefore we have
\begin{equation}\label{mp2}
\begin{split}
\partial_t \partial_{x_k} u_\varepsilon
= & \,
\partial _{x_k} \left( 
|\nabla u_\varepsilon|_\varepsilon \divergence \left( \frac{\nabla u_\varepsilon}{|\nabla u_\varepsilon|_\varepsilon} \right)
\right)
+ \partial_{x_k} (|\nabla u_\varepsilon |_\varepsilon f_\varepsilon )\\
\leq & \,  
\partial _{x_k} \left( 
|\nabla u_\varepsilon|_\varepsilon
\divergence \left( \frac{\nabla u_\varepsilon}{|\nabla u_\varepsilon|_\varepsilon} \right)
\right)
+  
f_\varepsilon
\frac{\nabla u_\varepsilon \cdot \nabla 
(\partial_{x_k} u_\varepsilon) }{|\nabla u_\varepsilon |_\varepsilon} 
\end{split}
\end{equation}
holds on $\Omega_T ^k$. 
We compute
\begin{equation}\label{mp3}
    \begin{split}
    &\partial _{x_k} \left( 
    |\nabla u_\varepsilon|_\varepsilon    \divergence \left( \frac{\nabla u_\varepsilon}{|\nabla u_\varepsilon|_\varepsilon} \right)
    \right)
    =
    \partial _{x_k}
    \left\{
    \left(
    \delta_{ij} - \dfrac{\partial_{x_i} u_\varepsilon \partial_{x_j} u_\varepsilon}{|\nabla u_\varepsilon |^2 +\varepsilon ^2}
    \right)
    \partial_{x_i x_j} u_\varepsilon
    \right\}\\
    = & \, 
    a_{ij} ^\varepsilon \partial_{x_i x_j } (\partial_{x_k} u_\varepsilon)
    -\frac{2 \partial_{x_i x_j} u_\varepsilon}{(|\nabla u_\varepsilon|^2 +\varepsilon^2)^2}
    \left\{
    (|\nabla u_\varepsilon|^2 +\varepsilon^2)
    \partial_{x_i} u_\varepsilon \partial_{x_j} (\partial_{x_k} u_\varepsilon)
    - \partial_{x_i} u_\varepsilon \partial_{x_j} u_\varepsilon \nabla u_\varepsilon \cdot 
    \nabla (\partial_{x_k} u_\varepsilon)
    \right\}.
    \end{split}
\end{equation}
By \eqref{mp2} and \eqref{mp3}, 
\[
\partial_t (\partial_{x_k} u_\varepsilon)
\leq 
a_{ij} ^\varepsilon \partial_{x_i x_j } (\partial_{x_k} u_\varepsilon)
+ B \cdot \nabla (\partial_{x_k} u_\varepsilon)
\qquad \text{on} \ \Omega_T ^k,
\]
for some vector-field $B$.
By the definition of $\Omega_T ^k$,
$ \partial_{x_k} u_\varepsilon (x,t)
\leq \max_{x \in \Omega}\{ |\partial _{x_k} \psi |, |\partial _{x_k} \phi |, |\partial _{x_k} g | \}$ on the parabolic boundary of $\Omega _T ^k$.
Hence $\partial _{x_k } u_\varepsilon (x,t)
\leq \max_{ \Omega}\{ |\partial _{x_k} \psi|, |\partial _{x_k} \phi|, |\partial _{x_k} g| \}$ on $\overline{\Omega_T ^k}$, by the maximum principle. Note that this inequality also holds on $\overline{\Omega_T }$
since $\partial _{x_k} u_\varepsilon (x,t) \leq \max_{x \in \Omega} \{ |\partial_{x_k} \psi |, |\partial_{x_k} \phi |\}$ if $(x,t) \not \in \Omega_T ^k$.
Repeating the same argument, we obtain~\eqref{item:mp2}.
Since $\partial_t \phi =\partial _t \psi \equiv 0$, we obtain~\eqref{item:mp3} similarly.
\end{proof}

For the convenience of the reader we briefly check the convexity of our integrand.
\begin{lemma}\label{lemma:convexity}
Set $L(p,z,x) = \sqrt{|p|^2 +\varepsilon^2} + 
\frac{1}{\varepsilon} ((z-\psi (x))_+)^4
+ 
\frac{1}{\varepsilon} ((\phi (x) -z)_+)^4
$ for $p \in \R^d$ and $x,z \in \R$.
Then the mapping $(p,z) \mapsto L(p,z,x)$ is convex, that is,
\[
L(p,z,x) + \nabla _p L (p,z,x) \cdot (q-p)
+ \partial_z L (p,z,x) (w-z) \leq L (q,w,x).
\]
\end{lemma}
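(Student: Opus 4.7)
The plan is to split $L$ as a sum of three terms, each depending on a subset of the variables, and prove convexity of each piece separately; then the claimed inequality is simply the first-order characterization of convexity for a $C^1$ function. Write $L(p,z,x) = L_1(p) + L_2(z,x) + L_3(z,x)$ where $L_1(p)=\sqrt{|p|^2+\varepsilon^2}$, $L_2(z,x)=\tfrac{1}{\varepsilon}((z-\psi(x))_+)^4$ and $L_3(z,x)=\tfrac{1}{\varepsilon}((\phi(x)-z)_+)^4$. Since $x$ is a parameter, convexity in $(p,z)$ of each summand suffices.

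For $L_1$, I would compute the Hessian and get
\[
    \nabla^2_p L_1(p) = \frac{1}{|p|_\varepsilon^3}\bigl(\varepsilon^2 I + (|p|^2 I - p\otimes p)\bigr),
\]
which is positive semi-definite because, for any $\xi\in\R^d$, Cauchy--Schwarz gives $|p|^2|\xi|^2 - (p\cdot\xi)^2\geq 0$, so $L_1$ is convex on $\R^d$. For $L_2$ and $L_3$, I would check that the scalar function $h(t):=(t_+)^4$ is convex and $C^1$ on $\R$: one has $h'(t)=4(t_+)^3$, which is continuous and non-decreasing, hence $h$ is $C^1$ with non-decreasing derivative, i.e.\ convex. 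Since $L_2$ and $L_3$ are obtained by composing $h$ with the affine maps $z\mapsto z-\psi(x)$ and $z\mapsto \phi(x)-z$ (with $x$ fixed), both are convex and $C^1$ in $z$.

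Summing, $L(\cdot,\cdot,x)$ is convex and $C^1$ on $\R^d\times\R$, with gradient
\[
    \nabla_p L(p,z,x) = \frac{p}{|p|_\varepsilon},
    \qquad
    \partial_z L(p,z,x)= \frac{4}{\varepsilon}\bigl((z-\psi(x))_+\bigr)^3 - \frac{4}{\varepsilon}\bigl((\phi(x)-z)_+\bigr)^3.
\]
The stated inequality is then the standard supporting-hyperplane inequality for a $C^1$ convex function, applied at the point $(p,z)$ with test point $(q,w)$. There is no real obstacle here; the only mildly subtle point is the differentiability of $(t_+)^4$ at $t=0$, which is immediate since both one-sided derivatives vanish there.
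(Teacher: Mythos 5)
Your proof is correct and follows essentially the same route as the paper's: both reduce the claim to checking that the Hessian in $(p,z)$ is block diagonal with positive semi-definite blocks, where the $p$-block is the Hessian of $|p|_\varepsilon$ and the $z$-block comes from the quartic penalty. The paper computes $\partial_z^2 L = \frac{12}{\varepsilon}\{((z-\psi)_+)^2 + ((\phi-z)_+)^2\}\geq 0$ directly where you instead compose the convex $C^1$ scalar $t\mapsto (t_+)^4$ with affine maps, but this is a cosmetic rather than a substantive difference.
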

\begin{proof}
    $D^2 _p L= \frac{1}{\sqrt{|p|^2 +\varepsilon^2}}( \delta_{ij} - q_i q_j )_{ij} $, where $q=(q_1,\dots,q_d)=\frac{p}{\sqrt{|p|^2+\varepsilon^2}}$.
    Thus, $(D^2 _p L) z = \frac{1}{\sqrt{|p|^2 +\varepsilon^2}} (z - (z\cdot q)q)$ for any $z \in \R^d$. Hence, any eigenvector has a positive eigenvalue. In addition, 
    $\partial _z ^2 L= \frac{12}{\varepsilon}
    \{ ((z-\psi (x))_+)^2+ 
    ((\phi (x) -z)_+)^2 \geq 0
    $ and $\partial_{p_i, z} L= 0$, any eigenvalue of $D^2 _{p,z} L$ is nonnegative.
    Hence, the mapping is convex.
\end{proof}

The following crucial result strengthens the convergence of our approximation of the levelset solution and states that their energy converges to the energy of the limit. In other words, for each time, our approximations form a recovery sequence for their limit in the $\Gamma$-convergence.
The result hinges on the central estimate from Theorem~\ref{thm1.2}, and on the convexity of our energy from Lemma~\ref{lemma:convexity}.

\begin{theorem}\label{thm:2.5}
Let $u_\varepsilon$ be the solution of~\eqref{mcf} and $u$ be the viscosity solution of~\eqref{eq:levelset_constraint}--
\eqref{eq:levelset_contact}. Then there exist a subsequence $\varepsilon_i \to 0$
(denoted by same index) and $u \in W^{1,\infty} (\Omega \times [0,T])$ such that
\[
\nabla u_{\varepsilon_i} \rightharpoonup \nabla u \qquad \text{weakly-}\ast \quad \text{in} \ L^\infty (\Omega \times (0,T);\R^d),
\]
\[
u_{\varepsilon _i} \to u \quad \text{uniformly on} \ \Omega\times [0,T], \quad
\lim_{i\to \infty} E_{\varepsilon_i} (u_{\varepsilon_i}) =E (u), \quad \forall\, t\geq 0,
\]
and
\[
\phi (x) \leq u(x,t) \leq \psi (x) \qquad \text{for any} \ (x,t) \in \Omega \times [0,T].
\]
\end{theorem}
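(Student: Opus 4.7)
The plan is to combine the standard compactness from Proposition~\ref{prop:mp} and Theorem~\ref{thm:viscositysol}, which already give the uniform and weak-$\ast$ convergences and the existence of $u$ in $W^{1,\infty}(\Omega\times[0,T])$, with the vanishing of the penalty to verify the pointwise constraints, and finally to use the convexity of Lemma~\ref{lemma:convexity} paired with the key $L^1$-estimate of Theorem~\ref{thm1.2} to obtain the genuinely new content of the statement, namely the convergence of the energies at every $t\geq 0$.

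The uniform convergence on $\Omega\times[0,T]$ is Theorem~\ref{thm:viscositysol}. Proposition~\ref{prop:mp} yields a uniform bound on $\|u_\varepsilon\|_{W^{1,\infty}(\Omega\times[0,T])}$, so after extracting a subsequence we obtain $\nabla u_{\varepsilon_i}\rightharpoonup \nabla u$ weakly-$\ast$ in $L^\infty(\Omega\times(0,T);\R^d)$ and $u \in W^{1,\infty}(\Omega\times[0,T])$. The energy bound $E_\varepsilon(u_\varepsilon(\cdot,t))\leq C_0$ from~\eqref{eq:monotone} gives
\[
    \int_\Omega (u_\varepsilon - \psi)_+^4 + (\phi - u_\varepsilon)_+^4 \,dx \leq C_0\varepsilon \to 0,
\]
so that, by uniform convergence, $\phi\leq u\leq\psi$ pointwise on $\Omega\times[0,T]$.

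It remains to prove, at each fixed $t\geq 0$, the energy convergence $E_{\varepsilon_i}(u_{\varepsilon_i}(\cdot,t))\to E(u(\cdot,t))$. The inequality $E(u(\cdot,t))\leq \liminf_i E_{\varepsilon_i}(u_{\varepsilon_i}(\cdot,t))$ is the $\Gamma$-liminf part of Theorem~\ref{thm1.3}. For the reverse bound, I would test the convexity of Lemma~\ref{lemma:convexity} at $(\nabla u_{\varepsilon_i}(\cdot,t), u_{\varepsilon_i}(\cdot,t))$ against $(\nabla u(\cdot,t), u(\cdot,t))$, integrate over $\Omega$, and integrate by parts using the periodic boundary conditions. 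Collecting the first-order term as the $L^2$-pairing with the first variation $\delta E_{\varepsilon_i}(u_{\varepsilon_i}) = H_{\varepsilon_i} - f_{\varepsilon_i}$ gives
\[
    E_{\varepsilon_i}(u_{\varepsilon_i}(\cdot,t)) \leq E_{\varepsilon_i}(u(\cdot,t)) + \int_\Omega (u_{\varepsilon_i}-u)(H_{\varepsilon_i}-f_{\varepsilon_i})(\cdot,t)\,dx.
\]
The last integral is controlled by $\|u_{\varepsilon_i}(\cdot,t) - u(\cdot,t)\|_{L^\infty(\Omega)}\int_\Omega |{-H_{\varepsilon_i}+f_{\varepsilon_i}}|(\cdot,t)\,dx$: the first factor vanishes by uniform convergence, while the second is uniformly bounded by the key Theorem~\ref{thm1.2}. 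Applying the $\Gamma$-limsup part of Theorem~\ref{thm1.3} to the fixed admissible function $u(\cdot,t)$ (for which $V_\varepsilon(u(\cdot,t))\equiv 0$ by the constraints already verified) then yields $\limsup_i E_{\varepsilon_i}(u_{\varepsilon_i}(\cdot,t))\leq E(u(\cdot,t))$, closing the argument.

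The main difficulty is precisely this pointwise-in-time energy convergence. The gradient-flow identity~\eqref{eq:EDI} only delivers time-integrated dissipation and is by itself insufficient to pass to the limit in the energy at individual times. It is the $L^1$-bound on the first variation in Theorem~\ref{thm1.2}, combined with the convexity of the integrand, that permits the substitution of $u_{\varepsilon_i}$ by its uniform limit $u$ inside $E_{\varepsilon_i}$ at every $t$, upgrading the one-sided $\Gamma$-liminf to an honest equality.
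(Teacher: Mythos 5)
Your proof is correct and rests on the same three ingredients as the paper's---the convexity of the integrand (Lemma~\ref{lemma:convexity}), the uniform $L^1$-bound on the first variation (Theorem~\ref{thm1.2}), and the $\Gamma$-convergence of the energies (Theorem~\ref{thm1.3})---but your execution is more streamlined. The paper writes the convexity inequality against an arbitrary $\eta\in C^\infty(\Omega)$, which forces it to first pass to the limit in the signed measure $(-H_\varepsilon+f_\varepsilon)\,dx$ (extracting a $t$-dependent subsequence $\varepsilon_i^t$ and then recovering the full sequence via a uniqueness-of-limits argument) and only afterwards to send $\eta\to u$ in $W^{1,\infty}$. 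You instead substitute $q=\nabla u(\cdot,t)$, $w=u(\cdot,t)$ directly into the convexity inequality; this is legitimate because the integration by parts on $\mathbb{T}^d$ only needs $u(\cdot,t)-u_\varepsilon(\cdot,t)$ Lipschitz and $\nabla u_\varepsilon/|\nabla u_\varepsilon|_\varepsilon$ smooth. The error term then collapses at once to $\int_\Omega (u_{\varepsilon_i}-u)(H_{\varepsilon_i}-f_{\varepsilon_i})\,dx$, which is killed by $\|u_{\varepsilon_i}-u\|_\infty\to 0$ paired with the $L^1$ bound from Theorem~\ref{thm1.2}, with no $t$-dependent subsequence and no separate weak-$\ast$ limit of measures. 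You also verify $\phi\le u\le\psi$ up front from the penalty bound $\int_\Omega V_\varepsilon(u_\varepsilon)\,dx\le C_0\varepsilon$, whereas the paper infers it at the end from $E(u)\le C_0<\infty$; your ordering is in fact the logically natural one, since the $\Gamma$-limsup step $\limsup_i E_{\varepsilon_i}(u(\cdot,t))\le E(u(\cdot,t))$ is only informative once $E(u(\cdot,t))<\infty$ is known. Both routes prove the same statement; yours trims two limiting passages from the paper's argument.
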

\begin{proof}
By Proposition \ref{prop:mp}, there exists $C>0$ such that
\[
\sup_{\varepsilon \in (0,1)} 
\| u_\varepsilon,\nabla u_\varepsilon,\partial _t u_\varepsilon \|_\infty \leq C
\]
holds. Hence there exist a subsequence $\varepsilon_i \to 0$
(denoted by same index) and $u \in W^{1,\infty} (\Omega)$ such that
\begin{equation}\label{eq:AA}
\nabla u_{\varepsilon_i} \rightharpoonup \nabla u \quad \text{weakly-}\ast \quad \text{in} \ L^\infty (\Omega \times (0,T);\R^d) \quad \text{and} \quad
    u_{\varepsilon _i} \to u \quad \text{uniformly on} \ \Omega\times [0,T]
\end{equation}
by the Arzel\`{a}-Ascoli theorem and the standard theory of the weak-$\ast$ convergences.
We only need to show 
$\phi \leq u\leq \psi $ and $\lim_{i\to \infty} E_{\varepsilon_i} (u_{\varepsilon_i}) =E (u)$ for this subsequence.

From the convexity, we have
\begin{equation}\label{eq:13}
\begin{split}
E_\varepsilon (\eta)
\geq & \,
E_\varepsilon (u_\varepsilon)
+ \int_{\Omega} \frac{\nabla u_\varepsilon}{|\nabla u_\varepsilon|_\varepsilon }
\cdot (\nabla \eta -\nabla u_\varepsilon)
+V' _\varepsilon (u_\varepsilon) (\eta -u_\varepsilon) \, dx \\
=& \,
E_\varepsilon (u_\varepsilon)
- \int_{\Omega} (-H_\varepsilon +f_\varepsilon ) (\eta -u_\varepsilon) \, dx
\end{split}
\end{equation}
for the solution $u_\varepsilon$ and for any $\eta \in C^\infty (\Omega)$.
Note that we may use $C^\infty (\Omega)$ as the space of test function because $\Omega =\mathbb{T}^d$.
Fix $t\geq 0$ and define a signed measure $\mu_\varepsilon ^t$
by $d\mu_\varepsilon ^t := (-H_\varepsilon +f_\varepsilon) \, dx$. Then $\sup_{\varepsilon \in (0,1)} |\mu_\varepsilon ^t|(\Omega) <\infty$ by Theorem \ref{thm1.2}.
Therefore there exist a signed measure $\mu ^t$
and a subsequence $\varepsilon_i ^t \to 0$ 
which depends on $t$ such that 
\[
\int \eta \, d\mu_{\varepsilon_i ^t} ^t 
\to \int \eta \, d\mu ^t
\qquad 
\text{for any} \ \eta \in C(\Omega).
\]
Note that we may use $C(\Omega)$ 
as a function space for test functions since this space is separable.
Since $u_\varepsilon (\cdot,t)$ converges to $u (\cdot,t)$ on $\Omega$ uniformly and 
$\sup_{\varepsilon \in (0,1)} |\mu_\varepsilon ^t|(\Omega) <\infty$, 
we have
\[
\int_{\Omega} (-H_{\varepsilon_i ^t} +f_{\varepsilon_i ^t} ) (\eta -u_{\varepsilon_i ^t}) \, dx
=
\int_{\Omega} (\eta -u) \, d\mu_{\varepsilon_i ^t} ^t
+
\int_{\Omega} (u-u_{\varepsilon_i ^t}) \, d\mu_{\varepsilon_i ^t} ^t
\to 
\int_{\Omega} (\eta -u) \, d\mu ^t
\]
as $i \to \infty$. From this, \eqref{eq:13}, and Theorem \ref{thm1.3}, we obtain
\[
E (\eta) \geq \limsup_{i\to \infty} E_{\varepsilon_i ^t } (\eta)
\geq \limsup_{i\to \infty} E_{\varepsilon_i ^t} (u_{\varepsilon_i ^t})
-\int_\Omega (\eta -u) \, d\mu^t
\]
for any $\eta \in C^\infty (\Omega)$.
Letting $\eta \to u$ in $W^{1,\infty}(\Omega)$, 
we get 
\[
E(u) \geq \limsup_{i\to \infty} E_{\varepsilon_i ^t} (u_{\varepsilon_i ^t}).
\]
Using Theorem \ref{thm1.3} again, we obtain
$\limsup_{i\to \infty} E_{\varepsilon_i ^t} (u_{\varepsilon_i ^t})\leq E(u) \leq \liminf_{i\to \infty} E_{\varepsilon_i ^t} (u_{\varepsilon_i ^t})$. Hence
$\lim_{i\to \infty} E_{\varepsilon_i ^t} (u_{\varepsilon_i ^t}) =E (u)$ for some subsequence $\{\varepsilon_i ^t\}$.
By repeating the same arguments, any subsequence $\{ \varepsilon _{i_j} \} \subset \{ \varepsilon_i \}$ satisfies
$\limsup_{j\to \infty} E_{\varepsilon_{i_j} } (u_{\varepsilon_{i_j}})\leq E(u) \leq \liminf_{j\to \infty} E_{\varepsilon_{i_j}} (u_{\varepsilon_{i_j}})$. 
This leads to the conclusion $\lim_{i\to \infty} E_{\varepsilon_i} (u_{\varepsilon_i}) =E (u)$ for any $t\geq 0$.
By \eqref{eq:monotone}, $E(u)\leq C_0 <\infty$ for any $t\geq 0$, and hence we have $\phi \leq u\leq \psi$ on $\Omega \times [0,T]$.
\end{proof}

\subsection{Convergence of the unit normal vectors}
In this subsection, we suppress the dependence on time and assume that $u_\varepsilon \in C^1(\Omega)$ and $ u \in W^{1,\infty} (\Omega)$ with $\Omega=\mathbb{T}^d$ satisfy
the following:
\begin{equation}
\label{A1}
\tag{A1}
\sup_{\varepsilon \in (0,1), x \in \Omega} \{ |u_\varepsilon (x)|, |\nabla u_\varepsilon (x)| \}<\infty.
\end{equation}
\begin{equation}
\label{A2} 
\tag{A2}
u_\varepsilon \to u \ \text{uniformly in} \ \Omega \ \ \text{and} \ \ \phi \leq u \leq \psi \  \text{in} \ \Omega.
\end{equation}
\begin{equation}
\label{A3} 
\tag{A3}
\nabla u_\varepsilon \rightharpoonup \nabla u \quad \text{weakly-}\ast \ \text{in} \ L^\infty (\Omega ; \mathbb{R}^d).
\end{equation}
\begin{equation}
\label{A4}
\tag{A4}
\sup_{\varepsilon \in (0,1)} \int_{\Omega} |-H_\varepsilon + f_\varepsilon| \, dx<\infty,
\end{equation}
where $H_\varepsilon = \divergence \nu_\varepsilon = -\divergence \left( \frac{\nabla u_\varepsilon}{|\nabla u_\varepsilon |_\varepsilon} \right)$ and $f_\varepsilon=-V_\varepsilon'(u_\varepsilon)$.

We define the contact sets $\Omega _+ :=\{ x\in \Omega \mid u(x)=\psi (x) \}$ and
$\Omega _- :=\{ x\in \Omega \mid u(x)=\phi (x) \}$. Since $u$ is uniformly Lipschitz and $\inf_{x \in \Omega} \{\psi (x)-\phi (x)\}>0$,
we have $\dist (\Omega_+,\Omega_-)>0$.
Define $\Omega _\pm ^\delta := \{ x\in \Omega \mid \dist(x,\Omega_\pm) < \delta \}$ for $\delta>0$. Then one can easily check that
$\dist (\Omega_+ ^\delta, \Omega_- ^\delta)>0$
for sufficiently small $\delta>0$, and
\begin{equation}\label{omega-delta}
    \inf_{x \in (\Omega _+ ^\delta)^c} \{\psi (x) -u(x)\} >0
\qquad
\text{and}
\qquad
\inf_{x \in (\Omega _- ^\delta)^c} \{u (x) -\phi (x)\} >0.
\end{equation}
\begin{lemma}\label{lemma2.6}
Let $\delta>0$
satisfy $\dist (\Omega_+ ^\delta, \Omega_- ^\delta)>0$. 
Then for sufficiently small $\varepsilon>0$ we have
$\{ x \in \Omega \mid f_{\varepsilon} (x) <0 \} \subset \Omega_+ ^\delta$ and
$\{ x \in \Omega \mid f_{\varepsilon} (x) >0 \} \subset \Omega_- ^\delta$.
\end{lemma}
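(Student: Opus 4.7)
The plan is to first identify the sign of $f_\varepsilon$ in terms of $u_\varepsilon$, $\phi$ and $\psi$, and then use the uniform convergence $u_\varepsilon \to u$ from~\eqref{A2} to localize the excess regions near $\Omega_\pm$.

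\textbf{Step 1 (algebraic identification of the sign of $f_\varepsilon$).} Differentiating the defining expression for $V_\varepsilon$,
\[
V_\varepsilon(u)=\frac{1}{\varepsilon}(\phi-u)_+^4+\frac{1}{\varepsilon}(u-\psi)_+^4,
\]
we compute
\[
f_\varepsilon = -V'_\varepsilon(u_\varepsilon) = \frac{4}{\varepsilon}\bigl(\phi-u_\varepsilon\bigr)_+^3 - \frac{4}{\varepsilon}\bigl(u_\varepsilon-\psi\bigr)_+^3.
\]
Because $\phi < \psi$, the two positive parts have disjoint supports; hence pointwise
\[
\{f_\varepsilon<0\}=\{u_\varepsilon>\psi\},\qquad \{f_\varepsilon>0\}=\{u_\varepsilon<\phi\}.
\]
Consequently, the two claimed inclusions reduce to showing that, for all sufficiently small $\varepsilon$,
\[
\{u_\varepsilon>\psi\}\subset \Omega_+^\delta \quad\text{and}\quad \{u_\varepsilon<\phi\}\subset \Omega_-^\delta.
\]

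\textbf{Step 2 (uniform convergence off the contact sets).} Set
\[
c_+:=\inf_{x\in \Omega\setminus \Omega_+^\delta}\{\psi(x)-u(x)\},\qquad c_-:=\inf_{x\in \Omega\setminus \Omega_-^\delta}\{u(x)-\phi(x)\}.
\]
By~\eqref{omega-delta} we have $c_+,c_->0$. Using the uniform convergence $u_\varepsilon \to u$ on $\Omega$ from~\eqref{A2}, choose $\varepsilon_0>0$ such that $\|u_\varepsilon-u\|_{L^\infty(\Omega)}<\tfrac{1}{2}\min\{c_+,c_-\}$ for every $\varepsilon\in(0,\varepsilon_0)$. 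Then for any $x\in \Omega\setminus \Omega_+^\delta$,
\[
u_\varepsilon(x)\leq u(x)+\tfrac{c_+}{2}\leq \psi(x)-c_++\tfrac{c_+}{2}=\psi(x)-\tfrac{c_+}{2}<\psi(x),
\]
so $(u_\varepsilon-\psi)_+(x)=0$ and in particular $f_\varepsilon(x)\geq 0$. Combined with Step 1 this gives $\{f_\varepsilon<0\}\subset \Omega_+^\delta$. The inclusion $\{f_\varepsilon>0\}\subset \Omega_-^\delta$ follows by the symmetric argument using $c_-$.

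There is no real obstacle here: the content of the lemma is an immediate consequence of (i) the explicit formula for $V'_\varepsilon$ and (ii) a standard $\varepsilon/2$--type argument based on the uniform convergence in~\eqref{A2} together with the strict separation~\eqref{omega-delta}.
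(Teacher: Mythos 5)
Your proof is correct and follows essentially the same approach as the paper: compute $f_\varepsilon = \frac{4}{\varepsilon}(\phi-u_\varepsilon)_+^3 - \frac{4}{\varepsilon}(u_\varepsilon-\psi)_+^3$ explicitly, then use uniform convergence $u_\varepsilon\to u$ together with the strict separation~\eqref{omega-delta} to conclude that $u_\varepsilon<\psi$ off $\Omega_+^\delta$ (hence $f_\varepsilon\geq 0$ there), and symmetrically for $\phi$. Your Step~1 characterization of $\{f_\varepsilon<0\}$ and $\{f_\varepsilon>0\}$ via the disjointness of the positive parts is a tidy presentational addition but not a different argument.
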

\begin{proof}
We may assume $\dist (\Omega_+ ^\delta, \Omega_- ^\delta)>0$. Since $u_\varepsilon$ converges to $u$ on $\Omega$ uniformly, for sufficiently small $\varepsilon>0$, we have
\begin{equation}\label{omega-delta2}
    \inf_{x \in (\Omega _+ ^\delta)^c} \{\psi (x) -u_\varepsilon(x)\} >0
\qquad
\text{and}
\qquad
\inf_{x \in (\Omega _- ^\delta)^c} \{u_\varepsilon (x) -\phi (x)\} >0,
\end{equation}
where we used \eqref{omega-delta}.
By \eqref{omega-delta2} and
\[
f_{\varepsilon} 
= \frac{4}{\varepsilon} (\phi-u_{\varepsilon})_+^3-\frac{4}{\varepsilon} (u_{\varepsilon}-\psi)_+^3,
\]
if $x \in (\Omega _+ ^\delta)^c $, 
$f_{\varepsilon} 
= \frac{4}{\varepsilon} (\phi-u_{\varepsilon})_+^3 \geq 0$.
Hence $\{ x \in \Omega \mid f_{\varepsilon} (x) <0 \} \subset \Omega_+ ^\delta$.
The rest of the proof can be shown in the same way.
\end{proof}

The main result of this section and the main ingredient for Theorem~\ref{thm:BVsol} is the following strict convergence in $BV$. This follows from a compensated compactness argument based on the key estimate~\eqref{A4}, which in turn follows from~Theorem~\ref{thm1.2}.
\begin{theorem}\label{thm:2.7}
Under the assumptions~\eqref{A1}--\eqref{A4}, $| \nabla u_\varepsilon | \rightharpoonup |\nabla u|$ weakly-$\ast$ in $L^\infty (\Omega )$.
\end{theorem}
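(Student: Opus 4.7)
The plan is to reproduce the argument of Theorem~\ref{thm:2.5} using only \eqref{A1}--\eqref{A4}, obtain the convergence of total masses $\int_\Omega |\nabla u_\varepsilon|\,dx \to \int_\Omega |\nabla u|\,dx$, and then upgrade to weak-$*$ convergence via lower semicontinuity. The weak-$*$ convergence of gradients in \eqref{A3} combined with convexity already gives the lower-semicontinuity half of the statement; the real work is the matching upper bound, which is precisely what \eqref{A4} buys us.

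First, I would apply Lemma~\ref{lemma:convexity} pointwise and integrate by parts (legitimate since $u_\varepsilon\in C^1$, $\Omega=\mathbb{T}^d$, and the test function sits in $W^{1,\infty}$), plugging in the target $u$ itself. This yields
\[
    E_\varepsilon(u_\varepsilon) \leq E_\varepsilon(u) + \int_\Omega (-H_\varepsilon + f_\varepsilon)(u-u_\varepsilon)\, dx.
\]
By \eqref{A2}, $V_\varepsilon(u)\equiv 0$, so $E_\varepsilon(u)\to \int_\Omega|\nabla u|\,dx = E(u)$ by dominated convergence. Also by \eqref{A2} and \eqref{A4}, $\|u-u_\varepsilon\|_\infty\to 0$ while $\int_\Omega|-H_\varepsilon + f_\varepsilon|\,dx$ is uniformly bounded, so the remainder integral vanishes. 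Hence $\limsup_\varepsilon E_\varepsilon(u_\varepsilon)\leq E(u)$; combined with the $\Gamma$-liminf from Theorem~\ref{thm1.3}, this upgrades to $E_\varepsilon(u_\varepsilon)\to E(u)$. Since $V_\varepsilon\geq 0$ and $|\nabla u_\varepsilon|\leq |\nabla u_\varepsilon|_\varepsilon\leq |\nabla u_\varepsilon| + \varepsilon$, the convergence of energies together with the lower-semicontinuity bound $\liminf \int_\Omega|\nabla u_\varepsilon|\,dx \geq \int_\Omega|\nabla u|\,dx$ from \eqref{A3} forces
\[
    \int_\Omega |\nabla u_\varepsilon|\,dx \longrightarrow \int_\Omega |\nabla u|\,dx
    \qquad \text{and} \qquad
    \int_\Omega V_\varepsilon(u_\varepsilon)\,dx \longrightarrow 0.
\]

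Finally, to promote this mass convergence to weak-$*$ convergence in $L^\infty(\Omega)$, I would extract a subsequence so that $|\nabla u_\varepsilon|\rightharpoonup h$ weakly-$*$ in $L^\infty(\Omega)$ for some $h\geq 0$. For every open $O\subset \Omega$, convexity of $p\mapsto |p|$ and \eqref{A3} yield
\[
    \int_O |\nabla u|\,dx \leq \liminf_{\varepsilon\to 0}\int_O |\nabla u_\varepsilon|\,dx = \int_O h\,dx,
\]
so $|\nabla u|\leq h$ a.e.; together with the total-mass equality $\int_\Omega h\,dx=\int_\Omega|\nabla u|\,dx$ this forces $h=|\nabla u|$ a.e. Subsequence-independence then upgrades the convergence to the full sequence.

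The argument has no single hard step; all the content is packed into \eqref{A4} (via Theorem~\ref{thm1.2}), which absorbs the remainder term in the convexity inequality, and the pointwise convexity of $L$ from Lemma~\ref{lemma:convexity}. The one subtlety worth double-checking is that $\eta=u\in W^{1,\infty}$ is an admissible test function in the convexity inequality after integration by parts, but this is routine given the $C^2$-regularity of $u_\varepsilon$ and periodic boundary conditions.
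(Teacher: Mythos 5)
Your proof is correct, but it follows a genuinely different route from the paper's. The paper proves Theorem~\ref{thm:2.7} via a compensated-compactness argument built on Young measures: using~\eqref{A4} it extracts a weak-$\ast$ limit measure $\mu$ of $(-H_\varepsilon+f_\varepsilon)\,dx$ and a weak-$\ast$ limit $\nu$ of $\nu_\varepsilon$ with $|\nu|\le 1$; invoking Lemma~\ref{lemma2.6} to control $\int_\Omega|f_\varepsilon|\,dx$, it establishes the div--curl type identity $\nabla u_\varepsilon\cdot\nu_\varepsilon\rightharpoonup\nabla u\cdot\nu$ through the chain of integrations by parts in~\eqref{eq:15}--\eqref{eq:17}; and it then deduces $|\nabla u_\varepsilon|\rightharpoonup -\nabla u\cdot\nu=|\nabla u|$ from the Young-measure representation together with the Cauchy--Schwarz rigidity $|\nu|\le 1$. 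Your argument bypasses all of this: re-running the convexity inequality of Lemma~\ref{lemma:convexity} with $\eta=u$ directly (legitimate on $\mathbb T^d$, since $u_\varepsilon$ is smooth, $u\in W^{1,\infty}$, and the integration by parts only requires $u-u_\varepsilon\in W^{1,1}$), killing the remainder $\int_\Omega(-H_\varepsilon+f_\varepsilon)(u-u_\varepsilon)\,dx$ via $\|u-u_\varepsilon\|_\infty\to0$ from~\eqref{A2} together with the uniform bound~\eqref{A4}, and invoking the $\Gamma$-$\liminf$ from Theorem~\ref{thm1.3}, you obtain $\lim_\varepsilon E_\varepsilon(u_\varepsilon)=E(u)$, hence $\int_\Omega|\nabla u_\varepsilon|\,dx\to\int_\Omega|\nabla u|\,dx$; the upgrade to weak-$\ast$ convergence of $|\nabla u_\varepsilon|$ via lower semicontinuity of $p\mapsto|p|$ against~\eqref{A3} and the total-mass pinching is standard. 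This also delivers $\int_\Omega V_\varepsilon(u_\varepsilon)\,dx\to0$ for free (the content of the unnamed corollary following Theorem~\ref{thm:2.7}). The one thing to be aware of is that the paper's Young-measure bookkeeping is not wasted effort: the objects $\nu_x$ and the identity~\eqref{eq:31} constructed in the paper's proof of Theorem~\ref{thm:2.7} are reused directly in the proof of Theorem~\ref{thm:2.9} to identify $\nu=-\nabla u/|\nabla u|$ a.e.\ on $\{|\nabla u|\ne0\}$ and obtain strong $L^2$ convergence of the approximate normals, which your shorter proof of Theorem~\ref{thm:2.7} does not set up.
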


\begin{proof}
By the boundedness of $\nabla u_\varepsilon$, there exists a subsequence $\varepsilon _i \to 0$ and a Borel measure $\nu_x$ for a.e.\ $x \in \Omega$ such that
\begin{equation}\label{young1}
\nu_x (\Omega)=1, \qquad F (\nabla u_{\varepsilon _i}) \rightharpoonup \overline{F} \ 
\text{weakly-}\ast \ \text{in} \ L^\infty (\Omega)
\end{equation}
for any $F \in C(\mathbb{R}^d ; \R)$, where
\begin{equation}\label{young2}
\overline{F} (x):= \int_{\Omega} F(\lambda) \,d\nu _x (\lambda) \qquad \text{for a.e.} \ x \in \Omega
\end{equation}
(see \cite[Theorem 11]{MR1034481}).
Since $|\nu _\varepsilon (x)| \leq 1$ for any $x \in \Omega$, by taking the subsequence, 
we may assume that there exists $\nu \in L^\infty (\Omega; \R^d)$ such that $\nu_{\varepsilon_i} \rightharpoonup \nu$ weakly-$\ast$ in $L^\infty (\Omega; \R^d)$. By using
\[
\| \nu \|_{L^\infty (\Omega; \R^d)} = \sup_{\eta \in L^1 (\Omega; \R^d), \ \| \eta \|_{L^1}=1} 
\left| \int_{\Omega} \nu \cdot \eta \, dx \right|,
\]
we have $|\nu (x)| \leq 1$ for a.e.\ $x \in \Omega$.
From the boundedness of $|- H_\varepsilon +f_\varepsilon|$, by taking the subsequence if necessary, we have
\[
\int_{\Omega} (-H_{\varepsilon_i}  + f_{\varepsilon_i} ) \eta \, dx 
\to \int_{\Omega} \eta \, d\mu,
\qquad \forall \, \eta \in C(\Omega)
\]
for some signed measure $\mu$.
Then we compute that
\begin{equation}\label{eq:15}
\begin{split}
&\lim_{i\to \infty} \int_\Omega f_{\varepsilon _i}  \eta \, dx
= \lim_{i\to \infty} \int_{\Omega} (-H_{\varepsilon_i} + f_{\varepsilon _i} ) \eta
+ H_{\varepsilon _i} \eta \, dx \\
=&  \int_{\Omega} \eta \, d\mu - \lim_{i\to \infty} \int_\Omega \nu _{\varepsilon_i} \cdot \nabla \eta \, dx
=\int_{\Omega} \eta \, d\mu 
- \int_\Omega \nu \cdot \nabla \eta \, dx,\qquad \forall \, \eta \in W^{1,\infty} (\Omega).
\end{split}
\end{equation}
Thus the limit of $\int_\Omega f_{\varepsilon _i} \eta \, dx$ exists.
By Lemma \ref{lemma2.6}, we may choose $\eta \in C^1 (\Omega)$ such that $|\eta|\leq 1$, $\eta = -1$ on $\Omega_+ ^\delta$
and $\eta = 1$ on $\Omega_- ^\delta$. Substituting this into \eqref{eq:15}, we obtain
\[
\limsup_{i\to \infty} \int_{\Omega} |f_{\varepsilon_i}|\, dx<\infty.
\]
By using this and $\lim_{i\to \infty}\|u_{\varepsilon_i} -u\|_\infty =0$, 
we have
\begin{equation}\label{eq:16}
\left| \int_{\Omega} f_{\varepsilon _i}  (u_{\varepsilon_i } -u) \eta \, dx \right|
\leq \| u_{\varepsilon_i} -u \|_\infty  \| \eta \|_\infty
\int_{\Omega} |f_{\varepsilon _i}  | \, dx \to 0 \qquad 
\text{as} \ i \to \infty,\qquad \forall \, \eta \in W^{1,\infty} (\Omega).
\end{equation}
By \eqref{eq:15} and \eqref{eq:16}, we obtain
\begin{equation}\label{eq:17}
\begin{split}
\lim_{i\to \infty} \int_{\Omega}
f_{\varepsilon_i}  u_{\varepsilon_i} \eta \, dx
=& \,
\lim_{i\to \infty}
\int_{\Omega}
f_{\varepsilon_i}  (u_{\varepsilon_i}-u) \eta \, dx
+
\lim_{i\to \infty}
\int_{\Omega}
f_{\varepsilon_i}  u \eta \, dx \\
=& \,
\int_{\Omega} u\eta \, d\mu
- \int_{\Omega} \nu \cdot \nabla (u\eta)\, dx,
\qquad \forall \, \eta \in W^{1,\infty} (\Omega).
\end{split}
\end{equation}
By \eqref{eq:17}, for any $\eta \in C^1 (\Omega)$ we have
\begin{equation}
\begin{split}
&\lim_{i\to \infty} \int_{\Omega} \nabla u_{\varepsilon _i} \cdot \nu_{\varepsilon _i} \eta \,dx
= 
-\lim_{i\to \infty} \int_{\Omega} u_{\varepsilon _i} H_{\varepsilon_i} \eta +u_{\varepsilon_i} \nu_{\varepsilon _i} \cdot \nabla \eta \,dx \\
&= 
\lim_{i\to \infty} \int_{\Omega} (-H_{\varepsilon_i} + f_{\varepsilon_i} ) u_{\varepsilon _i} \eta -  f_{\varepsilon_i}  u_{\varepsilon _i} \eta
-u_{\varepsilon_i} \nu_{\varepsilon _i} \cdot \nabla \eta \,dx \\
&=  \int_{\Omega} u\eta \, d\mu - \int_{\Omega} u\eta \, d\mu + \int_{\Omega} \nu \cdot \nabla (u\eta) \, dx - \int_{\Omega} u \nu \cdot \nabla \eta \, dx
= \int_{\Omega} (\nu \cdot \nabla u) \eta \, dx.
\end{split}
\end{equation}
Hence 
\begin{equation}
\nabla u_{\varepsilon _i} \cdot \nu_{\varepsilon _i} \rightharpoonup \nabla u \cdot \nu
\quad \text{weakly-}\ast \ \text{in} \ L^\infty.
\end{equation}
By this and
\[
| | \nabla u_{\varepsilon_i} | + \nabla u_{\varepsilon _i} \cdot \nu_{\varepsilon _i} |
= \left| 
\frac{ |\nabla u _{\varepsilon _i}| ( \sqrt{ |\nabla u_{\varepsilon_i}|^2 + \varepsilon _i ^2 }-|\nabla u_{\varepsilon_i}| )  }{\sqrt{ |\nabla u_{\varepsilon_i}|^2 + \varepsilon _i ^2 }}
\right| \leq \varepsilon _i,
\]
we have
\begin{equation}\label{eq:20}
| \nabla u_{\varepsilon_i} | \rightharpoonup -\nabla u \cdot \nu
\quad \text{weakly-}\ast \ \text{in} \ L^\infty.
\end{equation}
By \eqref{young1} and \eqref{young2}, 
\begin{equation}\label{young3}
| \nabla u_{\varepsilon _i} | \rightharpoonup \int_{\Omega} | \lambda | \, d \nu_x (\lambda) \quad \text{weakly-}\ast \ \text{in} \ L^\infty
\end{equation}
and
\begin{equation}\label{young4}
 \nabla u_{\varepsilon _i}  \rightharpoonup \int_{\Omega} \lambda  \, d \nu_x (\lambda) \quad \text{weakly-}\ast \ \text{in} \ L^\infty,
\end{equation}
by taking $F (\lambda)=|\lambda|$ or $F(\lambda)=\lambda_k$ for $\lambda =(\lambda_1,\dots,\lambda_d)$. By the assumption
$\nabla u_{\varepsilon _i}  \rightharpoonup \nabla u$ weakly-$\ast$ in $L^\infty$, we have
\begin{equation}\label{eq:23}
\nabla u (x) =\int_{\Omega} \lambda  \, d \nu_x (\lambda),\qquad \text{a.e.} \ x \in \Omega.
\end{equation}
By this and \eqref{eq:20}, \eqref{young3}, 
\begin{equation}\label{eq:31}
-\int_{\Omega} \lambda  \, d \nu_x (\lambda) \cdot \nu (x)
=-\nabla u (x) \cdot \nu (x) = \int_{\Omega} |\lambda| \, d \nu _x (\lambda),\qquad \text{a.e.} \ x \in \Omega.
\end{equation}
Hence
\begin{equation}
\begin{split}
\int_{\Omega} |\lambda| \, d \nu _x (\lambda)
= \left| -\int_{\Omega} \lambda  \, d \nu_x (\lambda) \cdot \nu (x) \right| 
\leq \left| \int_{\Omega}  \lambda  \, d \nu_x (\lambda)\right| |\nu (x)| 
\leq \int_{\Omega} | \lambda | \, d \nu_x (\lambda) \qquad \text{a.e.} \ x \in \Omega,
\end{split}
\end{equation}
where we used $|\nu (x)| \leq 1$ for a.e.\ $x \in \Omega$. Therefore for a.e.\ $x \in \Omega$,
\begin{equation}\label{eq:33}
\int_{\Omega} | \lambda | \, d \nu_x (\lambda)
=\left| \int_{\Omega}  \lambda  \, d \nu_x (\lambda)\right|
=| \nabla u (x) |, \qquad \text{a.e.} \ x \in \Omega,
\end{equation}
where we used \eqref{eq:23}. By this and \eqref{young3}, 
$| \nabla u_{\varepsilon_i} | \rightharpoonup |\nabla u|$ weakly-$\ast$ in $L^\infty (\Omega )$. Note that this convergence holds for any subsequence $\varepsilon _i \to 0$, hence 
we obtain the statement.
\end{proof}

The following corollary shows that, thanks to the previous theorem, energy convergence implies that the potential energy vanishes in the limit $\varepsilon \to0$.

\begin{corollary}
    Suppose that $\lim_{\varepsilon \to 0} E_{\varepsilon} (u_\varepsilon) =E(u)$. Then
    $\lim_{\varepsilon \to 0} \int_{\Omega} V_{\varepsilon} (u_\varepsilon) \, dx =0$.
\end{corollary}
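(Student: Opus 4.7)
The plan is to decompose the approximate energy
\[
    E_\varepsilon(u_\varepsilon) = \int_\Omega |\nabla u_\varepsilon|_\varepsilon \, dx + \int_\Omega V_\varepsilon(u_\varepsilon) \, dx
\]
and show that, under the hypothesis of energy convergence, the first summand already converges to the full limit $E(u)=\int_\Omega |\nabla u|\,dx$, so that the nonnegative potential term must vanish in the limit.

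First, I would compare $|\nabla u_\varepsilon|_\varepsilon$ with $|\nabla u_\varepsilon|$ via the pointwise estimate
\[
0 \leq |\nabla u_\varepsilon|_\varepsilon - |\nabla u_\varepsilon| = \frac{\varepsilon^2}{|\nabla u_\varepsilon|_\varepsilon + |\nabla u_\varepsilon|} \leq \varepsilon,
\]
so that, using $|\Omega|=1$,
\[
\int_\Omega |\nabla u_\varepsilon|_\varepsilon \, dx = \int_\Omega |\nabla u_\varepsilon| \, dx + O(\varepsilon).
\]
This reduces the question to the behavior of $\int_\Omega |\nabla u_\varepsilon|\,dx$ as $\varepsilon\to0$.

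Second, I would invoke Theorem~\ref{thm:2.7}, which gives $|\nabla u_\varepsilon| \rightharpoonup |\nabla u|$ weakly-$\ast$ in $L^\infty(\Omega)$. Testing this convergence against the constant function $\eta \equiv 1$, which lies in $L^1(\Omega)$ since $\Omega=\mathbb T^d$ has finite measure, I obtain
\[
\lim_{\varepsilon\to 0} \int_\Omega |\nabla u_\varepsilon|\,dx = \int_\Omega |\nabla u|\,dx = E(u),
\]
where the last equality uses that $\phi \leq u\leq \psi$ (so that the indicator part of the limit energy vanishes and only the BV norm survives). Combining the two previous steps,
\[
\lim_{\varepsilon\to 0} \int_\Omega |\nabla u_\varepsilon|_\varepsilon \, dx = E(u).
\]

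Finally, using the standing assumption $\lim_{\varepsilon\to 0} E_\varepsilon(u_\varepsilon) = E(u)$ and subtracting, I conclude
\[
\lim_{\varepsilon\to 0} \int_\Omega V_\varepsilon(u_\varepsilon)\,dx = \lim_{\varepsilon\to 0}\Big(E_\varepsilon(u_\varepsilon) - \int_\Omega |\nabla u_\varepsilon|_\varepsilon\,dx\Big) = E(u) - E(u) = 0,
\]
which is the claim. There is no real obstacle here; the substance of the argument is entirely contained in Theorem~\ref{thm:2.7}, and the corollary is essentially an algebraic consequence of that strict $BV$-type convergence together with nonnegativity of $V_\varepsilon$. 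The only minor care required is in the passage from weak-$\ast$ convergence of $|\nabla u_\varepsilon|$ to convergence of its integral, which is immediate on the torus because constants are admissible test functions.
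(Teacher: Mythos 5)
Your proof is correct and takes essentially the same route as the paper: both use the pointwise bound $0\le |\nabla u_\varepsilon|_\varepsilon - |\nabla u_\varepsilon| \le \varepsilon$, invoke Theorem~\ref{thm:2.7} (tested against the constant function) to get $\int_\Omega|\nabla u_\varepsilon|\,dx\to\int_\Omega|\nabla u|\,dx$, and subtract. The only cosmetic difference is that the paper first moves the $\varepsilon$-error into the energy identity and then applies Theorem~\ref{thm:2.7}, whereas you first pass to the limit of $\int|\nabla u_\varepsilon|_\varepsilon\,dx$ and then subtract, but the substance is identical.
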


\begin{proof}
By the assumption and $\big||\nabla u_\varepsilon|_\varepsilon -|\nabla u_\varepsilon|\big| \leq \varepsilon$,
\[
\lim_{\varepsilon \to 0} 
\int_{\Omega} |\nabla u_\varepsilon| +V_\varepsilon (u_\varepsilon) \, dx
= \int_{\Omega} |\nabla u| \, dx.
\]
On the other hand, from Theorem \ref{thm:2.7} we have
\[
\lim_{\varepsilon \to 0} \int_{\Omega} |\nabla u_\varepsilon| \, dx
= \int_{\Omega} |\nabla u| \, dx.
\]
Therefore $\lim_{\varepsilon \to 0} \int_{\Omega} V_{\varepsilon} (u_\varepsilon) \, dx =0$.
\end{proof}

The proof of the following theorem is exactly the same as that in \cite{MR1315658}. 
\begin{theorem}\label{thm:2.9}
\begin{enumerate}[(i)]
    \item \label{thm2.9:i} Suppose $\nu_{\varepsilon_i} \rightharpoonup \nu$ weakly-$\ast$ in $L^\infty (\R^d;\R^d)$. Then we have
    \begin{equation}
    \nu (x)= -\frac{\nabla u (x)}{|\nabla u (x)|}
    \qquad \text{a.e.} \ x \in \{ |\nabla u| \not =0 \}.
    \end{equation}
    \item\label{thm2.9:ii} It holds that
    \begin{equation}
    \nu_{\varepsilon}=-\frac{\nabla u_{\varepsilon}}{|\nabla u_{\varepsilon}|_{\varepsilon}}
    \to -
    \frac{\nabla u}{|\nabla u|}
    \qquad 
    \text{strongly in} \ L^2 (\{ |\nabla u| \not =0 \} ; \R^d).
    \end{equation}
\end{enumerate}
\end{theorem}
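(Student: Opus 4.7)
My plan is to handle the two parts in order, using that every relevant weak-$\ast$ limit has already been identified in the proof of Theorem~\ref{thm:2.7}.

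For part~\eqref{thm2.9:i}, I would combine two weak-$\ast$ limits obtained along the given subsequence $\varepsilon_i$. On the one hand, Theorem~\ref{thm:2.7} states $|\nabla u_{\varepsilon_i}| \rightharpoonup |\nabla u|$ weakly-$\ast$ in $L^\infty(\Omega)$. On the other hand, the convergence~\eqref{eq:20} established during the proof of Theorem~\ref{thm:2.7} asserts $|\nabla u_{\varepsilon_i}| \rightharpoonup -\nabla u \cdot \nu$ weakly-$\ast$ in the same space. Uniqueness of the weak-$\ast$ limit then yields the pointwise identity $-\nabla u(x)\cdot \nu(x) = |\nabla u(x)|$ for a.e.\ $x \in \Omega$. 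Combined with the a.e.\ bound $|\nu|\leq 1$ recorded in the proof of Theorem~\ref{thm:2.7}, the equality case of Cauchy--Schwarz forces $|\nu(x)|=1$ and $\nu(x) = -\nabla u(x)/|\nabla u(x)|$ for a.e.\ $x \in \{|\nabla u|\neq 0\}$.

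For part~\eqref{thm2.9:ii}, I would argue that it suffices to prove strong $L^2$ convergence along any subsequence for which $\nu_\varepsilon \rightharpoonup \nu$ weakly-$\ast$ in $L^\infty(\Omega;\mathbb{R}^d)$, since by part~\eqref{thm2.9:i} every such limit agrees with $-\nabla u/|\nabla u|$ on $\{|\nabla u|\neq 0\}$ and a standard subsequence principle then transfers the conclusion to the full sequence. The key elementary estimate is that on $\{|\nabla u|\neq 0\}$, $|\nu_\varepsilon|\leq 1$ (by definition) and $|\nu|=1$ (from part~\eqref{thm2.9:i}) give
\[
    |\nu_\varepsilon - \nu|^2
    = |\nu_\varepsilon|^2 - 2\,\nu_\varepsilon\cdot \nu + |\nu|^2
    \leq 2\bigl(1 - \nu_\varepsilon\cdot \nu\bigr)
    \quad \text{a.e.},
\]
and $\nu_\varepsilon\cdot \nu \leq |\nu_\varepsilon||\nu| \leq 1$ guarantees the right-hand side is pointwise nonnegative. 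Integrating over $\{|\nabla u|\neq 0\}$, which has finite measure, and using the weak-$\ast$ convergence tested against $\mathbf{1}_{\{|\nabla u|\neq 0\}}\nu \in L^1(\Omega;\mathbb{R}^d)$ yields
\[
    \int_{\{|\nabla u|\neq 0\}} \bigl(1-\nu_\varepsilon\cdot \nu\bigr)\, dx
    \longrightarrow
    \int_{\{|\nabla u|\neq 0\}} \bigl(1-|\nu|^2\bigr)\, dx
    = 0,
\]
so the strong $L^2$ convergence follows at once.

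Beyond these two steps, there is no genuine obstacle: part~\eqref{thm2.9:i} is an immediate consequence of comparing the two weak-$\ast$ limits for $|\nabla u_{\varepsilon_i}|$, and part~\eqref{thm2.9:ii} succeeds because the constraint $|\nu|=1$ on the target set collapses the usual ``weak plus norm convergence implies strong convergence'' argument to the single scalar identity $\int \nu_\varepsilon\cdot \nu \to \int |\nu|^2$ that is delivered for free by the weak-$\ast$ convergence of $\nu_\varepsilon$.
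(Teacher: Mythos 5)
Your proof is correct and follows essentially the same route as the paper's. For part (i) you reach the key a.e.\ identity $-\nabla u\cdot\nu=|\nabla u|$ by comparing the two weak-$\ast$ limits of $|\nabla u_{\varepsilon_i}|$ coming from Theorem~\ref{thm:2.7} and from \eqref{eq:20}, whereas the paper phrases the same step through the Young-measure moment $\overline\lambda(x)=\int\lambda\,d\nu_x(\lambda)$, which equals $\nabla u(x)$ anyway; and your part (ii) — the bound $|\nu_\varepsilon-\nu|^2\le 2(1-\nu_\varepsilon\cdot\nu)$ together with testing the weak-$\ast$ convergence against $\mathbf 1_{\{|\nabla u|\ne0\}}\nu\in L^1$ and upgrading from subsequences — is exactly the paper's argument.
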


\begin{proof}
First we denote
\[
\overline{\lambda} (x) := \int_{\Omega} \lambda \, d\nu_{x} (\lambda).
\]
By \eqref{eq:23} and \eqref{eq:31}, we have $\overline{\lambda} (x) = \nabla u (x)$ 
and 
$-\overline{\lambda} (x) \cdot \nu (x) = \int_{\Omega} |\lambda| \, d\nu_x (\lambda) $
for a.e.\ $x \in \Omega$.
Hence,
\[
\int_{\Omega} |\lambda| \, d\nu_x (\lambda)
=-\overline{\lambda} (x) \cdot \nu (x) \leq 
|\overline{\lambda} (x)| \leq 
\int_{\Omega} |\lambda| \, d\nu_x (\lambda)
\qquad \text{for a.e.} \ x \in \Omega.
\]
Therefore $|\nu|=1$ and
$\overline{\lambda} $
has the same orientation as $ \nu $ a.e.\ on $\{ \overline{\lambda} \not =0 \} = 
\{ |\nabla u| \not =0 \} $. Thus,
\[
\nu =- \frac{\overline{\lambda}}{|\overline{\lambda}|}
= -\frac{\nabla u}{|\nabla u|}
\qquad \text{for a.e.\ on} \ \{|\nabla u|\not =0\}.
\]
This shows~\eqref{thm2.9:i}. By using~\eqref{thm2.9:i}, 
\[
\nu_{\varepsilon_i} \rightharpoonup -\frac{\nabla u}{|\nabla u|}
\qquad \text{weakly-}\ast \ \text{on} \ 
\{|\nabla u|\not = 0\}
\]
for any subsequence which converges weakly. 
Hence it holds that
\[
\nu_{\varepsilon} \rightharpoonup -\frac{\nabla u}{|\nabla u|}
\qquad \text{weakly-}\ast \ \text{on} \
\{|\nabla u|\not = 0\}.
\]
We have
\[
\int_{\{|\nabla u|\not =0\}} 
|\nu_\varepsilon -\nu|^2 \, dx
=
\int_{\{|\nabla u|\not =0\}} 
(|\nu_\varepsilon|^2 + |\nu|^2 -2 \nu \cdot\nu_\varepsilon) \, dx
\leq
2\int_{\{|\nabla u|\not =0\}} 
(1 - \nu \cdot \nu_\varepsilon) \, dx.
\]
Note that since $\nu \in L^1 (\Omega; \R^d)$, 
\[
\int_{\{|\nabla u|\not =0\}} 
(1 - \nu \cdot \nu_\varepsilon) \, dx
\to 
\int_{\{|\nabla u|\not =0\}} 
(1 - |\nu|^2) \, dx
=0.
\]
Therefore we obtain~\eqref{thm2.9:ii}.
\end{proof}

\section{Existence theorem}
In this section, we assume that $\{u_{\varepsilon_i}\}_{i=1} ^\infty$ and $u$ satisfy all the claims of
Theorem \ref{thm:2.5}.
Summarizing what was obtained in the previous section, 
the following holds:
By the boundedness 
\[
\sup_{i\geq 1, (x,t) \in \Omega \times [0,T]} \{ |u_{\varepsilon_i} (x,t)|, |\nabla u_{\varepsilon_i} (x,t)| \}<\infty,
\] 
for any $t\in [0,T]$ and for any $k=1,2,\dots,d$, there exists a subsequence $\varepsilon_{i_j} \to 0$ and $w_k \in L^\infty (\Omega)$ (depending on $t$ and $k$) such that $\partial_{x_k} u_{\varepsilon_{i_j}} (\cdot,t) \rightharpoonup w_k$ weakly-$\ast$ in $L^\infty (\Omega)$. Since $u_{\varepsilon_{i_j}} (\cdot,t)$ converges to $u (\cdot,t)$ on $\Omega$ uniformly,
\begin{equation}\label{eq:35}
\int_{\Omega} w_k \zeta \, dx
= -\int_\Omega u (\cdot,t) \partial_{x_k} \zeta \, dx
\qquad \text{for any} \ \zeta \in C^1(\Omega).
\end{equation}
Hence $w=(w_1,\dots,w_d)$ is uniquely determined and $w= \nabla u (\cdot,t)$ by \eqref{eq:35} (note that $\nabla u (\cdot,t) \in L^\infty (\Omega;\R^d)$ exists for a.e.\ $t \in (0,T)$ since $u$ is Lipschitz, but \eqref{eq:35} shows the weak derivative $\nabla u (\cdot,t) \in L^\infty (\Omega;\R^d)$ exists for any $t\in [0,T]$). Therefore
$\nabla u_{\varepsilon_{i}} (\cdot,t) \rightharpoonup \nabla u(\cdot,t)$ weakly-$\ast$ in $L^\infty (\Omega)$ in the sense of full limit, for any $t \in [0,T]$
(this property corresponds to (A3) in the previous subsection).
Therefore Theorem \ref{thm:2.7} and Theorem \ref{thm:2.9} imply
\begin{equation}\label{eq:weak-star37}
|\nabla u_\varepsilon (\cdot,t)| \rightharpoonup |\nabla u(\cdot,t)| \qquad \text{weakly-}\ast \ \text{in} \ L^\infty (\Omega)
\end{equation}
for any $t\geq 0$ and
\begin{equation}
\frac{\nabla u_\varepsilon (\cdot, t)}{|\nabla u_\varepsilon (\cdot ,t) |_\varepsilon}
\to \frac{\nabla u (\cdot,t)}{|\nabla u (\cdot,t)|}
\qquad \text{in} \ L^2 (\{ x \in \Omega \mid |\nabla u (x,t)| >0 \})
\end{equation}
for any $t\geq 0$ (we write $\varepsilon_i$ as $\varepsilon$ for simplicity). Note that one can easily check that
\begin{equation}\label{eq33}
|\nabla u_\varepsilon| \rightharpoonup |\nabla u| \qquad \text{weakly-}\ast \ \text{in} \ L^\infty (\Omega\times (0,\infty))
\end{equation}
and
\begin{equation}\label{eq:36}
\frac{\nabla u_\varepsilon }{|\nabla u_\varepsilon  |_\varepsilon}
\to \frac{\nabla u }{|\nabla u |}
\qquad \text{in} \ L^2 _{loc} (\{ (x,t) \in \Omega \times (0,\infty) \mid |\nabla u (x,t)| >0 \}).
\end{equation}

By the boundedness 
$\max _{(x,t) \in \Omega_T } 
| \partial_t u_\varepsilon (x,t)|
\leq
\max_{x \in \Omega } |\nabla ^2 g (x)|$ for any $T>0$,
we may choose a further subsequence so that
\begin{equation}\label{eq:37}
\partial_t u_\varepsilon 
=|\nabla u_\varepsilon|_\varepsilon ( -H_\varepsilon + f_\varepsilon)
\rightharpoonup \partial _t u
\qquad 
\text{weakly-}\ast \ \text{in} \ L^\infty (\Omega \times (0,\infty)),
\end{equation}
where $\partial_t u \in L^\infty (\Omega \times (0,\infty))$ is the weak time derivative of $u$.

We have the following lemma.

\begin{lemma}\label{lemma3.1}
\begin{enumerate}[(i)]
\item \label{lemma3.1(i)} For any $t_1,t_2 \in [0,\infty)$ with $t_1<t_2$, we have
\begin{equation}\label{eq:38}
\int_{\Omega}
|\nabla u_\varepsilon|_\varepsilon + V_\varepsilon (u_\varepsilon) \, dx\Big|_{t=t_2}
+
\int_{t_1} ^{t_2} \int_\Omega ( -H_\varepsilon + f_\varepsilon)^2 |\nabla u_\varepsilon|_\varepsilon 
\, dx dt
=
\int_{\Omega}
|\nabla u_\varepsilon|_\varepsilon + V_\varepsilon (u_\varepsilon) \, dx\Big|_{t=t_1}
\leq C_0. 
\end{equation}
\item\label{lemma3.1(ii)} $\partial_t u =0$ a.e.\ on $\{ (x,t) \in \Omega \times (0,\infty) \mid  |\nabla u (x,t)|=0 \}$.
\end{enumerate}
\end{lemma}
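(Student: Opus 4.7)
\emph{Part~\eqref{lemma3.1(i)}.} The identity \eqref{eq:38} is exactly the time-integrated form of the gradient-flow equality \eqref{eq:EDI} recorded in the introduction. Indeed, the first variation of the energy is $\delta E_\varepsilon(u_\varepsilon) = -\divergence (\nabla u_\varepsilon / |\nabla u_\varepsilon|_\varepsilon) + V'_\varepsilon(u_\varepsilon) = H_\varepsilon - f_\varepsilon$, so the integrand on the right-hand side of \eqref{eq:EDI} equals $(-H_\varepsilon + f_\varepsilon)^2 |\nabla u_\varepsilon|_\varepsilon$. Integrating in time from $t_1$ to $t_2$ yields the equality in \eqref{eq:38}, and the bound by $C_0$ follows from \eqref{eq:monotone} applied at time $t_1$.

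\emph{Part~\eqref{lemma3.1(ii)}.} The plan is to upgrade the energy-dissipation bound from part~\eqref{lemma3.1(i)} to an absolute continuity statement for the measure $\partial_t u \, dx \, dt$ with respect to $|\nabla u|\, dx \, dt$. For any $\zeta \in C^1_c(\Omega \times (0,\infty))$, the identity $\partial_t u_\varepsilon = |\nabla u_\varepsilon|_\varepsilon(-H_\varepsilon + f_\varepsilon)$ combined with Cauchy--Schwarz gives
\begin{equation*}
\left| \int_0^\infty \!\!\int_\Omega \zeta\, \partial_t u_\varepsilon \, dx\, dt \right|^2
\leq
\int_0^\infty \!\!\int_\Omega \zeta^2 |\nabla u_\varepsilon|_\varepsilon \, dx\, dt
\cdot
\int_0^\infty \!\!\int_\Omega (-H_\varepsilon + f_\varepsilon)^2 |\nabla u_\varepsilon|_\varepsilon \, dx\, dt.
\end{equation*}
The last factor is uniformly bounded by $C_0$ by part~\eqref{lemma3.1(i)}. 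The left-hand side converges to $\bigl|\int\!\int \zeta\, \partial_t u \bigr|^2$ by the weak-$\ast$ convergence \eqref{eq:37}. The middle factor converges to $\int\!\int \zeta^2 |\nabla u|\, dx\, dt$ because $\zeta^2 \in L^1$ and $|\nabla u_\varepsilon|_\varepsilon \rightharpoonup |\nabla u|$ weakly-$\ast$ in $L^\infty$---a consequence of \eqref{eq33} together with the pointwise estimate $\bigl| |\nabla u_\varepsilon|_\varepsilon - |\nabla u_\varepsilon| \bigr| \leq \varepsilon$. Passing to the limit $\varepsilon \to 0$, I obtain
\begin{equation*}
\left| \int_0^\infty \!\!\int_\Omega \zeta\, \partial_t u \, dx\, dt \right|
\leq
\sqrt{C_0}\, \left( \int_0^\infty \!\!\int_\Omega \zeta^2 |\nabla u|\, dx \, dt\right)^{1/2}
\quad \text{for every } \zeta \in C^1_c(\Omega \times (0,\infty)).
\end{equation*}

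This inequality states precisely that the linear functional $\zeta \mapsto \int\!\int \zeta\, \partial_t u \, dx\, dt$ is continuous with respect to the $L^2(|\nabla u|\, dx \, dt)$ seminorm on $C_c^1$. Since smooth compactly supported functions are dense in $L^2$ of this Radon measure, the Riesz representation theorem supplies $g \in L^2(|\nabla u|\, dx\, dt)$ with $\partial_t u = g\, |\nabla u|$ almost everywhere in $\Omega \times (0,\infty)$, and in particular $\partial_t u$ vanishes a.e.\ on $\{|\nabla u| = 0\}$, as claimed. The only delicate step is the passage to the limit in the middle factor, which rests crucially on the strict weak-$\ast$ convergence $|\nabla u_\varepsilon|_\varepsilon \rightharpoonup |\nabla u|$ supplied by Theorem~\ref{thm:2.7}; mere lower semi-continuity, which is all that holds for generic weak limits of area densities, would give the inequality in the wrong direction.
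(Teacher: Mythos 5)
Your proof of part~\eqref{lemma3.1(i)} matches the paper's one-line appeal to the gradient-flow identity~\eqref{eq:EDI}, and your proof of part~\eqref{lemma3.1(ii)} is correct but packages the conclusion differently. Both you and the paper rest on the same two ingredients: Cauchy--Schwarz combined with the uniform dissipation bound from~\eqref{lemma3.1(i)}, and the strict weak-$\ast$ convergence $|\nabla u_{\varepsilon}| \rightharpoonup |\nabla u|$ (a corollary of Theorem~\ref{thm:2.7}; you are right that lower semicontinuity alone would fail). The paper, however, applies this Cauchy--Schwarz argument directly with the test function $\mathbf{1}_A$ for an arbitrary measurable $A \subset \{|\nabla u|=0\}$---which is a legitimate test function against the weak-$\ast$ $L^\infty$ convergences, since $\mathbf{1}_A \in L^1(\Omega_T)$---and reads off $\int_A \partial_t u\,dx\,dt = 0$ immediately because $\int_A |\nabla u|\,dx\,dt = 0$ by the very choice of $A$. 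You instead test against $\zeta \in C^1_c$, obtain the $L^2(|\nabla u|\,dx\,dt)$ bound on the functional $\zeta \mapsto \int\!\int \zeta\,\partial_t u$, and then pass through density of $C_c$ in $L^2$ of the Radon measure $|\nabla u|\,dx\,dt$ plus the Riesz representation theorem to produce $g$ with $\partial_t u = g|\nabla u|$ a.e. Your route is valid and has the pleasant side effect of delivering the representation $\partial_t u = V|\nabla u|$ (essentially Lemma~\ref{lem:3.2}\eqref{lem:3.2(i)} in the paper) as a byproduct rather than by the paper's explicit ansatz~\eqref{velocity}; the paper's route is more elementary, avoiding Riesz and density altogether by choosing the indicator test function that isolates the critical set directly. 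Either is a legitimate proof of the claim.
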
 
\begin{proof}
With the same arguments as in \eqref{eq:monotone}, we obtain~\eqref{lemma3.1(i)}. 
Let $A \subset \Omega \times [0,T]$ be any measurable subset of 
$\{ (x,t) \in \Omega \times (0,\infty) \mid  |\nabla u (x,t)|=0 \}$.
We compute that
\begin{equation}\label{eq:39}
\begin{split}
\left| \int_{A} \partial _t u \, dxdt \right|
= & \, 
\left| \lim_{\varepsilon \to 0} \int_{A} ( -H_\varepsilon + f_\varepsilon) |\nabla u_\varepsilon|_\varepsilon \, dxdt \right| \\
\leq & \, \limsup_{\varepsilon \to 0} 
\left( \int_A ( -H_\varepsilon + f_\varepsilon)^2 |\nabla u_\varepsilon|_\varepsilon 
\, dx dt \right)^{\frac12}
\left( \int_A  |\nabla u_\varepsilon|_\varepsilon 
\, dx dt \right)^{\frac12} \\
\leq & \, 
C_0 ^{\frac12} \limsup _{\varepsilon \to 0} \left( \int_A  |\nabla u_\varepsilon| 
\, dx dt \right)^{\frac12} 
=
C_0 ^{\frac12} \left( \int_A  |\nabla u| 
\, dx dt \right)^{\frac12} =0,
\end{split}
\end{equation}
where we used~\eqref{lemma3.1(i)}, $|\nabla u_\varepsilon|_\varepsilon \leq |\nabla u_\varepsilon| +\varepsilon$, the boundedness of $A$, and \eqref{eq33}.
Hence we obtain~\eqref{lemma3.1(ii)}.
\end{proof}

Set 
\begin{equation}\label{velocity}
V := \begin{cases}
\dfrac{ \partial_t u }{|\nabla u |} & \text{if} \ |\nabla u|>0,\\
0 & \text{if} \ |\nabla u| =0.
\end{cases}
\end{equation}

\begin{lemma}\label{lem:3.2}
\begin{enumerate}[(i)]
\item \label{lem:3.2(i)}$\partial _t u = V|\nabla u|$ a.e.\ on $\Omega \times (0,\infty)$.
In addition, 
\begin{equation}\label{eq:40}
( -H_\varepsilon + f_\varepsilon) |\nabla u_\varepsilon|_\varepsilon
\rightharpoonup V|\nabla u|
\qquad 
\text{weakly-}\ast \ \text{in} \ L^\infty (\Omega \times (0,\infty)).
\end{equation}
\item\label{lem:3.2(ii)} For any $t_1, t_2 \in [0,\infty)$ with $t_1 <t_2$, we have
\begin{equation}\label{energy-ineq}
\int_{\Omega} |\nabla u| \, dx
\Big | _{t=t_2}
+
\int_{t_1} ^{t_2} \int_\Omega V^2 |\nabla u| \, dxdt \leq 
\int_{\Omega} |\nabla u| \, dx
\Big | _{t=t_1}.
\end{equation}
\end{enumerate}
\end{lemma}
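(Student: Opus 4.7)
\emph{Part (i).} The pointwise identity $\partial_t u = V|\nabla u|$ a.e.\ is immediate from the definition~\eqref{velocity}: on $\{|\nabla u|>0\}$ it is the defining relation, while on $\{|\nabla u|=0\}$ both sides vanish, the left-hand side by Lemma~\ref{lemma3.1}\eqref{lemma3.1(ii)} and the right-hand side by the convention $V:=0$. Granted this identity, the weak-$\ast$ convergence~\eqref{eq:40} is just a rewriting of~\eqref{eq:37}, since $(-H_\varepsilon+f_\varepsilon)|\nabla u_\varepsilon|_\varepsilon = \partial_t u_\varepsilon$ by~\eqref{mcf}.

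\emph{Part (ii).} The plan is to pass to the $\liminf_{\varepsilon\to 0}$ in the equality~\eqref{eq:38} from Lemma~\ref{lemma3.1}\eqref{lemma3.1(i)}. For the two ``area'' terms at the endpoints, Theorem~\ref{thm:2.5} together with the corollary following Theorem~\ref{thm:2.7} yields, for $j=1,2$,
\[
    \int_\Omega |\nabla u_\varepsilon|_\varepsilon + V_\varepsilon(u_\varepsilon)\,dx\Big|_{t=t_j} \longrightarrow \int_\Omega |\nabla u(\cdot,t_j)|\,dx.
\]
Thus~\eqref{energy-ineq} reduces to the lower-semicontinuity
\[
    \liminf_{\varepsilon\to 0}\int_{t_1}^{t_2}\!\!\int_\Omega(-H_\varepsilon+f_\varepsilon)^2|\nabla u_\varepsilon|_\varepsilon\,dxdt \;\geq\; \int_{t_1}^{t_2}\!\!\int_\Omega V^2|\nabla u|\,dxdt.
\]

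\emph{The key step via Fenchel-type duality.} For any bounded $\zeta\in C_c^\infty((t_1,t_2)\times\Omega)$ the elementary inequality $a^2\geq 2ab-b^2$ gives, pointwise,
\[
    (-H_\varepsilon+f_\varepsilon)^2|\nabla u_\varepsilon|_\varepsilon \;\geq\; 2\zeta(-H_\varepsilon+f_\varepsilon)|\nabla u_\varepsilon|_\varepsilon - \zeta^2|\nabla u_\varepsilon|_\varepsilon.
\]
Integrating and passing to the limit, the first term on the right converges to $\int\!\int 2\zeta V|\nabla u|\,dxdt$ by~\eqref{eq:40}, while the second converges to $\int\!\int \zeta^2|\nabla u|\,dxdt$ by~\eqref{eq33} together with $\big||\nabla u_\varepsilon|_\varepsilon-|\nabla u_\varepsilon|\big|\leq\varepsilon$; both are legitimate pairings of weak-$\ast$-$L^\infty$ limits with the fixed $L^1$ function $\zeta$ (resp.\ $\zeta^2$). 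Hence
\[
    \liminf_{\varepsilon\to 0}\int_{t_1}^{t_2}\!\!\int_\Omega(-H_\varepsilon+f_\varepsilon)^2|\nabla u_\varepsilon|_\varepsilon\,dxdt \;\geq\; \int_{t_1}^{t_2}\!\!\int_\Omega\bigl(2\zeta V-\zeta^2\bigr)|\nabla u|\,dxdt.
\]
Taking the supremum over all bounded smooth $\zeta$ and invoking $a^2=\sup_b(2ab-b^2)$ recovers $\int V^2|\nabla u|\,dxdt$ on the right.

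\emph{Main obstacle.} The subtle point is the final supremum: a priori $V$ is only measurable with respect to $|\nabla u|\,dxdt$, and its $L^2$-integrability is itself part of the conclusion. The Fenchel-type duality is designed precisely to produce both statements simultaneously: I truncate $V_k:=V\,\mathbf{1}_{\{|V|\leq k\}\cap\{|\nabla u|\geq 1/k\}}$, approximate each bounded $V_k$ by smooth compactly supported $\zeta$ in $L^2(|\nabla u|\,dxdt)$, and let $k\to\infty$ by monotone convergence. The uniform bound $C_0$ from Lemma~\ref{lemma3.1}\eqref{lemma3.1(i)} keeps the liminf finite throughout, so the limiting inequality simultaneously delivers $V\in L^2(|\nabla u|\,dxdt)$ and the desired bound, completing~\eqref{energy-ineq}.
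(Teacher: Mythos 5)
Your argument is correct and follows essentially the same strategy as the paper: pass to the $\liminf$ in the $\varepsilon$-level energy identity~\eqref{eq:38}, use Theorem~\ref{thm:2.5} and its corollary for the endpoint area terms, and establish lower semicontinuity of the dissipation via $L^2(\mu)$-duality with $\mu = |\nabla u|\,dx\,dt$. The paper writes the duality as Cauchy--Schwarz followed by a supremum over $\eta\in C_c$ with $\|\eta\|_{L^2(\mu)}\leq 1$; you write it as Young's inequality $a^2\geq 2ab-b^2$ followed by a supremum over $\zeta$ — this is the same duality in different clothing, and part~(i) is identical to the paper.

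One wrinkle in your \emph{Main obstacle} paragraph: if you take $\zeta_j\to V_k$ only in $L^2(\mu)$, then passing to the limit in $\int 2\zeta_j V\,d\mu$ requires $V\in L^2(\mu)$ (Cauchy--Schwarz), which is precisely what is being proved, so the argument as phrased is circular. This is easily repaired in either of two ways: (a) choose the approximants $\zeta_j$ to be uniformly bounded and $\mu$-a.e.\ convergent, so that dominated convergence with dominator $\sup_j\|\zeta_j\|_\infty\,|V|\in L^1(\mu)$ applies (note $V\in L^1(\mu)$ since $V|\nabla u|=\partial_t u\in L^\infty$ and $\Omega\times(t_1,t_2)$ has finite measure); or (b) — simpler — observe that the weak-$\ast$ pairings in~\eqref{eq:40} and~\eqref{eq33} are valid against any $L^1(\Omega\times(t_1,t_2))$ test function, in particular against the bounded measurable $V_k\mathbf{1}_{(t_1,t_2)}$ itself, so the smooth approximation can be skipped entirely. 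With this repair, letting $k\to\infty$ by monotone convergence delivers both $V\in L^2(\mu)$ and~\eqref{energy-ineq}, as you intended.
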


\begin{proof}
\emph{Argument for~\eqref{lem:3.2(i)}.}
By the definition of $V$ and Lemma \ref{lemma3.1}\eqref{lemma3.1(ii)}, 
$\partial _ t u = V =0$ a.e.\ on $\{ |\nabla u| =0\}$. Thus $\partial _t u = V|\nabla u|$ a.e.\ on $\R^d \times (0,\infty)$.
By this and \eqref{eq:37}, we have \eqref{eq:40}.

\emph{Argument for~\eqref{lem:3.2(ii)}.}
Set $L= \max \{  
    \max_{x \in \Omega} |\nabla \phi (x)|,
    \max_{x \in \Omega} |\nabla \psi (x)|,    
    \max_{x \in \Omega } |\nabla g (x)| \}
$ and define $\mu (A) = \int_A |\nabla u| \, dxdt$ for any 
Borel set $A \subset \Omega \times [0,\infty)$. 
Since 
$   
    | \nabla u|
    \leq L
$ for a.e.\ $(x,t)$, $\mu$ is a Radon measure on $\Omega \times [0,\infty)$.
For any $\eta \in C_c (\Omega \times (0,\infty))$, we compute that
\begin{equation}
\begin{split}
& \left| 
\int_{t_1} ^{t_2} \int_{\Omega} \eta V |\nabla u| \, dxdt 
\right|
=
\lim_{\varepsilon \to 0} \left| 
\int_ {t_1} ^{t_2} \int_{\Omega} \eta (-H_\varepsilon + f_\varepsilon ) |\nabla u_\varepsilon |_\varepsilon \, dxdt
\right| \\
\leq & \, 
\liminf_{\varepsilon \to 0} 
\left( \int_{t_1} ^{t_2} \int_{\Omega} ( -H_\varepsilon + f_\varepsilon)^2 |\nabla u_\varepsilon|_\varepsilon 
\, dx dt \right)^{\frac12}
\left( \int_{t_1} ^{t_2} \int_{\Omega} \eta^2 |\nabla u_\varepsilon|_\varepsilon 
\, dx dt \right)^{\frac12} \\
\leq & \,  
\liminf_{\varepsilon \to 0} 
\left( \int_{t_1} ^{t_2} \int_{\Omega} ( -H_\varepsilon + f_\varepsilon)^2 |\nabla u_\varepsilon|_\varepsilon 
\, dx dt \right)^{\frac12}
\left( \int_{t_1} ^{t_2} \int_\Omega \eta ^2 |\nabla u| \, dx dt
\right)^{\frac12},
\end{split}
\end{equation}
where we used \eqref{eq33} and \eqref{eq:38}. Hence we obtain
\[
\int_{t_1} ^{t_2}
\int_\Omega V^2 |\nabla u| \, dxdt
\leq  
\liminf_{\varepsilon \to 0} 
\left( \int_{t_1} ^{t_2} \int_{\Omega} ( -H_\varepsilon + f_\varepsilon)^2 |\nabla u_\varepsilon|_\varepsilon 
\, dx dt \right)^{\frac12}.
\]
By this, Theorem \ref{thm:2.5}, and \eqref{energy-ineq}, we obtain~\eqref{lem:3.2(ii)}.
\end{proof}

Let $T>0$, $\delta >0$, and
\[
A_ + := \{ (x,t) \in \Omega \times [0,T] \mid u(x,t) = \psi (x) \}
\] 
and 
\[
A_+^\delta := \{ (x,t) \in \Omega \times [0,T] \mid \dist ((x,t), A_+) \leq \delta \},
\qquad B_+ ^\delta = \Omega \times [0,T] \setminus A_+^\delta.
\]
Similarly, we define
\[
A_ - := \{ (x,t) \in \Omega \times [0,T] \mid u(x,t) = \phi (x) \}
\] 
and 
\[
A_-^\delta := \{ (x,t) \in \Omega \times [0,T] \mid \dist ((x,t), A_-) \leq \delta \},
\qquad B_- ^\delta = \Omega \times [0,T] \setminus A_- ^\delta.
\]
Since $u$ is uniformly Lipschitz and $\phi (x) < \psi (x)$ for any $x \in \Omega$,
$\dist (A_+,A_-)>0$ and there exists $\delta_0 >0$ such that $A_+ ^\delta \cap A_- ^\delta =\emptyset$ for any $\delta \in (0,\delta_0)$.

\begin{lemma}\label{lemma3.3}
\begin{enumerate}[(i)]
\item \label{item:lemma3.3(i)} For any $\delta >0$ there exist $\varepsilon_0 >0$ and $\alpha>0$ such that
\begin{equation}\label{lemma3.3(i)}
u_\varepsilon (x,t) - \phi (x) \geq \alpha \quad \text{for any} \ (x,t) \in B_ - ^\delta
\quad \text{and} \quad
\psi (x) - u_\varepsilon (x,t) \geq \alpha \quad \text{for any} \ (x,t) \in B_ + ^\delta
\end{equation}
for any $\varepsilon \in (0,\varepsilon_0)$.
\item\label{item:lemma3.3(ii)} Let $\delta >0$ and $X \in C^1 (\Omega \times [0,T];\R^d)$ satisfy
\begin{equation}
    X \cdot \nabla \phi \leq 0 \quad \text{on} \ A_ - ^\delta
    \quad \text{and} \quad
    X \cdot \nabla \psi \geq 0 \quad \text{on} \ A_ + ^\delta
\end{equation}
Then for a.e.\ $t \in [0,T]$ we have
\begin{equation}\label{BVineq}
\int_{\Omega} \left\{
 \left( \delta_{ij} - \frac{ \partial_{x_i} u \partial _{x_j} u }{|\nabla u|^2} \right)
\partial_{x_j} X^i
+V\Big(\frac{-\nabla u}{|\nabla u|}\Big)  \cdot X\right\} |\nabla u| \, dx
\geq 0.
\end{equation}
\end{enumerate}
\end{lemma}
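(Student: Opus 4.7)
The first claim~\eqref{lemma3.3(i)} I would dispatch by a direct compactness argument. Since $u$ and $\phi$ are continuous and $\overline{B_-^\delta}$ is closed in the compact space $\Omega \times [0,T]$ and bounded away from $A_- = \{u=\phi\}$, the function $u - \phi$ attains a positive minimum $2\alpha > 0$ on $\overline{B_-^\delta}$. Uniform convergence $u_\varepsilon \to u$ (Theorem~\ref{thm:2.5}) then yields $u_\varepsilon - \phi \geq \alpha$ on $B_-^\delta$ for $\varepsilon$ sufficiently small, and the bound involving $\psi$ is symmetric.

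For part~\eqref{item:lemma3.3(ii)}, my plan is to derive a sharp identity at the level $\varepsilon > 0$ and then pass to the limit. Multiplying the PDE $\partial_t u_\varepsilon = |\nabla u_\varepsilon|_\varepsilon(-H_\varepsilon + f_\varepsilon)$ by $X \cdot \nabla u_\varepsilon / |\nabla u_\varepsilon|_\varepsilon = -\nu_\varepsilon \cdot X$ and integrating over $\Omega$, one integration by parts on the curvature term (using $\partial_{x_i} |\nabla u_\varepsilon|_\varepsilon = \partial_{x_k} u_\varepsilon\, \partial_{x_i x_k} u_\varepsilon / |\nabla u_\varepsilon|_\varepsilon$) produces the exact identity
\[
\int_\Omega a^\varepsilon_{ij}\,\partial_{x_j} X^i\, |\nabla u_\varepsilon|_\varepsilon\,dx + \int_\Omega \partial_t u_\varepsilon\,(\nu_\varepsilon \cdot X)\,dx + \int_\Omega f_\varepsilon\, X \cdot \nabla u_\varepsilon\,dx = 0.
\]
I will then multiply by any $\chi \in C_c^\infty((0,T))$ with $\chi \geq 0$, integrate in time, and pass to the limit $\varepsilon \to 0$ term by term.

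The main obstacle is showing $\limsup_{\varepsilon \to 0} \int_\Omega f_\varepsilon\, X \cdot \nabla u_\varepsilon\,dx \leq 0$, and this is exactly where the admissibility of $X$ enters. Setting $w_- := (\phi - u_\varepsilon)_+$, part~\eqref{item:lemma3.3(i)} places its support inside $A_-^\delta$ for $\varepsilon$ small. On this support the chain rule gives $w_-^3 \nabla u_\varepsilon = w_-^3 \nabla \phi - \tfrac{1}{4}\nabla (w_-^4)$, so integration by parts on the exact-derivative piece combined with $X \cdot \nabla \phi \leq 0$ on $A_-^\delta$ yields
\[
\frac{4}{\varepsilon} \int_\Omega w_-^3\, X \cdot \nabla u_\varepsilon\,dx \leq \frac{1}{\varepsilon} \int_\Omega w_-^4\, \divergence X\,dx \leq \|\divergence X\|_\infty \int_\Omega V_\varepsilon(u_\varepsilon)\,dx,
\]
which vanishes as $\varepsilon \to 0$ by the Corollary following Theorem~\ref{thm:2.7}. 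The symmetric computation with $w_+ := (u_\varepsilon - \psi)_+$ and $X \cdot \nabla \psi \geq 0$ on $A_+^\delta$ disposes of the other half of $f_\varepsilon$.

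It then remains to pass to the limit in the first two terms. For the area term, $|\nabla u_\varepsilon|_\varepsilon \rightharpoonup |\nabla u|$ weakly-$\ast$ (Theorem~\ref{thm:2.7}) combined with the strong $L^2$ convergence $\nu_\varepsilon \to \nu = -\nabla u/|\nabla u|$ on $\{|\nabla u| > 0\}$ (Theorem~\ref{thm:2.9}) handles that set, while on $\{|\nabla u|=0\}$ the contribution is negligible because $\int_{\{|\nabla u|=0\}} |\nabla u_\varepsilon|_\varepsilon\,dx \to 0$ by weak-$\ast$ convergence tested against the indicator. For the velocity term, the strong/weak pairing of $\nu_\varepsilon \to \nu$ with the weakly-$\ast$ convergent $\partial_t u_\varepsilon \rightharpoonup V|\nabla u|$ (Lemma~\ref{lem:3.2}) handles $\{|\nabla u|>0\}$, and a Cauchy--Schwarz bound using the energy dissipation~\eqref{eq:38} together with the same vanishing estimate controls the degenerate set (where $\partial_t u$ itself vanishes by Lemma~\ref{lemma3.1}\eqref{lemma3.1(ii)}). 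Combining these limits delivers the integrated-in-time inequality for any $\chi \geq 0$, from which~\eqref{BVineq} follows for almost every $t \in [0,T]$ by letting $\chi$ concentrate at a Lebesgue point.
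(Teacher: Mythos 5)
The proposal is correct and matches the paper's proof essentially line for line: part (i) is the same compactness argument, and for part (ii) your identity is the rearrangement of the paper's decomposition $I_1 = I_2 + I_3$, your integration by parts on the forcing term (splitting off the exact derivative of $(\phi-u_\varepsilon)_+^4$, resp.\ $(u_\varepsilon-\psi)_+^4$, and dropping the sign-definite $X\cdot\nabla\phi$, resp.\ $X\cdot\nabla\psi$, term with support in $A_\pm^\delta$) reproduces the paper's treatment of $J_1, J_2$, and your handling of the limits on $\{|\nabla u|>0\}$ versus $\{|\nabla u|=0\}$ via Theorems~\ref{thm:2.7},~\ref{thm:2.9}, Lemma~\ref{lem:3.2}, and Cauchy--Schwarz against~\eqref{eq:38} is the same as the paper's use of~\eqref{eq33},~\eqref{eq:36},~\eqref{eq:40}, and the estimate~\eqref{eq:39}. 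The only cosmetic differences are that you test in time against non-negative $\chi\in C_c^\infty$ rather than non-negative $F\in L^\infty(0,T)$, and that you should cite the time-integrated convergences~\eqref{eq33} and~\eqref{eq:36} rather than the fixed-time statements since the limits are taken jointly in space-time.
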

\begin{proof}
First we prove~\eqref{item:lemma3.3(i)}. Since $\overline{B_+ ^\delta}$ is compact, there exists a point $(x_1,t_1) \in \overline{B_+ ^\delta}$ such that
\[
2\alpha := \min _{(x,t) \in \overline{B_+ ^\delta}} (\psi (x) - u (x,t))
=\psi (x_1) - u (x_1,t_1).
\]
One can easily check that $\alpha>0$. Therefore $\psi (x) - u_\varepsilon (x,t) \geq \alpha \quad \text{for any} \ (x,t) \in B_ + ^\delta$ for sufficiently small $\varepsilon>0$
because $u_\varepsilon$ converges to $u$ on $\Omega \times [0,T]$ uniformly.
The rest of the claim of~\eqref{item:lemma3.3(i)} can be shown in the same way.

Next we prove~\eqref{item:lemma3.3(ii)}. We compute that
\begin{equation}
\begin{split}
I_1 := & \int_{\Omega} X\cdot \nabla u_\varepsilon
(H_\varepsilon -f_\varepsilon ) \, dx \\
= & \,
-\int_{\Omega} X\cdot \left(\frac{-\nabla u_\varepsilon}{|\nabla u_\varepsilon |_\varepsilon}\right)
H_\varepsilon |\nabla u_\varepsilon|_\varepsilon \, dx
-
\int_{\Omega} X\cdot \frac{\nabla u_\varepsilon}{|\nabla u_\varepsilon |_\varepsilon}
f_\varepsilon |\nabla u_\varepsilon|_\varepsilon \, dx
\\
=: & \, I_2 +I_3.
\end{split}
\end{equation}
Let $F \in L^\infty (0,T)$ be a non-negative function. First we claim that
\begin{equation}\label{I1}
\int_0 ^T F I_1 \, dt \to \int_0 ^T F
\int_{\Omega} V \Big(\frac{-\nabla u}{|\nabla u|}\Big) \cdot X |\nabla u| \, dx dt 
\end{equation}
and 
\begin{equation}\label{I2}
\int_0 ^T F I_2 \, dt \to -\int_0 ^T F
\int_{\Omega} \left( \delta_{ij} - \frac{ \partial_{x_i} u \partial _{x_j} u }{|\nabla u|^2} \right)
\partial_{x_j} X^i |\nabla u| \, dx dt 
\end{equation}
as $\varepsilon \to 0$.
By \eqref{eq:36} and \eqref{eq:40}, we have
\begin{equation*}
\begin{split}
&\int_0 ^T F \int _{\{ x \in \Omega \mid |\nabla u (x,t)|>0 \}} X \cdot \nabla u_\varepsilon (H_\varepsilon - f_\varepsilon )  \, dx dt\\
=& \, 
\int_0 ^T F \int _{\{ x \in \Omega \mid |\nabla u (x,t)|>0 \}} 
X \cdot \frac{-\nabla u_\varepsilon }{|\nabla u_\varepsilon |_\varepsilon}
(-H_\varepsilon + f_\varepsilon ) |\nabla u_\varepsilon |_\varepsilon \, dx dt \\
\to & \,  
\int_0 ^T F \int _{\{ x \in \Omega \mid |\nabla u (x,t)|>0 \}} X \cdot \left( \frac{-\nabla u}{|\nabla u|} \right)
V |\nabla u| \, dx dt
=
\int_0 ^T F \int _{\Omega} X \cdot \left( \frac{-\nabla u}{|\nabla u|}\right)
V |\nabla u| \, dx dt \qquad \text{as} \ \varepsilon \to 0.
\end{split}
\end{equation*}
Note that $V=0$ a.e.\ on $\{ |\nabla u|=0 \}$.
By a computation similar to \eqref{eq:39}, one can check that
\[
\left|
\int_0 ^T F \int _{\{ x \in \Omega \mid |\nabla u (x,t)|=0 \}} X \cdot \nabla u_\varepsilon (H_\varepsilon - f_\varepsilon )  \, dx dt
\right| \to 0 \qquad \text{as} \ \varepsilon \to 0.
\]
Therefore we have \eqref{I1}.

By integration by parts, 
\begin{equation}\label{eq:49}
\begin{split}
I_2 =& \, -\int_{\Omega} X^i \partial_{x_i} u_\varepsilon
\partial_{x_j} \left( \frac{\partial_{x_j} u_\varepsilon }{|\nabla u_\varepsilon|_\varepsilon} \right)   \, dx 
= 
\int_{\Omega} (\partial_{x_j} X^i \partial_{x_i} u_\varepsilon + 
X^i \partial_{x_j} \partial_{x_i} u_\varepsilon)
 \frac{\partial_{x_j} u_\varepsilon }{|\nabla u_\varepsilon|_\varepsilon}  \, dx \\
=& \,
\int_{\Omega} 
\partial_{x_j} X^i 
\frac{\partial_{x_i} u_\varepsilon \partial_{x_j} u_\varepsilon }{|\nabla u_\varepsilon|_\varepsilon}
+ 
X^i \partial_{x_i} |\nabla u_\varepsilon|_\varepsilon  \, dx 
=
\int_{\Omega} 
\partial_{x_j} X^i 
\frac{\partial_{x_i} u_\varepsilon \partial_{x_j} u_\varepsilon }{|\nabla u_\varepsilon|_\varepsilon} 
-
\partial_{x_i} X^i  |\nabla u_\varepsilon|_\varepsilon  \, dx \\
=& \, 
-\int_{\Omega} \left( \delta_{ij} -  \frac{\partial_{x_i} u_\varepsilon \partial_{x_j} u_\varepsilon }{|\nabla u_\varepsilon|_\varepsilon ^2} \right) \partial _{x_j} X^i |\nabla u_\varepsilon|_\varepsilon \, dx.
\end{split}
\end{equation}
By \eqref{eq:36}, \eqref{eq:40}, \eqref{eq:49}, and a computation similar to \eqref{eq:39},
we have
\begin{equation}
\begin{split}
\int _0 ^T F I_2 \, dt
=& 
-\int_0 ^T F \int_{\{ x \in \Omega \mid |\nabla u (x,t)|>0 \}} \left( \delta_{ij} -  \frac{\partial_{x_i} u_\varepsilon \partial_{x_j} u_\varepsilon }{|\nabla u_\varepsilon|_\varepsilon ^2} \right) \partial _{x_j} X^i |\nabla u_\varepsilon|_\varepsilon \, dx dt\\
&- 
\int_0 ^T F \int_{\{ x \in \Omega \mid |\nabla u (x,t)| = 0 \}} \left( \delta_{ij} -  \frac{\partial_{x_i} u_\varepsilon \partial_{x_j} u_\varepsilon }{|\nabla u_\varepsilon|_\varepsilon ^2} \right) \partial _{x_j} X^i |\nabla u_\varepsilon|_\varepsilon \, dx dt \\
\to &  
-\int_0 ^T F
\int_{\Omega} \left( \delta_{ij} - \frac{ \partial_{x_i} u \partial _{x_j} u }{|\nabla u|^2} \right)
\partial_{x_j} X^i |\nabla u| \, dx dt \qquad \text{as} \ \varepsilon \to 0.
\end{split}
\end{equation}
Hence we obtain \eqref{I2}.

Next we prove 
\begin{equation}\label{I3}
\liminf_{\varepsilon \to 0} \int _0 ^T F I_3 \, dt =
\liminf_{\varepsilon \to 0} \int _0 ^T F\int_{\Omega} (-X \cdot \nabla u_\varepsilon f_\varepsilon  ) \, dx dt \geq 0.
\end{equation}
We compute that
\begin{equation}
\begin{split}
&-\int_{\Omega} X \cdot \nabla u_\varepsilon f_\varepsilon   \, dx\\
=& \, \int_\Omega X \cdot \nabla u_\varepsilon \frac{4}{\varepsilon} ((u_\varepsilon -\psi)_+)^3 \, dx - 
\int_\Omega X \cdot \nabla u_\varepsilon \frac{4}{\varepsilon} ((\phi -u_\varepsilon)_+)^3 \, dx 
= :  J_1 + J_2.
\end{split}
\end{equation}
We only prove $\liminf_{\varepsilon \to 0} \int_0 ^T J_1 \, dt \geq 0$
because
$\liminf_{\varepsilon \to 0} \int_0 ^T J_2 \, dt \geq 0$ can be shown in the same way. By integration by parts, we have
\begin{equation*}
\begin{split}
J_1 = & \, \int_\Omega X \cdot \nabla (u_\varepsilon-\psi) \frac{4}{\varepsilon} ((u_\varepsilon -\psi)_+)^3 \, dx
+\int_\Omega X \cdot \nabla \psi \frac{4}{\varepsilon} ((u_\varepsilon -\psi)_+)^3 \, dx \\
= & \, 
-\int_\Omega \divergence X \frac{1}{\varepsilon} ((u_\varepsilon -\psi)_+)^4 \, dx
+\int_\Omega X \cdot \nabla \psi \frac{4}{\varepsilon} ((u_\varepsilon -\psi)_+)^3 \, dx.
\end{split}
\end{equation*}
Since
\[
\left|\int_\Omega \divergence X \frac{1}{\varepsilon} ((u_\varepsilon -\psi)_+)^4 \, dx\right|
\leq \| \divergence X \|_{L^\infty (\Omega \times [0,T])} \int_{\Omega} V_\varepsilon (u_\varepsilon) \, dx \to 0
\]
and $\sup_{t \in [0,T] } \int_\Omega V_\varepsilon (u_\varepsilon (x,t)) \, dx <\infty$,
\begin{equation}\label{J1-1}
\int_0 ^T F \int_\Omega \divergence X \frac{1}{\varepsilon} ((u_\varepsilon -\psi)_+)^4 \, dx dt
\to 0 \qquad \text{as} \ \varepsilon \to 0
\end{equation}
by the dominated convergence theorem. We compute that
\begin{equation}\label{J1-2}
\begin{split}
&\int_\Omega X \cdot \nabla \psi \frac{4}{\varepsilon} ((u_\varepsilon -\psi)_+)^3 \, dx \\
=& \, \int_{A_+ ^\delta} X \cdot \nabla \psi \frac{4}{\varepsilon} ((u_\varepsilon -\psi)_+)^3 \, dx
+ \int_{B_+ ^\delta} X \cdot \nabla \psi \frac{4}{\varepsilon} ((u_\varepsilon -\psi)_+)^3 \, dx
\geq 0
\end{split}
\end{equation}
holds for sufficiently small $\varepsilon >0$.
Here we used $X \cdot \nabla \psi \geq 0$ on $A_ + ^\delta$ and \eqref{lemma3.3(i)}.
By \eqref{J1-1} and \eqref{J1-2} we obtain \eqref{I3}.
By \eqref{I1}, \eqref{I2}, and \eqref{I3}, we have \eqref{BVineq}.
\end{proof}

\begin{theorem}\label{thm:3.4}
Suppose $\phi, \psi, g$ are well-prepared according to Definition~\ref{def:well-prepared} and let $X \in C^1 (\Omega \times [0,T])$ satisfy
    \[
    X \cdot \nabla \phi \leq 0 \quad \text{on} \ A_-
    \quad
    \text{and}
    \quad
    X\cdot \nabla \psi \geq 0 \quad \text{on} \ A_+.
    \]
Then \eqref{BVineq} holds.
\end{theorem}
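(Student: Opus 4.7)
The strategy is to approximate $X$ by $C^1$ vector fields $X_{r,\eta,\delta}$ that satisfy the stronger neighborhood constraints of Lemma~\ref{lemma3.3}\eqref{item:lemma3.3(ii)}, apply that lemma to each approximation, and pass to the limit in the resulting inequality. The main obstacle is the presence of critical points of $\psi$ on $A_+$ (and of $\phi$ on $A_-$): at such a point $y$ one has $\nabla\psi(y)=0$, so no perturbation of $X$ in the direction of $\nabla\psi$ can strictly enforce $X\cdot\nabla\psi\ge0$ on a full neighborhood of $y$. Well-preparedness is tailored for exactly this difficulty: since Proposition~\ref{prop:mp}\eqref{item:mp1} gives $|u|\le L$, we have $\psi=u\le L$ on $A_+$, so the critical points of $\psi$ on $A_+$ lie in the finite set from Definition~\ref{def:well-prepared}; analogously for $\phi$. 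Let $K$ denote the (finite) union of these critical points.

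For parameters $r,\eta,\delta>0$, introduce smooth cutoffs $\theta_r:\Omega\to[0,1]$ vanishing on an $r$-neighborhood of $K$ and equal to $1$ outside a $2r$-neighborhood, and $\chi_\delta^+,\chi_\delta^-:\Omega\times[0,T]\to[0,1]$ built from the Lipschitz functions $\psi-u$ and $u-\phi$ so that $\chi_\delta^+\equiv1$ on $A_+^\delta$ and is supported in $\{(x,t):\psi(x)-u(x,t)\le C\delta\}$ (and analogously for $\chi_\delta^-$). Define
\[
X_{r,\eta,\delta}:=\theta_r X+\eta\theta_r\chi_\delta^+\nabla\psi-\eta\theta_r\chi_\delta^-\nabla\phi.
\]
On $A_+^\delta$ the supports of $\chi_\delta^\pm$ are disjoint (thanks to $\dist(A_+,A_-)>0$), so $X_{r,\eta,\delta}\cdot\nabla\psi=\theta_r\bigl(X\cdot\nabla\psi+\eta|\nabla\psi|^2\bigr)$. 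Where $\theta_r=0$ this vanishes; where $\theta_r>0$ we have $|\nabla\psi|\ge c(r)>0$, and uniform continuity of $X\cdot\nabla\psi$ together with $X\cdot\nabla\psi\ge0$ on $A_+$ ensures that for $\delta$ sufficiently small (depending on $r,\eta$), $X_{r,\eta,\delta}\cdot\nabla\psi\ge0$ throughout $A_+^\delta$. The analogous bound holds on $A_-^\delta$, so Lemma~\ref{lemma3.3}\eqref{item:lemma3.3(ii)} delivers inequality \eqref{BVineq} with $X_{r,\eta,\delta}$ in place of $X$.

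It remains to take the limits $\delta\to0$, then $\eta\to0$, then $r\to0$ in the time-integrated form of this inequality. The perturbations have $C^0$-norm $O(\eta)$, and although $\nabla_x\chi_\delta^\pm$ blows up like $1/\delta$ it is concentrated on the thin set $\{0\le\psi-u\le C\delta\}$; moreover $u\le\psi$ forces $\nabla_x(\psi-u)=0$ a.e.\ on $\{\psi=u\}$, so the contribution of $A_+$ itself to $\nabla_x\chi_\delta^+$ vanishes identically. Combining this with the coarea formula and the uniform BV-bound on $u(\cdot,t)$ from Lemma~\ref{lem:3.2}\eqref{lem:3.2(ii)} shows that the $L^1$-contribution of $\nabla_x\chi_\delta^\pm$ to the time-integrated inequality remains bounded as $\delta\to0$; multiplying by $\eta$ then yields an $O(\eta)$ contribution, which vanishes upon sending first $\delta\to0$ and then $\eta\to0$. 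The inequality then becomes \eqref{BVineq} with $\theta_rX$ in place of $X$. Finally $\theta_r\to1$ a.e., and $\int|\nabla\theta_r|\,dx\lesssim r^{d-1}\to0$ (using $d\ge2$), so dominated convergence on the $(I-\nu\otimes\nu)\colon\nabla X$ and $V\nu\cdot X$ terms, together with the vanishing contribution of $(I-\nu\otimes\nu)\colon(X\otimes\nabla\theta_r)|\nabla u|$, produces \eqref{BVineq} for $X$ itself.
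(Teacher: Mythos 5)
The overall strategy matches the paper's: approximate $X$ by vector fields satisfying the stronger neighborhood constraints of Lemma~\ref{lemma3.3}\eqref{item:lemma3.3(ii)}, apply that lemma, then pass to the limit; and your handling of the critical points via the cutoff $\theta_r$ and the bound $\int_\Omega|\nabla\theta_r|\,dx\lesssim r^{d-1}$ is essentially what the paper does. But the way you build the $\delta$-dependent perturbation has a genuine gap, and the paper's construction is designed precisely to avoid it.

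Your perturbation $\eta\theta_r\chi_\delta^+\nabla\psi-\eta\theta_r\chi_\delta^-\nabla\phi$ makes the \emph{support} shrink with $\delta$ while the coefficient $\eta$ stays fixed, so $\nabla_x\chi_\delta^\pm$ has size $O(1/\delta)$. You then assert that the $L^1$-contribution of this term to the integrated inequality ``remains bounded as $\delta\to0$,'' invoking the coarea formula and $\sup_t\int_\Omega|\nabla u(\cdot,t)|\,dx<\infty$. This does not follow. Writing $\chi_\delta^+=\rho((\psi-u)/\delta)$, the dangerous term is bounded by
\[
\frac{C}{\delta}\int_{\{c_1\delta<\psi-u<c_2\delta\}}|\nabla(\psi-u)|\,dx
=\frac{C}{\delta}\int_{c_1\delta}^{c_2\delta}\mathcal H^{d-1}\bigl(\{\psi-u(\cdot,t)=s\}\bigr)\,ds,
\]
a Lebesgue average of $s\mapsto\mathcal H^{d-1}(\{\psi-u=s\})$ near $s=0$. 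The coarea formula only gives $\int_0^\infty\mathcal H^{d-1}(\{\psi-u=s\})\,ds\le\|\nabla(\psi-u)\|_\infty|\Omega|<\infty$, i.e.\ $L^1$-integrability in $s$; it gives no pointwise bound as $s\to0^+$, and for a general Lipschitz $u$ the perimeter of the level sets $\{\psi-u=s\}$ can blow up near the contact value $s=0$. So the claimed uniform-in-$\delta$ bound is unsubstantiated, and the iterated limit $\delta\to0$, then $\eta\to0$ does not close. The observation that $\nabla(\psi-u)=0$ a.e.\ on $\{\psi=u\}$ is correct but irrelevant: the blowup happens on the slab $\{0<\psi-u<C\delta\}$, not on $A_+$ itself.

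There is a secondary, but also real, problem: $X_{r,\eta,\delta}$ is not $C^1$. The cutoffs $\chi_\delta^\pm$ are built from $\psi-u$ and $u-\phi$, which are only Lipschitz in $(x,t)$ since $u$ is only Lipschitz; and $\nabla\psi,\nabla\phi$ are only $C^0$ since $\phi,\psi\in C^1$. So the vector field does not satisfy the $C^1$ hypothesis of Lemma~\ref{lemma3.3}. Smoothing these ingredients would only make the $1/\delta$ issue worse.

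The paper sidesteps both issues by keeping the cutoffs \emph{fixed} and letting a \emph{scalar} coefficient shrink: with $\omega_+(\delta)=\min_{A_+^\delta}(X\cdot\nabla\psi)\le0$, $\omega_-(\delta)=\max_{A_-^\delta}(X\cdot\nabla\phi)\ge0$ (which tend to $0$ by continuity of $X\cdot\nabla\psi$, $X\cdot\nabla\phi$ and the hypothesis on $A_\pm$), and fixed $C^1$ fields $Y_\psi,Y_\phi$ with $\nabla\psi\cdot Y_\psi\ge C_r^2/2$ on $\spt X\cap A_+^\delta$ etc., one sets $X_\delta:=X-\tfrac{2}{C_r^2}\bigl(\omega_+(\delta)\,\eta\,Y_\psi+\omega_-(\delta)(1-\eta)\,Y_\phi\bigr)$. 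Then $X_\delta$ is $C^1$, satisfies the neighborhood constraints, and $X_\delta\to X$ in $C^1(\Omega\times[0,T])$ as $\delta\to0$, so passing to the limit in \eqref{BVineq} is immediate. You should replace your $\delta$-dependent cutoffs $\chi_\delta^\pm$ by this construction; the rest of your proof (the $\theta_r$ step and the $r\to0$ limit via $\int|\nabla\theta_r|\lesssim r^{d-1}$) then goes through as written.
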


\begin{proof}
First we assume that 
\begin{equation}\label{lem3.4:assumption2}
\spt X \cap \left( \cup_{i=1} ^N B_r (x_i) \cup \cup_{i=1} ^M B_r (y_i) \right)
\times [0,T] = \emptyset
\end{equation}
for some $r>0$, 
where 
\[
\{x_1,\dots,x_N\}
=\{ x \in \Omega \mid \psi (x) \leq L+l \quad \text{and} \quad
    |\nabla \psi (x)|=0 \}
\]
and
\[
\{y_1,\dots,y_M \}
=\{ x \in \Omega \mid \phi (x) \geq -(L+l) \quad \text{and} \quad
    |\nabla \phi (x)|=0 \}.
\]
By the assumptions for $\psi$ and $\phi$, one can check that 
for $r>0$, there exists $C_r >0$ such that
\begin{equation}\label{lem3.4-0}
|\nabla \psi | \geq C_r \quad \text{on} \ \{ \psi (x) \leq L+l \} \setminus \cup_{i=1} ^N B_r (x_i)
\quad \text{and} \quad 
|\nabla \phi | \geq C_r \quad \text{on} \ \{ \phi (x) \geq -(L+l) \} \setminus \cup_{i=1} ^M B_r (y_i).
\end{equation}
By the maximum principle (Proposition \ref{prop:mp}) and \eqref{lem3.4:assumption1},
it holds that $\sup_{(x,t) \in \Omega [0,\infty)} |u_\varepsilon (x,t)| \leq L $ for any $\varepsilon >0$. Therefore 
\begin{equation*}
A_+ \subset \{ \psi \leq L \} \times [0,T]
\qquad \text{and} \qquad
A_- \subset \{ \phi \geq -L \} \times [0,T] .
\end{equation*}
Hence for any $\delta \in (0,l)$, 
\begin{equation}\label{lem3.4-1}
A_+ ^\delta \subset \{ \psi \leq L +l \} \times [0,T]
\qquad \text{and} \qquad
A_- ^\delta \subset \{ \phi \geq -(L+l) \} \times [0,T] .
\end{equation}
By \eqref{lem3.4-0} and \eqref{lem3.4-1}, if $(x,t) \in A_+ ^\delta$ and $|\nabla \psi (x)| <C_r$,
then $x \in \cup_{i=1} ^N B_r (x_i)$. Therefore \eqref{lem3.4:assumption2} implies that
\begin{equation}\label{lem3.4-2}
(x,t) \in \spt X \cap A_+^\delta
\quad \Rightarrow \quad
|\nabla \psi (x) | \geq C_r
\quad
(\text{resp.} \ (x,t) \in \spt X \cap A_- ^\delta
\quad \Rightarrow \quad
|\nabla \phi (x) | \geq C_r
)
\end{equation}
Set $\omega _+ (\delta) := \min_{A_+ ^\delta} (X \cdot \nabla \psi) \leq 0$ and
$\omega _- (\delta) := \max_{A_- ^\delta} (X \cdot \nabla \phi) \geq 0$. Note that
\begin{equation}
\omega _{\pm} (\delta) \to 0 \qquad \text{as} \ \delta \to 0,
\end{equation}
by the assumption of $X$. 
Due to \eqref{lem3.4-2}, we can choose $Y_\psi, Y_\phi \in C^1 (\Omega)$
depending only on $C_r$, $\psi$, and $\phi$ such that
\begin{equation}
\nabla \psi \cdot Y_\psi \geq \frac{C_r ^2}{2} \quad \text{on} \ \spt X \cap A_+ ^\delta 
\quad \text{and} \quad
\nabla \phi \cdot Y_\phi \geq \frac{C_r ^2}{2} \quad \text{on} \ \spt X \cap A_- ^\delta
\end{equation}
for any $\delta \in (0,l)$. 
Note that we may assume that
\[
\nabla \psi \cdot Y_\psi \geq 0 \quad \text{on} \ A_+ ^\delta 
\quad \text{and} \quad
\nabla \phi \cdot Y_\phi \geq 0 \quad \text{on} \ A_- ^\delta
\]
by using some suitable cut-off function for $\spt X \cap A_\pm ^\delta$.
Choose $\delta_\ast \in (0,l)$ such that $A_+ ^{2\delta_\ast} \cap A_- ^{2\delta_\ast} =\emptyset$ and let $\eta \in C^1 (\Omega)$ be a cut-off function for $A_+ ^{\delta_\ast}$, namely,
$0\leq \eta (x)\leq 1$ for any $x\in \Omega$,
$\eta =1$ on $A_+ ^{\delta_\ast}$, and $\eta =0$ on $A_- ^{\delta _\ast}$.
Set
\[
X_\delta := X - \frac{2}{C_r ^2} (\omega_+ (\delta) \eta Y_\psi 
+ \omega _- (\delta) (1-\eta) Y_\phi) \in C^1 (\Omega \times [0,T]) \quad \text{for any} \ \delta \in (0,\delta_\ast).
\]
If $(x,t) \in A_+ ^\delta \cap \spt X$, then 
\[
X_\delta \cdot \nabla \psi 
=
\left( X - \frac{2}{C_r ^2} \omega_+ (\delta) Y_\psi \right) \cdot \nabla \psi
\geq 
\omega_+ (\delta) - \frac{2}{C_r ^2} \omega_+ (\delta) \cdot \frac{C_r ^2}{2}\geq 0.
\]
In addition, $X_\delta \cdot \nabla \psi \geq 0$ on $A_+ ^\delta \cap (\spt X)^c$. Hence
$X_\delta \cdot \nabla \psi \geq 0$ on $A_+ ^\delta$ 
(resp. $X_\delta \cdot \nabla \phi \leq 0$ on $A_- ^\delta$)
and thus $X_\delta$ satisfies 
\begin{equation*}
\int_{\Omega} \left\{ 
\left( \delta_{ij} - \frac{ \partial_{x_i} u \partial _{x_j} u }{|\nabla u|^2} \right)
\partial_{x_j} X_\delta ^i 
+
X_\delta \cdot \left(\frac{-\nabla u}{|\nabla u|}\right) V 
\right\}
|\nabla u| \, dx \geq 0
\end{equation*}
for a.e.\ $t \in [0,T]$, by Lemma \ref{lemma3.3}. Since $\omega _\pm (\delta)\to 0$ as 
$\delta \to 0$, $X_\delta \to X$ in $C^1 (\Omega \times [0,T])$. 
Therefore \eqref{BVineq} holds 
for a.e.\ $t \in [0,T]$.

Next we prove \eqref{BVineq} without \eqref{lem3.4:assumption2}.
Let $r>0$ and $\eta_r \in C^1 (\Omega)$ be a cut-off function such that
$\eta_r =1$ on $\Omega \setminus (\cup_{i=1} ^N B_{2r} (x_i) \cup \cup_{i=1} ^M B_{2r} (y_i))$
and $\eta_r=0$ on $\cup_{i=1} ^N B_{r} (x_i) \cup \cup_{i=1} ^M B_{r} (y_i)$.
Then 
\begin{equation}\label{eta-r1}
\int_{\Omega} \left\{
\left( \delta_{ij} - \frac{ \partial_{x_i} u \partial _{x_j} u }{|\nabla u|^2} \right)
\partial_{x_j} (\eta_r X ^i) 
+\eta_r X \cdot \left( \frac{-\nabla u}{|\nabla u|} \right) V
\right\} |\nabla u| \, dx \geq 0
\end{equation}
for a.e.\ $t \in [0,T]$, by the first step. Since $\eta_r \to 1$ a.e.\ $\Omega$ as $r\to 0$, 
for a.e.\ $t \in [0,T]$ we have
\begin{equation}\label{eta-r2}
\int_{\Omega} \eta_r X \cdot \left( \frac{-\nabla u}{|\nabla u|} \right) V |\nabla u| \, dx
\to
\int_{\Omega} X \cdot \left(\frac{-\nabla u}{|\nabla u|} \right) V |\nabla u| \, dx \qquad \text{as} \ r\to 0
\end{equation}
by the dominated convergence theorem. Note that $| V| |\nabla u| =|\partial _t u | $ a.e.\ $(x,t)$ and $\| \partial _t u (\cdot,t) \|_{L^\infty (\Omega)} <\infty$ for a.e.\ $t \geq 0$. 
We compute that
\begin{equation*}
\begin{split}
\int_{\Omega} |\nabla \eta_r | \, dx
= & \, \int_0 ^1 \mathcal{H}^{d-1} (\{ \eta_r =s \}) \, ds
\leq \int_0 ^1 \mathcal{H}^{d-1} (\cup_{i=1} ^N \partial B_{2r} (x_i) \cup \cup_{i=1} ^M \partial B_{2r} (y_i)) \, ds \\
\leq & \, \omega _{d-1} (M+N) (2r)^{d-1},
\end{split}
\end{equation*}
where we used the coarea formula and $\omega_{d-1} := \mathcal{H}^{d-1} (\partial B_1 (0))$.
Therefore
\begin{equation*}
\begin{split}
&\left| \int_{\Omega} \left( \delta_{ij} - \frac{ \partial_{x_i} u \partial _{x_j} u }{|\nabla u|^2} \right)
(\partial_{x_j} \eta_r) X ^i |\nabla u| \, dx \right|
=
\left| \int_{\Omega} \left( \frac{\nabla u}{|\nabla u|} \cdot \nabla \eta_r \right) 
\left( \frac{\nabla u}{|\nabla u|} \cdot X \right)  |\nabla u| \, dx \right| \\
\leq & \, 
\max _{\Omega\times [0,T]} |X| \| \nabla u \|_{L^\infty (\Omega \times [0,T])}
\int_{\Omega} |\nabla \eta_r | \, dx
\to 0 \qquad \text{as} \ r\to 0
\end{split}
\end{equation*}
and thus
\begin{equation}\label{eta-r3}
\begin{split}
&\int_{\Omega} \left( \delta_{ij} - \frac{ \partial_{x_i} u \partial _{x_j} u }{|\nabla u|^2} \right)
\partial_{x_j} (\eta_r X ^i) |\nabla u| \, dx \\
= & \, 
\int_{\Omega} \left( \delta_{ij} - \frac{ \partial_{x_i} u \partial _{x_j} u }{|\nabla u|^2} \right)
(\partial_{x_j} \eta_r) X ^i |\nabla u| \, dx
+
\int_{\Omega} \eta_r 
\left( \delta_{ij} - \frac{ \partial_{x_i} u \partial _{x_j} u }{|\nabla u|^2} \right)
\partial_{x_j} X ^i |\nabla u| \, dx \\
\to & 
\int_{\Omega} \left( \delta_{ij} - \frac{ \partial_{x_i} u \partial _{x_j} u }{|\nabla u|^2} \right)
\partial_{x_j} X ^i |\nabla u| \, dx \qquad \text{as} \ r\to 0,
\end{split}
\end{equation}
where we used the dominated convergence theorem again.
By \eqref{eta-r1}, \eqref{eta-r2}, and \eqref{eta-r3} we obtain \eqref{BVineq}.
\end{proof}

We define $\nu:=- \dfrac{\nabla u}{|\nabla u|}$ on $\{ |\nabla u|>0 \}$
and 
\[
\Gamma_t ^\gamma := \{ x\in \Omega \mid u(x,t)=\gamma \} \qquad
\text{for any} \ \gamma \in \R \ \text{and} \ t \geq 0.
\]
The following properties are obtained without using $u$ being a limit of the solution to \eqref{mcf}.
\begin{lemma}\label{lem:3.6}
Assume $u \in W^{1,\infty} _{loc} (\Omega \times (0,\infty))$. Then the following holds.
\begin{enumerate}
\item[(i)] For a.e.\ $(\gamma,t) \in \R \times (0,\infty)$, 
\begin{equation}\label{eq:68}
\mathcal{H}^{d-1} (\Gamma _t ^\gamma \cap \{ x \in \Omega \mid u \ \text{is not differential at} \ (x,t) \ \text{or} \ |\nabla u (x,t)|=0\}) =0.
\end{equation}
\item[(ii)] For a.e.\ $(\gamma,t) \in \R \times (0,\infty)$, $\mathcal{H}^{d-1} (\Gamma _t ^\gamma) <\infty$ and $\Gamma _t ^\gamma$ is countably $(d-1)$-rectifiable and $\mathcal{H}^{d-1}$-measurable.
\end{enumerate}
\end{lemma}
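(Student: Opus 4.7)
The plan is to derive both statements from the Federer coarea formula applied to the Lipschitz function $u(\cdot,t) \in W^{1,\infty}(\Omega)$ (well-defined for a.e.\ $t$ since $u \in W^{1,\infty}_{loc}(\Omega \times (0,\infty))$): for every non-negative Borel function $g$ on $\Omega$,
\[
\int_\Omega g(x)\, |\nabla u(x,t)| \, dx = \int_\R \int_{\Gamma_t^\gamma} g \, d\mathcal{H}^{d-1} \, d\gamma.
\]
Everything else is bookkeeping around this identity and Rademacher's theorem.

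For (i), let $Z_t \subset \Omega$ denote the set of $x$ where either $u$ fails to be differentiable at $(x,t)$ or $|\nabla u(x,t)| = 0$. By Rademacher's theorem applied to $u$ on $\Omega \times (0,\infty)$, together with Fubini, the $t$-slice of the non-differentiability set is Lebesgue-null for a.e.\ $t$; on the remainder of $Z_t$ the integrand $|\nabla u(\cdot,t)|$ vanishes by definition. Applying the coarea formula with $g = \mathbf{1}_{Z_t}$ therefore yields $\int_\R \mathcal{H}^{d-1}(\Gamma_t^\gamma \cap Z_t)\, d\gamma = 0$, so $\mathcal{H}^{d-1}(\Gamma_t^\gamma \cap Z_t) = 0$ for a.e.\ $\gamma$ (depending on $t$). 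A Fubini argument, relying on joint $(\gamma,t)$-measurability, upgrades this to a.e.\ $(\gamma,t)$.

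For (ii), inserting $g \equiv 1$ in the coarea formula yields
\[
\int_\R \mathcal{H}^{d-1}(\Gamma_t^\gamma)\, d\gamma = \int_\Omega |\nabla u(x,t)|\, dx \leq \|\nabla u\|_{L^\infty}\, |\Omega| < \infty,
\]
so $\mathcal{H}^{d-1}(\Gamma_t^\gamma) < \infty$ for a.e.\ $\gamma$, and by Fubini for a.e.\ $(\gamma,t)$. Each $\Gamma_t^\gamma$ is closed in $\Omega$ as the preimage of $\{\gamma\}$ under the continuous function $u(\cdot,t)$, hence $\mathcal{H}^{d-1}$-measurable. For countable $(d-1)$-rectifiability, part (i) reduces the claim to the set $\Gamma_t^\gamma \setminus Z_t$, on which $u(\cdot,t)$ is differentiable with non-vanishing gradient. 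By a Whitney/Lusin-type approximation of Lipschitz functions, $u(\cdot,t)$ agrees outside sets of arbitrarily small Lebesgue measure with $C^1$ functions, whose $\gamma$-level sets are $(d-1)$-dimensional $C^1$-submanifolds wherever the gradient does not vanish (implicit function theorem). A countable family of such approximations covers $\Gamma_t^\gamma \setminus Z_t$ up to an $\mathcal{H}^{d-1}$-null set and gives the rectifiability.

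The main technical nuance is the Fubini step: one has to verify that the maps $(\gamma,t) \mapsto \mathcal{H}^{d-1}(\Gamma_t^\gamma)$ and $(\gamma,t) \mapsto \mathcal{H}^{d-1}(\Gamma_t^\gamma \cap Z_t)$ are jointly measurable, in order to pass from per-slice statements to a.e.\ $(\gamma,t)$ statements. This is handled via the joint continuity of $u$ together with standard slicing; alternatively, one can apply a space-time variant of the coarea formula to $u$ viewed as a Lipschitz function on $\Omega \times (0,T)$. Beyond this, the argument is a routine combination of Rademacher's theorem, the coarea formula, and classical rectifiability of level sets of Lipschitz functions.
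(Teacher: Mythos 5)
Your slice-wise argument is correct in spirit but takes a genuinely different route from the paper, and it trades a clean measurability argument for one that you correctly flag as delicate but do not close. The paper applies the coarea formula a single time to the \emph{vector-valued} Lipschitz map $v(x,t)=(u(x,t),t)^T\colon \Omega\times(0,\infty)\to\R^2$, computes the Jacobian $Jv=|\nabla_x u|$, and reads off
\[
\int_A |\nabla_x u|\,dx\,dt=\int_{\R^2}\mathcal{H}^{d-1}\bigl(A\cap v^{-1}(\{(\gamma,t)\})\bigr)\,d\gamma\,dt,
\]
where $v^{-1}(\{(\gamma,t)\})=\Gamma_t^\gamma\times\{t\}$. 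Since the left-hand side is an integral over the product $\R\times(0,\infty)$ directly, the a.e.\ $(\gamma,t)$ statements drop out at once with the joint measurability of $(\gamma,t)\mapsto\mathcal{H}^{d-1}(\Gamma_t^\gamma\cap A)$ subsumed in the coarea theorem itself; rectifiability and $\mathcal{H}^{d-1}$-measurability of the fibers are then a standard consequence of $v$ being Lipschitz (Ambrosio--Fusco--Pallara, Thm.\ 2.93). Your approach instead runs the scalar coarea formula slice by slice in $t$ and then passes to a.e.\ $(\gamma,t)$ by Fubini, which requires an explicit verification that $(\gamma,t)\mapsto\mathcal{H}^{d-1}(\Gamma_t^\gamma)$ and $(\gamma,t)\mapsto\mathcal{H}^{d-1}(\Gamma_t^\gamma\cap Z_t)$ are jointly measurable -- exactly the point you mention as the ``main technical nuance'' but leave open; this is not automatic from joint continuity of $u$. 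Your closing suggestion to ``apply a space-time variant of the coarea formula to $u$ viewed as a Lipschitz function on $\Omega\times(0,T)$'' is the right instinct but as stated produces $d$-dimensional level sets in space-time rather than the $(d-1)$-dimensional time slices; the correct formulation uses the $\R^2$-valued map $(u,t)$ as above. Your Lusin/Whitney argument for rectifiability is a fine elementary replacement for the citation the paper uses. If you either supply the joint measurability proof or switch to the vector-valued coarea formula, the argument is complete.
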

\begin{proof}
Set
$v (x,t) = (u(x,t),t)^T$
 for $(x,t) \in \Omega \times (0,\infty)$. We denote
\[
\nabla_x u = \nabla u = (\partial _{x_1} u ,\dots, \partial_{x_d} u)
\qquad 
\text{and}
\qquad
\nabla_{x.t} u =(\nabla_x u , \partial_{t} u).
\]
We compute that
\[
\nabla_{x,t} v
=
\begin{pmatrix}
\nabla_{x} u & \partial_t u \\
0 & 1
\end{pmatrix}
\]
and the Jacobian is given by
\[
Jv=\sqrt{\text{det} \, (\nabla_{x,t} v \circ \nabla_{x,t} v ^T) }
=
\sqrt{\text{det} \, 
\begin{pmatrix}
|\nabla _x u| ^2 +(\partial _t u)^2 & \partial_t u \\
\partial _t u & 1 
\end{pmatrix}
}
=|\nabla _x u| \qquad \text{a.e.\ in} \ \Omega \times (0,\infty).
\]
Thus the coarea formula tells us that
\begin{equation}\label{eq:69}
\int_A |\nabla _x u| \, dxdt
=
\int_A Jv \, dxdt
= \int_{\R^2} \mathcal{H}^{(d+1)-2}
(A \cap v^{-1} (\{z\})) \, dz    
\end{equation}
for any measurable set $A \subset \Omega \times (0,\infty)$.
Substituting $A=\{ u \ \text{is not differentiable or} \ |\nabla_x u|=0 \}$ into \eqref{eq:69}, 
\[
\mathcal{H}^{d-1} (A \cap v^{-1} (\{ z \}))=0
\qquad \text{for a.e.} \ z =(\gamma ,t) \in \R\times (0,\infty).
\]
Hence we obtain (i). By \eqref{eq:69}, 
\[
\mathcal{H}^{d-1} (\Gamma _t ^\gamma) <\infty
\qquad \text{for a.e.} \ z =(\gamma ,t) \in \R\times (0,\infty).
\]
Since $v$ is Lipschitz, $\Gamma_t ^\gamma$ is countably $(d-1)$-rectifiable and $\mathcal{H}^{d-1}$-measurable for a.e.\ $(\gamma,t) \in \R\times (0,\infty)$ (for example, see \cite[Theorem 2.93]{MR1857292}).
Therefore we have (ii).
\end{proof}

\begin{theorem}
Under the same assumptions as Theorem \ref{thm:3.4}, for any $T>0$ and for a.e.\ $(\gamma ,t) \in \R \times (0,T)$ we have 
\begin{equation}\label{BVineq2}
\int_{\Gamma _t ^\gamma}
\left\{
\left( \delta_{ij} - \nu_i \nu_j \right)
\partial_{x_j} X^i +
X \cdot \nu V 
\right\} \, d \mathcal{H}^{d-1}
\geq 0
\end{equation}
for any $X \in C^1 (\Omega \times [0,T])$ with
\[
X\cdot \nabla \psi \geq 0 \quad \text{on} \ A_+
\quad
\text{and}
\quad
X \cdot \nabla \phi \leq 0 \quad \text{on} \ A_-.
\]
\end{theorem}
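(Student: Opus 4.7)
The plan is to localize the integral inequality \eqref{BVineq} from Theorem~\ref{thm:3.4} onto individual level sets by testing it with vector fields of the form $\tilde X = \beta(u) X$, where $\beta \in C_c^1(\R;[0,\infty))$ is a scalar cutoff in the level-set variable; the coarea formula then extracts the contribution at each level $\gamma$, and arbitrariness of $\beta$ yields the pointwise-in-$\gamma$ inequality for a.e.\ $\gamma$. Admissibility is inherited automatically: on $A_+ = \{u=\psi\}$ one has $\tilde X \cdot \nabla \psi = \beta(\psi)\,X \cdot \nabla \psi \geq 0$, and analogously on $A_-$. Since $u$ is only Lipschitz, $\tilde X$ is not $C^1$, so I would first replace $u$ in $\beta(u)$ by a mollification $u_\rho$, apply Theorem~\ref{thm:3.4} to $\tilde X_\rho := \beta(u_\rho) X \in C^1$, and pass $\rho \to 0$ afterwards.

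The central algebraic simplification is that the $\beta'$-contribution from $\partial_{x_j} \tilde X_\rho^i = \beta'(u_\rho) \partial_{x_j} u_\rho\, X^i + \beta(u_\rho) \partial_{x_j} X^i$ vanishes against the projection $(\delta_{ij} - \nu_i \nu_j)$: using $\nu = -\nabla u/|\nabla u|$ one has $(\delta_{ij} - \nu_i \nu_j) \partial_{x_j} u = 0$ on $\{|\nabla u|>0\}$, while the factor $|\nabla u|$ in the integrand kills any contribution on $\{|\nabla u|=0\}$; combined with $\partial_{x_j} u_\rho \to \partial_{x_j} u$ a.e.\ and dominated convergence (integrands bounded by $C|\nabla u| \in L^1$), this disposes of the $\beta'$-term in the limit. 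The surviving inequality is, for a.e.\ $t \in (0,T)$,
\[
    \int_\Omega \beta(u) \bigl\{ (\delta_{ij} - \nu_i \nu_j) \partial_{x_j} X^i + V \, \nu \cdot X \bigr\} |\nabla u| \, dx \geq 0,
\]
and the coarea formula converts this to
\[
    \int_\R \beta(\gamma)\, I_X(\gamma, t)\, d\gamma \geq 0, \qquad I_X(\gamma, t) := \int_{\Gamma_t^\gamma} \bigl\{ (\delta_{ij} - \nu_i \nu_j) \partial_{x_j} X^i + V \, \nu \cdot X \bigr\} d\mathcal{H}^{d-1}.
\]
Arbitrariness of $\beta \in C_c^1(\R;[0,\infty))$ then yields $I_X(\gamma, t) \geq 0$ for a.e.\ $(\gamma, t) \in \R \times (0,T)$ by Fubini.

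To upgrade the conclusion from ``a $(\gamma,t)$-null set depending on $X$'' to one valid for every admissible $X$ simultaneously, I would fix a countable dense subset $\{X_n\}$ of the closed convex admissible cone inside $C^1(\Omega \times [0,T]; \R^d)$, run the preceding argument for each $X_n$, and take the countable union of the resulting null sets. Lemma~\ref{lem:3.6} guarantees that outside a further $(\gamma, t)$-null set the section $\Gamma_t^\gamma$ is countably $(d-1)$-rectifiable with $|\nabla u|>0$ $\mathcal{H}^{d-1}$-a.e., so $\nu$ and $V$ are well-defined $\mathcal{H}^{d-1}$-a.e.\ on $\Gamma_t^\gamma$; on this good set the linear functional $X \mapsto I_X(\gamma, t)$ is continuous in the $C^1$-norm, and density extends the inequality to every admissible $X$. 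The step I expect to require the most care is the mollification argument in paragraph two: one needs a.e.\ convergence $\partial_{x_j} u_\rho \to \partial_{x_j} u$ together with a uniform $L^\infty$-bound to legitimately dispose of the $\beta'$-term via dominated convergence, all while maintaining admissibility of $\tilde X_\rho$ on the contact sets $A_\pm$ uniformly in $\rho$.
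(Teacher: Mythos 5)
Your proof is correct and takes essentially the same approach as the paper: both test the time-averaged inequality of Theorem~\ref{thm:3.4} with vector fields of the form $F(\tilde u)\,X$ (the paper uses the pre-existing smooth approximants $\tilde u = u_\varepsilon$ with their weak-$\ast$ gradient convergence, you use a mollification $\tilde u = u_\rho$ with a.e.\ convergence and dominated convergence---either works), kill the extra $F'$-term via the projection identity $(\delta_{ij}-\nu_i\nu_j)\partial_{x_j}u\,|\nabla u| = 0$, and disintegrate over level sets by the coarea formula. Your final paragraph on extracting a null set independent of $X$ via a countable dense family of admissible vector fields and the rectifiability guaranteed by Lemma~\ref{lem:3.6} is a detail the paper leaves implicit but is indeed needed for the stated ``for any $X$'' conclusion.
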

\begin{proof}
Let $X \in C^1 (\Omega \times [0,T])$ 
satisfy the above assumption. 
Then for any non-negative $G \in L^\infty (0,T)$, 
by Theorem \ref{thm:3.4} 
and
$\nu= - \frac{\nabla u}{|\nabla u|}$ 
we have
\begin{equation}\label{BVineq3}
0\leq 
\int_0 ^T G
\int_{\Omega} \left\{ \left( \delta_{ij} - \nu_i \nu_j \right)
\partial_{x_j} X^i 
+ X \cdot \nu V
\right\} |\nabla u| \, dxdt.
\end{equation}
For any smooth non-negative function $F :\R\to [0,\infty)$, 
substituting $F (u_\varepsilon) X$ into \eqref{BVineq3}, 
\begin{equation}\label{BVineq4}
\begin{split}
0\leq & \, 
\int_0 ^T G
\int_{\Omega} F (u_\varepsilon) \left\{ \left( \delta_{ij} - \nu_i \nu_j \right)
\partial_{x_j} X^i 
+ X \cdot \nu V
\right\} |\nabla u| \, dxdt \\
&+
\int_0 ^T G
\int_{\Omega} F' (u_\varepsilon) \left\{ \left( \delta_{ij} - \nu_i \nu_j \right)
\partial_{x_j} u_\varepsilon X^i 
\right\} |\nabla u| \, dxdt.
\end{split}
\end{equation}
Since $u_\varepsilon \to u$ uniformly on $\Omega \times [0,T]$ and
$\partial _{x_j} u_\varepsilon \rightharpoonup \partial_{x_j} u$ weakly-$\ast$ in $L^\infty (\Omega \times 0,T))$, we have
\begin{equation}\label{BVineq5}
\begin{split}
0\leq  
\int_0 ^T G
\int_{\Omega} F (u) \left\{ \left( \delta_{ij} - \nu_i \nu_j \right)
\partial_{x_j} X^i 
+ X \cdot \nu V
\right\} |\nabla u| \, dxdt,
\end{split}
\end{equation}
where we used $(\delta_{ij} - \nu _i \nu_j)\partial _{x_j} u X^i
=|\nabla u| (\delta_{ij} -\nu_i\nu_j) \nu_j X^i=0$.
Then by the coarea formula, we obtain
\begin{equation}\label{BVineq5}
\begin{split}
0\leq  
\int_0 ^T 
\int_\R G (t) F (\gamma) \left(
\int_{\Gamma_t ^\gamma} \left\{ \left( \delta_{ij} - \nu_i \nu_j \right)
\partial_{x_j} X^i 
+ X \cdot \nu V
\right\}  \, d\mathcal{H}^{d-1} \right) d\gamma dt
\end{split}
\end{equation}
for any non-negative functions
$F \in C^\infty (\R)$ and
$G \in L^\infty (0,T)$.
Thus \eqref{BVineq2} holds for a.e.\ $(\gamma,t) \in \R \times (0,T)$. Note that $\nu$ and $V=\dfrac{\partial _t u}{|\nabla u|}$ are well-defined on $\Gamma_t ^\gamma$ for a.e.\ $(\gamma,t) \in \R \times (0,T)$ by \eqref{eq:68}.
\end{proof}

\begin{theorem}\label{thm:3.8}
\begin{enumerate}
\item[(i)]
It holds that
\begin{equation}\label{eq:75}
\int_0 ^T \int_\Omega
\partial_t \zeta u \, dxdt
= 
-\int_0 ^T \int_\Omega \zeta V |\nabla u| \, dxdt
- \int_\Omega g \zeta (\cdot,0) \, dx
\end{equation}
for any $\zeta \in C ^1 (\Omega \times [0,T))$.
In addition, for a.e.\ $\gamma \in \R$, 
\begin{equation}\label{eq:77}
\int_0 ^T \int_{U_t ^\gamma}
\partial_t \zeta \, dxdt
= 
-\int_0 ^T \int_{\Gamma_t ^\gamma} \zeta V  \, d\mathcal{H}^{d-1} dt
- \int_{U_0 ^\gamma} \zeta (\cdot,0) \, dx
\end{equation}
for any $\zeta \in C ^1 (\Omega \times [0,T))$, where
$U_t ^\gamma := \{ x \in \Omega \mid u(x,t) >\gamma \}$.
\item[(ii)]
For any $t_1,t_2 \in [0,\infty)$ with $t_1 <t_2$, we have the following: For a.e.\ $\gamma \in \R$, it holds that
\begin{equation}\label{energy-ineq2}
\mathcal{H}^{d-1} (\Gamma_{t_2} ^\gamma) 
+
\int_{t_1} ^{t_2} 
\int_{\Gamma_t ^\gamma} V^2 \, d\mathcal{H}^{d-1} dt
\leq 
\mathcal{H}^{d-1} (\Gamma_{t_1} ^\gamma) .
\end{equation}
\end{enumerate}
\end{theorem}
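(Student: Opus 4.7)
My plan is to derive both (i) and (ii) from their volumetric analogues established in Lemma~\ref{lem:3.2}---namely the distributional identity $\partial_t u = V|\nabla u|$ and the global energy inequality \eqref{energy-ineq}---by means of the layer-cake and coarea formulas, isolating the pointwise-in-$\gamma$ statements by testing against an arbitrary nonnegative weight $F'(\gamma)$.

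For \eqref{eq:75}, I multiply $\partial_t u_\varepsilon = |\nabla u_\varepsilon|_\varepsilon(-H_\varepsilon + f_\varepsilon)$ by $\zeta \in C^1(\Omega \times [0,T))$ with $\zeta(\cdot,T)=0$, integrate by parts in time on the left using $u_\varepsilon(\cdot,0)=g$, and pass to the limit using uniform convergence $u_\varepsilon \to u$ together with the weak-$\ast$ convergence \eqref{eq:40}. For \eqref{eq:77}, I combine the chain rule $\partial_t F(u) = F'(u) V |\nabla u|$ a.e.\ (Lemma~\ref{lem:3.2}\eqref{lem:3.2(i)}) with the layer-cake identity $F(s) = F(-L) + \int_{-L}^L F'(\gamma) \chi_{\{s>\gamma\}}\,d\gamma$, valid because $|u|,|g|\leq L$ by Proposition~\ref{prop:mp}\eqref{item:mp1}, and the coarea formula applied to $\int F'(u) \zeta V |\nabla u|\,dx$. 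After the constant $F(-L)$ contributions on both sides cancel, the identity for $F(u)$ rearranges into
\[
    \int_{-L}^L F'(\gamma)\,\Xi(\gamma,\zeta)\,d\gamma = 0, \qquad \Xi(\gamma,\zeta) := \int_0^T\!\!\int_{U_t^\gamma}\!\partial_t \zeta\,dx\,dt + \int_0^T\!\!\int_{\Gamma_t^\gamma}\!\zeta V\,d\mathcal{H}^{d-1}dt + \int_{U_0^\gamma}\!\zeta(\cdot,0)\,dx,
\]
and since $F' \in C^0$ is otherwise arbitrary, $\Xi(\gamma,\zeta) = 0$ for a.e.\ $\gamma$. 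Applying this to a countable dense family of $\zeta$ and intersecting null sets gives \eqref{eq:77} for a.e.\ $\gamma$.

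For the level-set EDI \eqref{energy-ineq2}, the strategy is the same---apply coarea with an $F'$-weight and extract pointwise---but now the required input is the $F'$-weighted global EDI
\[
    \int_\Omega F'(u)|\nabla u|\,dx\bigg|_{t_2} + \int_{t_1}^{t_2}\!\!\int_\Omega F'(u)V^2|\nabla u|\,dx\,dt \leq \int_\Omega F'(u)|\nabla u|\,dx\bigg|_{t_1}
\]
for every nonnegative $F' \in C^0(\R)$ arising from some $F\in C^2(\R)$. To establish it, I differentiate $\int F'(u_\varepsilon)[|\nabla u_\varepsilon|_\varepsilon + V_\varepsilon(u_\varepsilon)]\,dx$ at the $\varepsilon$-level using $\partial_t u_\varepsilon = |\nabla u_\varepsilon|_\varepsilon (-H_\varepsilon + f_\varepsilon)$, integration by parts, and the chain-rule identity $f_\varepsilon \partial_t u_\varepsilon = -\partial_t V_\varepsilon(u_\varepsilon)$; after rewriting the non-conservative term $-\int F'(u_\varepsilon)\partial_t V_\varepsilon(u_\varepsilon)\,dx$ as $-\tfrac{d}{dt}\!\int F'(u_\varepsilon)V_\varepsilon(u_\varepsilon)\,dx + \int F''(u_\varepsilon)\partial_t u_\varepsilon V_\varepsilon(u_\varepsilon)\,dx$, one arrives at the exact identity
\[
    \tfrac{d}{dt}\!\int F'(u_\varepsilon)\bigl[|\nabla u_\varepsilon|_\varepsilon + V_\varepsilon(u_\varepsilon)\bigr]dx + \int F'(u_\varepsilon)(-H_\varepsilon+f_\varepsilon)^2|\nabla u_\varepsilon|_\varepsilon\,dx = \int F''(u_\varepsilon)\partial_t u_\varepsilon\Bigl[\tfrac{\varepsilon^2}{|\nabla u_\varepsilon|_\varepsilon} + V_\varepsilon(u_\varepsilon)\Bigr]dx.
\]
Integrating in $t$ and passing to $\varepsilon \to 0$: the areas converge by the strict $BV$-convergence of Theorems~\ref{thm:2.5}--\ref{thm:2.7} together with uniform convergence $F'(u_\varepsilon)\to F'(u)$; the potential terms and the right-hand side vanish by the Corollary after Theorem~\ref{thm:2.7} ($V_\varepsilon(u_\varepsilon) \to 0$ in $L^1$) and the uniform bound on $\partial_t u_\varepsilon$ from Proposition~\ref{prop:mp}\eqref{item:mp3}; and the dissipation is passed in the $\liminf$ by a weighted version of the Cauchy--Schwarz duality argument used in the proof of Lemma~\ref{lem:3.2}\eqref{lem:3.2(ii)}, now with weight $F'(u_\varepsilon) \to F'(u)$. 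Applying coarea and varying $F' \geq 0$ then extracts \eqref{energy-ineq2} pointwise for a.e.\ $\gamma$.

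The main obstacle is the weighted EDI. Naively inserting the weight $F'(u_\varepsilon)$ into the $\varepsilon$-gradient-flow identity generates the non-conservative term $-\int F'(u_\varepsilon)\partial_t V_\varepsilon(u_\varepsilon)\,dx$, which has no a priori sign and cannot be bounded via $V_\varepsilon'(u_\varepsilon)$, since the latter blows up as $\varepsilon\to 0$. The trick is to repeat the integration by parts underlying the original gradient-flow structure: this decomposes the problematic term into a total time derivative---which vanishes in the limit by the $L^1$ vanishing of $V_\varepsilon(u_\varepsilon)$---plus an error involving only the bounded quantity $\partial_t u_\varepsilon$ and again $V_\varepsilon(u_\varepsilon)$, so that the singular factor $V_\varepsilon'(u_\varepsilon)$ never needs to be estimated individually.
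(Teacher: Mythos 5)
Your proof is correct, and for part (ii) it takes a genuinely different route from the paper. For \eqref{eq:77} the two arguments end up being essentially the same---your chain-rule identity $\partial_t F(u)=F'(u)V|\nabla u|$ a.e.\ is precisely the paper's \eqref{vv}, and the subsequent layer-cake/coarea/Lebesgue-differentiation manipulations are identical---so the only real difference there is cosmetic: the paper dresses this up by invoking the relabeling property of viscosity solutions and introducing an auxiliary family $w_\varepsilon$ with initial data $F\circ g$, whereas you observe (correctly) that the distributional integration-by-parts identity for $F(u)$ follows directly from Lipschitz regularity and the chain rule, with no need to know that $F(u)$ is itself a limit of $\varepsilon$-solutions. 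For part (ii), however, the approaches diverge substantively. The paper again reduces to $F\circ u$ via the relabeling/uniqueness theorem from the Appendix: since $F\circ u$ is itself the unique viscosity solution with data $F\circ g$, the entire machinery of Theorems~\ref{thm:2.5}, \ref{thm:2.7} and Lemma~\ref{lem:3.2}(ii) applies to $w_\varepsilon\to F\circ u$, yielding \eqref{energy-ineq} for $F\circ u$, after which coarea finishes. You instead derive a weighted energy-dissipation identity directly at the $\varepsilon$-level; your exact identity
\[
\frac{d}{dt}\int_\Omega F'(u_\varepsilon)\bigl[|\nabla u_\varepsilon|_\varepsilon + V_\varepsilon(u_\varepsilon)\bigr]dx
+\int_\Omega F'(u_\varepsilon)(-H_\varepsilon+f_\varepsilon)^2|\nabla u_\varepsilon|_\varepsilon\,dx
=\int_\Omega F''(u_\varepsilon)\partial_t u_\varepsilon\Bigl[\tfrac{\varepsilon^2}{|\nabla u_\varepsilon|_\varepsilon}+V_\varepsilon(u_\varepsilon)\Bigr]dx
\]
is verified by a direct computation (the two $F''$-terms from the area integral combine to produce $\varepsilon^2/|\nabla u_\varepsilon|_\varepsilon$, and the potential term is rewritten as a total time derivative plus a lower-order correction), and the right-hand side vanishes as $\varepsilon\to0$ because $|\partial_t u_\varepsilon|$ is bounded, $\varepsilon^2/|\nabla u_\varepsilon|_\varepsilon\leq\varepsilon$, and $\int V_\varepsilon(u_\varepsilon)\,dx\to0$. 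Passing the areas via the strict $BV$-convergence with uniform weight $F'(u_\varepsilon)\to F'(u)$, and passing the dissipation via a weighted Cauchy--Schwarz/duality argument (which works because $F'\geq0$), gives the $F'$-weighted EDI for $u$, and coarea then localizes in $\gamma$. What this buys you: your argument is entirely self-contained within the energy/compensated-compactness framework of Section 3 and never invokes the viscosity-solution apparatus (uniqueness, comparison, relabeling) from the Appendix; the price is the additional algebra in the weighted $\varepsilon$-EDI and the observation that the non-conservative cross term $F'(u_\varepsilon)\partial_t V_\varepsilon(u_\varepsilon)$ can be absorbed by integration by parts without ever needing to control $V_\varepsilon'$ pointwise. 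Both are valid; yours is arguably the cleaner path if one wants the $BV$ theory to stand independently of the level-set uniqueness theory.
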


\begin{proof}
For any $\zeta \in C ^1 (\Omega \times [0,T))$, by integration by parts,
\begin{equation}
\begin{split}
\int_0 ^T \int_\Omega
\partial_t \zeta u_\varepsilon \, dxdt
= 
-\int_0 ^T \int_\Omega \zeta \partial _t u_\varepsilon \, dxdt
- \int_\Omega g \zeta (\cdot,0) \, dx.
\end{split}
\end{equation}
By this, \eqref{eq:37}, and Lemma \ref{lem:3.2} (i), we obtain \eqref{eq:75}.
Let $F:\R\to \R$ be a smooth function with $F' (s) >0$ for any $s \in \R$. By the existence, uniqueness, and the relabeling property of viscosity solutions (see Appendix~\ref{appendix} below), 
$F \circ u$ is the unique viscosity solution of the level set flow with initial data $F \circ g$ and obstacles $F(\phi)$ and $F(\psi)$. Therefore
the solution $w_\varepsilon$
of \eqref{mcf} with initial data $w_\varepsilon (\cdot,0)=F \circ g$ converges $F \circ u $
uniformly by the uniqueness.
Set $V_{F} := \dfrac{\partial _t (F \circ u)}{|\nabla (F \circ u)|}$. 
By \eqref{eq:68},
for a.e.\ $(\gamma, t) \in \R \times (0,\infty)$, 
\begin{equation}
\label{vv}
V_{F} = 
\frac{F' (u) \partial _t u}{F' (u) |\nabla u| }=V,
\qquad 
\text{for} \ \mathcal{H}^{d-1} \text{-a.e.} \ x \in \Gamma_t ^\gamma. 
\end{equation}
Define $K:= \inf_{(x,t) \in \Omega\times [0,\infty)} u(x,t) -1$. Note that $K$ is finite number by Lemma \ref{prop:mp}.
By \eqref{eq:75} and the coarea formula,
we have
\begin{equation}\label{eq:78}
\begin{split}
\int_0 ^T \int_\Omega
\partial_t \zeta F \circ u \, dxdt
= & \, 
-\int_0 ^T \int_\Omega \zeta V_{F} |\nabla (F \circ u)| \, dxdt
- \int_\Omega F \circ g\, \zeta (\cdot,0) \, dx \\
= & \, 
-\int_0 ^T \int _\R F' (\gamma) \int_{\Gamma_t ^\gamma} \zeta V \, d\mathcal{H}^{d-1} d\gamma dt
- \int_\Omega F \circ g\, \zeta (\cdot,0) \, dx \\
= & \, 
-\int _K ^\infty F' (\gamma) \int_0 ^T \int_{\Gamma_t ^\gamma} \zeta V \, d\mathcal{H}^{d-1} dt d\gamma
- \int_\Omega F \circ g \,\zeta (\cdot,0) \, dx
\end{split}
\end{equation}
for any $\zeta \in C ^1 (\Omega \times [0,T))$.
Here we used \eqref{vv} and $\Gamma_ t ^\gamma =\emptyset$ for any $\gamma \in (-\infty,K)$.
Since
\[
F \circ u
=F (u(x,t))
=F (K) + \int _K ^{u(x,t)} F' (\gamma) \, d\gamma, 
\]
we have
\begin{equation}\label{eq:79}
\begin{split}
\int_0 ^T \int_\Omega
\partial_t \zeta F \circ u \, dxdt
= & \, 
F (K)
\int_0 ^T \int_\Omega
\partial_t \zeta \, dxdt
+\int_0 ^T \int _\Omega \partial_t \zeta\int_K ^{u(x,t)} F'(\gamma) \, d\gamma dxdt \\
=&\, -F (K) \int_\Omega \zeta(\cdot,0) \, dx
+ \int_ 0 ^T \int_ K ^\infty \int_{U_t ^\gamma} \partial_t \zeta F'(\gamma) \, dxd\gamma dt \\
=&\, -F (K) \int_\Omega \zeta(\cdot,0) \, dx
+ \int_ K ^\infty F'(\gamma) \int_ 0 ^T  \int_{U_t ^\gamma} \partial_t \zeta \, dx dt d\gamma, 
\end{split}
\end{equation}
where we used the layercake-formula and Fubini's theorem. Similarly, it holds that
\begin{equation}\label{eq:80}
- \int_\Omega F \circ g \zeta (\cdot,0) \, dx
=
- F (K) \int_\Omega \zeta (\cdot,0) \, dx
-\int_K ^\infty F' (\gamma) \int_{U_0 ^\gamma} \zeta (\cdot,0) \, dxd\gamma.
\end{equation}
Hence \eqref{eq:78}, \eqref{eq:79}, and \eqref{eq:80} imply
\begin{equation*}
\begin{split}
& \int_ K ^\infty F'(\gamma) \int_ 0 ^T  \int_{U_t ^\gamma} \partial_t \zeta \, dx dt d\gamma \\
= & \, 
-\int _K ^\infty F' (\gamma) \int_0 ^T \int_{\Gamma_t ^\gamma} \zeta V \, d\mathcal{H}^{d-1} dt d\gamma
-\int_K ^\infty F' (\gamma) \int_{U_0 ^\gamma} \zeta (\cdot,0) \, dxd\gamma .
\end{split}
\end{equation*}
By taking $F$ as a bump-function and by using the dominated convergence theorem, 
one can easily check that
\begin{equation*}
\begin{split}
\int_ a ^b \int_ 0 ^T  \int_{U_t ^\gamma} \partial_t \zeta \, dx dt d\gamma 
=  
-\int _a ^b \int_0 ^T \int_{\Gamma_t ^\gamma} \zeta V \, d\mathcal{H}^{d-1} dt d\gamma
-\int_a ^b \int_{U_0 ^\gamma} \zeta (\cdot,0) \, dxd\gamma
\end{split}
\end{equation*}
for any $a,b \in [K,\infty)$ with $a<b$.
Then by the Lebesgue-Besicovitch differentiation theorem, we obtain \eqref{eq:77} for a.e.\ $\gamma \in \R$.

Finally, we prove \eqref{energy-ineq2}. 
Let $F :\R\to \R$ be a smooth function with $F' (s) >0$ for any $s \in \R$.
Repeating the discussion above for \eqref{energy-ineq}, we obtain
\[
\int_\R F' (\gamma) 
\left(
\mathcal{H}^{d-1}(\Gamma _t ^\gamma) 
\Big | _{t=t_2}
-
\mathcal{H}^{d-1}(\Gamma _t ^\gamma) 
\Big | _{t=t_1}
+
\int_{t_1} ^{t_2} \int_{\Gamma _t ^\gamma} V^2 \, d\mathcal{H}^{d-1} dt 
\right) \, d\gamma \leq 0
\]
and hence
\[
\int_a ^b  
\left(
\mathcal{H}^{d-1}(\Gamma _t ^\gamma) 
\Big | _{t=t_2}
-
\mathcal{H}^{d-1}(\Gamma _t ^\gamma) 
\Big | _{t=t_1}
+
\int_{t_1} ^{t_2} \int_{\Gamma _t ^\gamma} V^2 \, d\mathcal{H}^{d-1} dt 
\right) \, d\gamma \leq 0
\]
holds for any $a,b \in [K,\infty)$ with $a<b$.
Therefore by the Lebesgue-Besicovitch differentiation theorem, we obtain \eqref{energy-ineq2} for a.e.\ $\gamma \in \R$.
\end{proof}

Before proving our main result, Theorem~\ref{thm:BVsol}, we establish the following preliminary continuity property.
\begin{lemma}\label{lemma3.9}
For a.e.\ $\gamma \in \R$ and a.e.\ $t \in [0,T)$, there exists $\{ t_n\}_{n=1} ^\infty$ such that $t_n \to t$ and $|U_t ^\gamma \bigtriangleup U_{t_n} ^\gamma| \to 0$.
\end{lemma}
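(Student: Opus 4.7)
The plan is to combine two simple ingredients: the Lipschitz continuity of $u$ in time (stemming from Proposition~\ref{prop:mp}\eqref{item:mp3} and \eqref{eq:37}) and a Fubini-type argument showing that, for a.e.\ pair $(\gamma,t)$, the $\gamma$-level set of $u(\cdot,t)$ has Lebesgue measure zero in $\Omega$. Once those two facts are in hand, the convergence $|U_t^\gamma \bigtriangleup U_{t_n}^\gamma| \to 0$ holds for \emph{every} sequence $t_n \to t$, which of course implies the existence statement of the lemma.

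First, I would consider the set
\[
S := \{(x,\gamma,t) \in \Omega \times \R \times [0,T] : u(x,t) = \gamma\}.
\]
Since $S$ is the graph of the function $(x,t) \mapsto u(x,t)$ (viewed in $\Omega \times [0,T] \times \R$), it has $(d+2)$-dimensional Lebesgue measure zero. By Fubini's theorem applied first to the $\gamma$-variable and then to the $t$-variable, for a.e.\ $\gamma \in \R$ and a.e.\ $t \in [0,T)$ we have $|\{x \in \Omega : u(x,t) = \gamma\}| = 0$. Fix such a $\gamma$ and $t$ for the rest of the argument.

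Second, I would exploit the uniform Lipschitz regularity of $u$ in the time variable. By Proposition~\ref{prop:mp}\eqref{item:mp3} and the weak-$\ast$ convergence in \eqref{eq:37}, there exists a constant $C>0$ with $\|\partial_t u\|_{L^\infty(\Omega \times (0,T))} \leq C$; a representative of $u$ then satisfies $|u(x,t') - u(x,t)| \leq C |t-t'|$ for all $t,t' \in [0,T]$ and a.e.\ $x \in \Omega$. For any sequence $t_n \to t$ this gives the inclusion
\[
U_t^\gamma \bigtriangleup U_{t_n}^\gamma \subset \{x \in \Omega : |u(x,t) - \gamma| \leq C|t - t_n|\}.
\]
The right-hand sides form a decreasing family (in $n$, up to null sets) whose intersection is contained in $\{x \in \Omega : u(x,t) = \gamma\}$, which has measure zero by the first step. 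Continuity of the Lebesgue measure from above therefore yields $|U_t^\gamma \bigtriangleup U_{t_n}^\gamma| \to 0$, which is the desired conclusion. There is no real obstacle here: the only subtle point is the ordering of the ``for a.e.'' quantifiers, and that is taken care of by applying Fubini to the graph $S$ in the order indicated above.
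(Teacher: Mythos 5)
Your proof is correct, and it takes a genuinely different — and more elementary — route than the paper's. The paper proves Lemma~\ref{lemma3.9} by testing the distributional velocity equation~\eqref{eq:77} with a piecewise-linear temporal cutoff $\zeta_{t_0,n}$, passing to the limit $n\to\infty$ via the Lebesgue--Besicovitch differentiation theorem, and using the Cauchy--Schwarz inequality together with the dissipation bound $\int_0^T\int_{\Gamma_t^\gamma}V^2\,d\mathcal{H}^{d-1}dt<\infty$ to obtain the quantitative H\"older-$\tfrac12$ estimate
\[
\Bigl|\int_{U_{t_1}^\gamma}\eta\,dx-\int_{U_{t_0}^\gamma}\eta\,dx\Bigr|
\le \sqrt{t_1-t_0}\,\Bigl(\int_0^T\int_{\Gamma_t^\gamma}V^2\,d\mathcal{H}^{d-1}dt\Bigr)^{1/2}
\]
for a.e.\ $t_0,t_1$. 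You instead observe that $u$ is uniformly Lipschitz in time (from Proposition~\ref{prop:mp}\eqref{item:mp3} and~\eqref{eq:37}), that the graph $S=\{(x,\gamma,t):u(x,t)=\gamma\}$ is Lebesgue-null, and that by two applications of Fubini the slice $\{x:u(x,t)=\gamma\}$ is null for a.e.\ $(\gamma,t)$; combining the inclusion $U_t^\gamma\bigtriangleup U_{t_n}^\gamma\subset\{|u(\cdot,t)-\gamma|\le C|t-t_n|\}$ with continuity of Lebesgue measure from above then gives the conclusion. Your approach has two advantages: it avoids the weak PDE and the energy estimate entirely, using only the Lipschitz bound; and it yields the strictly stronger conclusion that $|U_t^\gamma\bigtriangleup U_{t_n}^\gamma|\to0$ along \emph{every} sequence $t_n\to t$, which is convenient downstream (in the proof of Theorem~\ref{thm:BVsol} one wants to pick $t_n$ from a prescribed countable dense set $J$, and with your version this is immediate). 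What the paper's argument buys instead is a quantitative rate: a H\"older-$\tfrac12$ modulus of continuity for $t\mapsto\int_{U_t^\gamma}\eta\,dx$, which your argument does not give. For the qualitative statement of the lemma, both are valid and your proof is shorter.
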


\begin{proof}
By Theorem \ref{thm:3.8}, \eqref{eq:77} holds for a.e.\ $\gamma \in \R$. Fix such $\gamma$.
For $t_0 \in [0,T)$, set 
\[
\zeta (x,t)=\zeta_{t_0,n} (t) \eta (x), \qquad
\zeta_{t_0,n} (t) = 
\begin{cases}
1 & \text{if} \ t \in [0,t_0],\\
1-n(t-t_0) &\text{if} \ t \in [t_0, t_0 +\frac{1}{n}],\\
0 & \text{otherwise},
\end{cases}
\]
where $\eta \in C_0 ^1 (\Omega)$.
Let $J_\varepsilon$ be the standard mollifier. Substituting $J_\varepsilon \zeta$ into \eqref{eq:77} and taking the limit, we have
\begin{equation*}
\frac{1}{n} \int_{t_0} ^{t_0 +\frac{1}{n}} \int_{U_t ^\gamma} \eta \, dxdt
= 
\int_0 ^{t_0 +\frac{1}{n}} \int_{\Gamma_t ^\gamma} \zeta V  \, d\mathcal{H}^{d-1} dt
+ \int_{U_0 ^\gamma} \eta \, dx.
\end{equation*}
For $t_0, t_1 \in [0,T)$ with $t_0<t_1$, we have
\begin{equation*}
\begin{split}
&\left| \frac{1}{n} \int_{t_1} ^{t_1 +\frac{1}{n}} \int_{U_t ^\gamma} \eta \, dx dt
-
\frac{1}{n} \int_{t_0} ^{t_0 +\frac{1}{n}} \int_{U_t ^\gamma} \eta \, dx dt\right| \\
\leq & \,
\int_{t_0 +\frac{1}{n}} ^{t_1 +\frac{1}{n}} \int_{\Gamma_t ^\gamma} |\zeta V|  \, d\mathcal{H}^{d-1} dt
\leq \sqrt{t_1 -t_0}\left(
\int_{0} ^{T} \int_{\Gamma_t ^\gamma} V^2  \, d\mathcal{H}^{d-1} dt
\right)^{\frac{1}{2}}.
\end{split}
\end{equation*}
By taking the limit $n\to \infty$, for a.e.\ $t_0$ and a.e.\ $t_1$, 
\[
\left|
\int_{U_{t_1} ^\gamma} \eta \, dx
-
\int_{U_{t_0} ^\gamma} \eta \, dx
\right|
\leq 
\sqrt{t_1 -t_0}\left(
\int_{0} ^{T} \int_{\Gamma_t ^\gamma} V^2  \, d\mathcal{H}^{d-1} dt
\right)^{\frac{1}{2}},
\]
where we used the Lebesgue-Besicovitch differentiation theorem. Hence we obtain the claim.
\end{proof}

Now we are in the position to prove our main result.
\begin{proof}[Proof of Theorem~\ref{thm:BVsol}]
Let $T>0$ be any positive number.
By Proposition \ref{prop:mp} and the assumption of Theorem \ref{thm:3.4}, we have $|u(x,t)| \leq L$ for any $(x,t) \in \Omega\times [0,T]$ and hence we only need to consider $|\gamma|<L$.
The function $u$ satisfies $\phi \leq u \leq \psi$ by Theorem \ref{thm:2.5}.
Hence, for any $\gamma \in (-L,L)$ and for any $t\in [0,T]$, we have $\Phi ^\gamma \subset U_t ^\gamma \subset (\Psi ^\gamma)^c$.
In addition, for a.e.\ $\gamma \in (-L,L)$, \eqref{BVsol:2} and \eqref{BVsol:3} hold by 
 \eqref{eq:77} and 
\eqref{energy-ineq2}.

Next we show \eqref{BVsol:5}. 
By the assumption of Theorem \ref{thm:3.4} and the implicit function theorem, for any $\gamma \in (-L,L)\setminus \{ \psi(x_1),\dots,\psi (x_N), \phi(y_1),\dots, \phi(y_M) \}$,
\begin{equation}\label{thm4.10proof1}
\partial \Psi^\gamma \ \text{and} \ \partial \Phi^\gamma \ \text{are} \ C^1, \ \text{and} \ |\nabla \psi|>0 \ \text{on} \ \partial \Psi^\gamma \ \text{and} \ |\nabla \phi|>0 \ \text{on} \ \partial \Phi^\gamma.
\end{equation}
Note that such $\gamma$, the outer unit normal vectors of the obstacles given by 
\begin{equation}\label{obstacle-normal}
\nu_{\Psi^\gamma} = \frac{\nabla \psi}{|\nabla \psi|}
\qquad
\text{and}
\qquad
\nu_{\Phi^\gamma} = -\frac{\nabla \phi}{|\nabla \phi|}.
\end{equation}
Fix $\gamma \in (-L,L)$ such that \eqref{BVsol:2}, \eqref{thm4.10proof1}, and \eqref{BVineq2} hold for a.e.\ $t \in [0,T]$ and for any $X \in C^1 (\Omega \times [0,T])$ with
\[
X\cdot \nabla \psi \geq 0 \quad \text{on} \ A_+
\quad
\text{and}
\quad
X \cdot (-\nabla \phi) \geq 0 \quad \text{on} \ A_-.
\]
Set $\Gamma:=\{ (x,t) \in \Omega \times [0,T] \mid x \in \Gamma_t ^\gamma \}$.
For a contradiction, assume that $X \in C^1 (\Omega \times [0,T];\R^d)$ satisfies \eqref{BVsol:6}
and 
\begin{equation}\label{eq:mainthm:contradiction}
    \int_0 ^T \int_{\Gamma _t ^\gamma}
\left\{
\left( \delta_{ij} - \nu_i \nu_j \right)
\partial_{x_j} X^i +
X \cdot \nu V 
\right\} \, d \mathcal{H}^{d-1} dt<0.
\end{equation}
Note that 
\begin{equation}\label{admissible}
	X\cdot \nabla \psi \geq 0 \quad \text{on } (\Gamma \cap \partial \Psi ^\gamma \times [0,T])
	\quad
	\text{and}
	\quad
	X \cdot (-\nabla \phi) \geq 0 \quad \text{on } (\Gamma \cap \partial \Phi ^\gamma \times [0,T]),
\end{equation}
where we used \eqref{BVsol:6} and \eqref{obstacle-normal}.
Let $\delta>0$ and $\eta_\delta \in C^1 (\Omega \times [0,T])$ be a function such that $0\leq \eta_\delta (x,t)\leq 1$ for any $(x,t) \in \Omega\times [0,T]$, $\eta_\delta (x,t)=1$ if $ \dist ((x,t), \Gamma \cap (\partial \Psi^\gamma \cup \partial \Phi^\gamma) \times [0,T]) \leq \delta$, and
$\eta _\delta (x,t) =0$ if
$ \dist ((x,t), \Gamma \cap (\partial \Psi^\gamma \cup \partial \Phi^\gamma) \times [0,T]) \geq 2\delta$. We compute
\begin{equation*}
\begin{split}
&\int_{\Gamma }
\left\{
\left( \delta_{ij} - \nu_i \nu_j \right)
\partial_{x_j} X^i +
X \cdot \nu V 
\right\} \, d \mathcal{H}^{d-1} dt \\
= & \,
\int_{\Gamma } \left\{
\left( \delta_{ij} - \nu_i \nu_j \right)
\partial_{x_j} X^i _{\delta,1} +
 X_{\delta,1} \cdot \nu V 
\right\} \, d \mathcal{H}^{d-1} dt
+
\int_{\Gamma }
\left\{
\left( \delta_{ij} - \nu_i \nu_j \right)
\partial_{x_j} X^i _{\delta,2} +
X_{\delta,2} \cdot \nu V 
\right\} \, d \mathcal{H}^{d-1} dt\\
=: & \, K_1 +K_2,
\end{split}
\end{equation*}
where $X_{\delta,1}=\eta_\delta X$ and $X_{\delta,2} =(1-\eta_\delta) X$.
Note that by \eqref{thm4.10proof1} we have
\[
(A_+ \cup A_-) \cap \Gamma \cap \spt X_{\delta,2}=\emptyset,
\]
since if $x \in \spt X_{\delta,2}(\cdot,t) \cap \Gamma_t ^\gamma$, then 
$u (x,t)=\gamma$ and $ \dist ((x,t), \Gamma \cap (\partial \Psi^\gamma \cup \partial \Phi^\gamma) \times [0,T]) > \delta$ and hence $\phi (x) < u(x,t)=\gamma <\psi (x)$.
Therefore $K_2 \geq 0$ by \eqref{BVineq2}. By this and \eqref{eq:mainthm:contradiction} we have $K_1 < -\alpha$ for some $\alpha>0$. 
By the continuity of $X$, $\nabla \psi$, and $\nabla \phi$, for any $n\in \mathbb{N}$ there exists $\delta_n >0$ such that $\delta_n \to 0$ and
\begin{equation*}
	\begin{cases}
    X\cdot \nabla \psi \geq -\frac{1}{n} & \text{if} \ \ \dist ((x,t),\Gamma\cap \partial \Psi^\gamma \times[0,T])\leq 2\delta_n, \\
	X \cdot (-\nabla \phi) \geq -\frac{1}{n} & \text{if} \ \ \dist((x,t), \Gamma \cap \partial \Phi^\gamma \times [0,T])\leq 2\delta_n.
    \end{cases}
\end{equation*}
Note that the above estimates also hold for $X_{\delta_n,1}$.
Repeating the same argument as in the proof of Theorem \ref{thm:3.4}, for any $n\in \mathbb{N}$ there exists 
$Y_n \in C^1 (\Omega \times [0,T] ; \R^d)$ such that $\|Y_n \|_{C^1} \leq \frac{C}{n}$ for some $C>0$
and
\[
Y_n \cdot \nabla \psi \geq 0 \quad \text{on} \ A_+ 
\quad
\text{and}
\quad
Y_n \cdot (-\nabla \phi) \geq 0 \quad \text{on} \ A_-,
\]
and
\[
Y_n \cdot \nabla \psi \geq \frac{1}{n} \quad \text{on} \ A_+ \cap N_
+
\quad
\text{and}
\quad
Y_n \cdot (-\nabla \phi) \geq \frac{1}{n} \quad \text{on} \ A_- \cap N_-,
\]
where $N_+:= \{ \dist ((x,t), \partial \Psi^\gamma \times [0,T])<\delta_1 \} $ and 
$N_-:= \{ \dist ((x,t), \partial \Phi^\gamma \times [0,T])<\delta_1 \} $.
By \eqref{BVineq2} we have
\[
\int_{\Gamma }
\left\{
\left( \delta_{ij} - \nu_i \nu_j \right)
\partial_{x_j} (X_{\delta_n,1} +Y_n)^i +
(X_{\delta_n,1} +Y_n) \cdot \nu V 
\right\} \, d \mathcal{H}^{d-1} dt
\geq 0
\]
for any $n \in \mathbb{N}$.
In addition, by \eqref{BVsol:2} and $\|Y \|_{C^1} \leq \frac{C}{n}$, for sufficiently large $n$, we have $K_1 >-\frac{\alpha}{2}$. This is a contradiction. Hence we obtain \eqref{BVsol:5}.

Next we prove \eqref{BVsol:4}. 
Let $N_1 \subset \R$ be the set of all $\gamma \in (-L,L)$ for which the claim of Lemma \ref{lemma3.9} does not hold. Then $|N_1|=0$. Let $J \subset [0,T]$ be a countable dense set and let $N_2 \subset \R$ be the set of all $\gamma \in (-L,L)$ for which 
\begin{equation}\label{eq:90}
\mathcal{H}^{d-1} (\Gamma_{\tau} ^\gamma) 
+
\int_{0} ^{\tau} 
\int_{\Gamma_t ^\gamma} V^2 \, d\mathcal{H}^{d-1} dt
\leq 
\mathcal{H}^{d-1} (\Gamma_{0} ^\gamma)
\end{equation}
does not hold for some $\tau \in J$. Note that 
 $|N_2|=0$ by \eqref{energy-ineq2}.
Fix $\gamma \in (-L,L) \setminus (N_1 \cup N_2)$ and fix $T' \in (0,T)$ such that the claim of Lemma \ref{lemma3.9} is valid. Let $\{ T_n \}_{n=1} ^\infty \subset J$ be a sequence such that $T_n \downarrow T'$. Then we have
\[
\mathcal{H}^{d-1} (\Gamma_{T'} ^\gamma)
\leq 
\liminf_{n\to \infty}
\mathcal{H}^{d-1} (\Gamma_{T_n} ^\gamma)
\]
by the lower semi-continuity of the finite perimeter and 
$|U_{T'} ^\gamma \bigtriangleup U_{T_n} ^\gamma| \to 0$, and hence
\begin{equation*}
\begin{split}
\mathcal{H}^{d-1} (\Gamma_{T'} ^\gamma) 
+
\int_{0} ^{T'} 
\int_{\Gamma_t ^\gamma} V^2 \, d\mathcal{H}^{d-1} dt
\leq & \,
\liminf_{n\to \infty}
\left(
\mathcal{H}^{d-1} (\Gamma_{T_n} ^\gamma) 
+
\int_{0} ^{T_n} 
\int_{\Gamma_t ^\gamma} V^2 \, d\mathcal{H}^{d-1} dt
\right)\\
\leq & \,
\mathcal{H}^{d-1} (\Gamma_{0} ^\gamma),    
\end{split}    
\end{equation*}
where we used \eqref{eq:90}. Therefore we obtain \eqref{BVsol:1} and \eqref{BVsol:4}. 
\end{proof}

\section{Appendix}\label{appendix}
In this appendix, we consider the viscosity solutions for the level set equation~\eqref{eq:levelset_constraint}--\eqref{eq:levelset_contact} with obstacles $\phi$ and $\psi$, based on \cite{MR1100206, Mercier}. We start by defining subsolutions.

\begin{definition}
    $u \in C (\Omega \times [0,\infty)) \cap L^\infty (\Omega \times [0,\infty))$
    is called a viscosity subsolution of~\eqref{eq:levelset_constraint}--\eqref{eq:levelset_contact}
with obstacles $\phi$ and $\psi$ if 
\begin{enumerate}
    \item[(i)] $\phi (x) \leq u (x,t) \leq \psi (x)$ for any $x \in \Omega$ and $t\geq 0$, and
    \item[(ii)] for the periodically extended $u$ and for any $\eta \in C^\infty (\R^d)$, if $u-\eta$ has a local maximum at $(x_0,t_0) \in \R^d \times (0,\infty)$ and $u(x_0,t_0)>\phi (x_0)$, then 
    \begin{equation*}
    \left\{ 
\begin{array}{ll}
\partial _t \eta \leq \left( 
\delta_{ij} -\frac{\partial_{x_i} \eta \partial _{x_j} \eta}{|\nabla \eta|^2} \right)
\partial_{x_i x_j}\eta & \text{at} \ (x_0,t_0), \\
\qquad \text{if} \ \nabla \eta (x_0,t_0)\not=0, &
\end{array} \right.
    \end{equation*}
and
    \begin{equation*}
    \left\{ 
\begin{array}{ll}
\partial _t \eta \leq \left( 
\delta_{ij} -\zeta _i \zeta_j \right)
\partial_{x_i x_j}\eta & \text{at} \ (x_0,t_0), \\
\text{for some} \ \zeta \in \R^d \ \text{with} \ |\zeta| \leq 1, & \text{if} \ \nabla \eta (x_0,t_0)=0.
\end{array} \right.
    \end{equation*}
\end{enumerate}
\end{definition}

Similarly, we define supersolutions.
\begin{definition}
    $u \in C (\Omega \times [0,\infty)) \cap L^\infty (\Omega \times [0,\infty))$
     is called a viscosity supersolution of~\eqref{eq:levelset_constraint}--\eqref{eq:levelset_contact}
with obstacles $\phi$ and $\psi$ if 
\begin{enumerate}
    \item[(i)] $\phi (x) \leq u (x,t) \leq \psi (x)$ for any $x \in \Omega$ and $t\geq 0$, and
    \item[(ii)] for the periodically extended $u$ and for any $\eta \in C^\infty (\R^d)$, if $u-\eta$ has a local minimum at $(x_0,t_0) \in \R^d \times (0,\infty)$ and $u(x_0,t_0) < \psi (x_0)$, then 
    \begin{equation*}
    \left\{ 
\begin{array}{ll}
\partial _t \eta \geq \left( 
\delta_{ij} -\frac{\partial_{x_i} \eta \partial _{x_j} \eta}{|\nabla \eta|^2} \right)
\partial_{x_i x_j}\eta & \text{at} \ (x_0,t_0), \\
\qquad \text{if} \ \nabla \eta (x_0,t_0)\not=0, &
\end{array} \right.
    \end{equation*}
and
    \begin{equation*}
    \left\{ 
\begin{array}{ll}
\partial _t \eta \geq \left( 
\delta_{ij} -\zeta _i \zeta_j \right)
\partial_{x_i x_j}\eta & \text{at} \ (x_0,t_0), \\
\text{for some} \ \zeta \in \R^d \ \text{with} \ |\zeta| \leq 1, & \text{if} \ \nabla \eta (x_0,t_0)=0.
\end{array} \right.
    \end{equation*}
\end{enumerate}
\end{definition}

\begin{definition}\label{def:viscositysol}
$u \in C(\Omega \times [0,\infty))$ is called a viscosity solution of~\eqref{eq:levelset_constraint}--\eqref{eq:levelset_contact} with obstacles $\phi$ and $\psi$ if $u$ is both a viscosity subsolution and a viscosity supersolution.
\end{definition}

The following theorem  can be shown as well as the proof in \cite[Theorem 3.2]{MR1100206} (see also \cite[Proposition 1]{Mercier}).
\begin{theorem}\label{thm4.4}
Assume that $u \in C(\Omega \times [0,\infty))$ be a viscosity subsolution of~\eqref{eq:levelset_constraint}--\eqref{eq:levelset_contact} and
$v \in C(\Omega \times [0,\infty))$ be a viscosity supersolution of~\eqref{eq:levelset_constraint}--\eqref{eq:levelset_contact}
with
\[
u(x,0) \leq v(x,0) \qquad \text{for any} \ x \in \Omega.
\]
Then it holds that $u\leq v$ on $\Omega \times [0,\infty)$.
\end{theorem}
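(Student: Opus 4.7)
The plan is to adapt the classical Evans--Spruck doubling-of-variables argument~\cite{MR1100206} to the obstacle setting, following the outline of~\cite{Mercier}. Suppose, for contradiction, that $\sigma := \sup_{\Omega \times [0,T]}(u-v) > 0$ for some $T>0$. Fix $\alpha \in (0, \sigma/(4T))$ and consider the doubled auxiliary function
\[
    \Phi_\varepsilon(x,y,t,s) := u(x,t) - v(y,s) - \frac{|x-y|^4}{4\varepsilon^4} - \frac{(t-s)^2}{2\varepsilon^2} - \alpha(t+s)
\]
on $\Omega^2 \times [0,T]^2$, which by compactness of $\Omega=\mathbb{T}^d$ attains its maximum at some point $(\bar x, \bar y, \bar t, \bar s)$ depending on $\varepsilon$. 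Standard penalization estimates show that, after possibly shrinking $\alpha$, one has $\bar t, \bar s > 0$, the penalization terms stay bounded and tend to zero, $|\bar x - \bar y|\to 0$, and $u(\bar x,\bar t)-v(\bar y,\bar s)\to \sigma-2\alpha T>0$ as $\varepsilon\to 0$. I would establish these facts first, exactly as in the classical case.

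Next I would dispose of the two \emph{obstacle cases} by direct inspection of the constraint (i). If $u(\bar x,\bar t) = \phi(\bar x)$, the sub-solution test is blocked, but the supersolution constraint $v(\bar y,\bar s)\geq \phi(\bar y)$ together with continuity of $\phi$ forces
\[
    u(\bar x,\bar t) - v(\bar y,\bar s) \leq \phi(\bar x) - \phi(\bar y) \longrightarrow 0,
\]
contradicting the uniform positive lower bound obtained above. The symmetric argument using $u\leq \psi$ rules out $v(\bar y,\bar s)=\psi(\bar y)$. Therefore one may assume the strict inequalities $u(\bar x,\bar t) > \phi(\bar x)$ and $v(\bar y,\bar s) < \psi(\bar y)$, so that the interior-PDE viscosity inequalities are activated for both $u$ and $v$.

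From here I would apply the Crandall--Ishii--Lions maximum principle for semicontinuous functions to extract symmetric matrices $A,B$ in the second-order spatial jets of $u$ at $(\bar x,\bar t)$ and of $v$ at $(\bar y,\bar s)$, sharing the test gradient $p_\varepsilon := |\bar x-\bar y|^2(\bar x-\bar y)/\varepsilon^4$ and time slopes $(\bar t-\bar s)/\varepsilon^2 \pm \alpha$, with $A\leq B$. Subtracting the supersolution inequality from the subsolution inequality and using ellipticity of $\delta_{ij}-\hat p_{\varepsilon,i}\hat p_{\varepsilon,j}$ on the jet ordering $A\leq B$ will yield $2\alpha \leq 0$, the desired contradiction. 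In the degenerate subcase $p_\varepsilon=0$ (which forces $\bar x = \bar y$), I would invoke the second clauses in the definitions of sub- and supersolution, pick the common choice $\zeta = 0$, and subtract to obtain the same inequality $2\alpha\leq 0$.

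The main obstacle is the careful treatment of the singular case $p_\varepsilon = 0$: because the level-set operator is discontinuous where the test gradient vanishes, one must verify that a \emph{compatible} choice of $\zeta\in\R^d$ with $|\zeta|\leq 1$ can be made in both inequalities, and that the quartic penalization $|x-y|^4/\varepsilon^4$---rather than the usual quadratic one---is strong enough to decouple the test gradient's direction from the Hessian comparison $A\leq B$ when $\bar x\neq \bar y$, yet degenerates smoothly when $\bar x=\bar y$. Apart from this singular analysis, the argument is a routine adaptation of the Evans--Spruck template; the obstacles enter only through the elementary \emph{a priori} exclusions in the obstacle cases above.
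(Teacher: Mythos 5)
Your proposal is correct in outline but takes a genuinely different route from the paper. The paper follows the original Evans--Spruck scheme: it regularizes $u$ and $v$ by sup- and inf-convolutions $u^\epsilon$, $v_\epsilon$ (Lemma~\ref{Lem5.5}), which are semiconvex/semiconcave and hence twice differentiable a.e.~by Alexandrov, and then doubles variables with a function of the type $F(x,y,t,s) = u^\epsilon(x+y,t+s)-v_\epsilon(x,t)-\alpha t-\delta^{-1}(|y|^4+s^4)$; the new work in the obstacle case is precisely establishing estimates \eqref{comparison:eq8}--\eqref{comparison:eq9}, which guarantee that $u^\epsilon$ and $v_\epsilon$ are still sub/supersolutions \emph{without} obstacle near the maximum point, after which the argument reduces to the classical one. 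You instead propose the now-standard Crandall--Ishii--Lions doubling with the theorem on sums applied directly to $u$ and $v$, bypassing explicit convolutions. Both approaches are valid; the paper's version has the advantage of being an almost verbatim extension of Evans--Spruck and so can lean entirely on their Lemma~3.1, while yours is shorter and would be more in line with modern expositions of viscosity solution theory.

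Your exclusion of the obstacle cases is exactly the mechanism the paper uses (compare with \eqref{comparison:eq7}--\eqref{comparison:eq9}): the supersolution constraint $v\geq\phi$, continuity of $\phi$, and the persistent positive lower bound on $u(\bar x,\bar t)-v(\bar y,\bar s)$ together force strict inequalities near the maximum point, so only the interior PDE inequalities survive.

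One remark on the ``main obstacle'' you flag at the end. Since $p_\varepsilon = |\bar x-\bar y|^2(\bar x-\bar y)/\varepsilon^4$, the degenerate case $p_\varepsilon=0$ occurs precisely when $\bar x=\bar y$. At that point the quartic penalization has vanishing gradient \emph{and} vanishing Hessian in the spatial variables, so the second clauses of the sub/supersolution definitions give $\partial_t\eta\leq 0$ and $\partial_t\tilde\eta\geq 0$ for \emph{any} choices of $\zeta$, $\zeta'$ respectively (the operator evaluates to zero regardless of $\zeta$). No compatibility between $\zeta$ and $\zeta'$ is required, and no appeal to the theorem on sums is needed at all in this subcase; one tests directly against the smooth quartic. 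This is exactly the reason both your argument and the paper choose a quartic (rather than quadratic) penalization in the spatial doubling variable, and your worry about ``decoupling'' is therefore resolved automatically rather than being a genuine technical hurdle.
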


To prove Theorem \ref{thm4.4}, we use the following lemma. 
\begin{lemma}\label{Lem5.5}
Let $w:\R^d \times [0,\infty) \to \R$ be a bounded continuous function. Then there exist constants $A,B,C>0$ depending only on $\| w \|_{L^\infty (\R^d \times [0,\infty)])}$ such that the following hold:
\begin{enumerate}
\item For any $\epsilon >0$, $w_\epsilon \leq w \leq w^\epsilon$ on $\R^d \times [0,\infty)$, where 
\[
w^\epsilon (x,t)
:=
\sup_{y\in \R^d, s \in [0,\infty)} \{
w(y,s) - \epsilon ^{-1}
(|x-y|^2 + (t-s)^2)
\} 
\]
and
\[
w_\epsilon (x,t)
:=
\inf_{y\in \R^d, s \in [0,\infty)} \{
w(y,s) + \epsilon ^{-1}
(|x-y|^2 + (t-s)^2)
\} .
\]
\item For any $\epsilon >0$,
$\| w^\epsilon  \|_{L^\infty (\R^d\times [0,\infty))} \leq A$ and
$\| w_\epsilon  \|_{L^\infty (\R^d\times [0,\infty))} \leq A$.
\item If $(y,s) \in \R^d \times [0,\infty)$ satisfies 
\[
w^\epsilon (x,t)
= w(y,s) -\epsilon^{-1} (|x-y|^2 + (t-s)^2),
\]
then it holds that
\[
|x-y|\leq B\epsilon^{1/2}
\quad 
\text{and}
\quad
|t-s|\leq B\epsilon^{1/2}
\]
for any $\epsilon >0$.
The similar conclusion holds for $w_\epsilon$.
\item For any compact set $K \subset \R^d \times [0,\infty)$, $w^\epsilon \to w$ and $w_\epsilon \to w$ uniformly on $K$ as $\epsilon \downarrow 0$. 
\item $\mathrm{Lip} \, (w^\epsilon ) \leq C\epsilon ^{-1}$ and
$\mathrm{Lip} \, (w_\epsilon ) \leq C\epsilon ^{-1}$ for any $\epsilon >0$.
\item The functions
$(x,t)\mapsto w^\epsilon (x,t) +\epsilon ^{-1} (|x|^2 +t^2)$ and
$(x,t)\mapsto w_\epsilon (x,t) -\epsilon ^{-1} (|x|^2 +t^2)$ are
convex and concave, respectively.
\item Assume that for $(x_0,t_0) \in \R^d \times (B\epsilon^{1/2},\infty)$
and $(y_0,s_0) \in \R^d \times (0,\infty)$, it holds that
\[
w^\epsilon (x_0, t_0)
= w(y_0,s_0)-\epsilon ^{-1}
(|x_0 -y_0 | ^2 + (t_0 -s_0)^2)
\]
and $w$ is a viscosity subsolution at $(y_0,s_0)$ without the obstacles, 
that is, 
for any $\eta \in C^\infty (\R^d)$, if $w-\eta$ has a local maximum at $(y_0,s_0) \in \R^d \times (0,\infty)$, then 
    \begin{equation*}
    \left\{ 
\begin{array}{ll}
\partial _t \eta \leq \left( 
\delta_{ij} -\frac{\partial_{x_i} \eta \partial _{x_j} \eta}{|\nabla \eta|^2} \right)
\partial_{x_i x_j}\eta & \text{at} \ (y_0,s_0), \\
\qquad \text{if} \ \nabla \eta (y_0,s_0)\not=0, &
\end{array} \right.
    \end{equation*}
and
    \begin{equation*}
    \left\{ 
\begin{array}{ll}
\partial _t \eta \leq \left( 
\delta_{ij} -\zeta _i \zeta_j \right)
\partial_{x_i x_j}\eta & \text{at} \ (y_0,s_0), \\
\text{for some} \ \zeta \in \R^d \ \text{with} \ |\zeta| \leq 1, & \text{if} \ \nabla \eta (y_0,s_0)=0.
\end{array} \right.
    \end{equation*}
Then 
$w^\epsilon$ is a viscosity subsolution at $(x_0,t_0)$ without the obstacles.
The similar conclusion holds for $w_\epsilon$.
\end{enumerate}
\end{lemma}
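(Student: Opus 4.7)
The plan is to establish the seven claims in roughly the stated order, since each rests on the previous. The easy facts first: taking $(y,s) = (x,t)$ in the defining sup (resp.\ inf) gives $w^\epsilon \geq w \geq w_\epsilon$, and the upper bound $w^\epsilon \leq \|w\|_{L^\infty}$ is immediate; this proves (1) and (2) with $A = \|w\|_{L^\infty}$. For (3), if $(y,s)$ realizes the sup at $(x,t)$, combining with $w^\epsilon(x,t) \geq w(x,t)$ gives
\begin{equation*}
    \epsilon^{-1}\bigl(|x - y|^2 + (t-s)^2\bigr) \leq w(y,s) - w(x,t) \leq 2\|w\|_{L^\infty},
\end{equation*}
so $B = \sqrt{2\|w\|_{L^\infty}}$ works; claim (4) follows by combining (3) with the uniform continuity of $w$ on any relatively compact enlargement of $K$.

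For (6), expanding the square yields
\begin{equation*}
    w^\epsilon(x,t) + \epsilon^{-1}(|x|^2 + t^2) = \sup_{y,s}\bigl\{ w(y,s) - \epsilon^{-1}(|y|^2 + s^2) + 2\epsilon^{-1}(x \cdot y + t s)\bigr\},
\end{equation*}
which is a supremum of affine functions in $(x,t)$, hence convex; the concavity statement for $w_\epsilon$ is identical. Claim (5) then follows from (3): the smooth function $(x,t) \mapsto w(y,s) - \epsilon^{-1}(|x-y|^2 + (t-s)^2)$, at any point where it realizes $w^\epsilon$, has gradient of magnitude $2\epsilon^{-1}\sqrt{|x-y|^2+(t-s)^2}$ bounded by $O(\epsilon^{-1/2})$, and a standard envelope-of-smooth-functions argument promotes this to a global Lipschitz bound of the claimed order.

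The main obstacle is claim (7), the semi-convex regularization step that is the whole reason sup-convolutions are useful in viscosity-solution theory. The key idea is a translation argument. Given $\eta \in C^\infty$ with $w^\epsilon - \eta$ attaining a local maximum at $(x_0, t_0)$ and the sup in $w^\epsilon(x_0, t_0)$ realized at $(y_0, s_0)$, I would set
\begin{equation*}
    \tilde\eta(y,s) := \eta\bigl(y + (x_0 - y_0),\; s + (t_0 - s_0)\bigr),
\end{equation*}
and verify directly from the definition of $w^\epsilon$ that $w - \tilde\eta$ has a local maximum at $(y_0, s_0)$. The point is that the penalty $\epsilon^{-1}(|x-y|^2 + (t-s)^2)$ evaluated along the translate $(x,t) = (y+(x_0-y_0), s+(t_0-s_0))$ is the constant $\epsilon^{-1}(|x_0-y_0|^2 + (t_0-s_0)^2)$, which cancels from both sides of the local-max inequality inherited from $w^\epsilon - \eta$. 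Since $\tilde\eta$ differs from $\eta$ only by a fixed translation, all derivatives agree at the two base points: $\partial_t \tilde\eta(y_0,s_0) = \partial_t \eta(x_0, t_0)$, $\nabla \tilde\eta(y_0,s_0) = \nabla \eta(x_0, t_0)$, and $\nabla^2 \tilde\eta(y_0,s_0) = \nabla^2 \eta(x_0, t_0)$. The hypothesis $t_0 > B\epsilon^{1/2}$ together with (3) guarantees $s_0 > 0$, so the unconstrained subsolution inequality for $w$ at $(y_0, s_0)$ applies to $\tilde\eta$, and it transfers verbatim to the desired inequality for $\eta$ at $(x_0, t_0)$ by equality of derivatives. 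In the degenerate case $\nabla\eta(x_0,t_0) = 0$, the same vector $\zeta$ with $|\zeta|\leq 1$ furnished by the subsolution inequality for $w$ at $(y_0, s_0)$ serves as the witness for $w^\epsilon$ at $(x_0, t_0)$. The statement for $w_\epsilon$ is entirely symmetric.
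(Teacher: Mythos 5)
Your proof is correct and follows essentially the same route as the proof the paper cites (Evans--Spruck \cite[Lemma 3.1]{MR1100206}; the paper gives no proof of its own): the elementary estimates (1)--(4) and the Legendre-transform rewriting for (6) are standard, and the translation trick you use for (7) -- shifting the test function so that $w-\tilde\eta$ has a local maximum at the realizing point, then noting that all derivatives of $\tilde\eta$ at $(y_0,s_0)$ agree with those of $\eta$ at $(x_0,t_0)$ -- is exactly the key step in that reference. A minor remark: your gradient bound in (5) actually yields the sharper Lipschitz constant $O(\epsilon^{-1/2})$, which implies the stated $O(\epsilon^{-1})$; the "envelope of smooth functions" step can be made rigorous by Taylor-expanding the touching paraboloid to second order on balls of radius $\epsilon^{1/2}$ and iterating along segments, using that $\R^d\times[0,\infty)$ is convex.
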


See \cite[Lemma 3.1]{MR1100206} for this proof (the claim (7) differs from the original, but the proof shows this claim).
Note that since $w$ is bounded and continuous, one can easily check that $\sup$ and $\inf$ in (1) can be replaced by $\max$ and $\min$, respectively.

\begin{proof}[Proof of Theorem \ref{thm4.4}]
If the statement does not hold, then we may assume that there exist constants $T>0$ and $a>0$
such that
\[
\max_{(x,t) \in \R^d \times [0,T]}
(u(x,t)-v (x,t)) \geq a>0,
\]
by extending the functions periodically.
Then it holds that
\begin{equation}\label{comparison:eq1}
\max_{(x,t) \in \R^d \times [0,T]}
(u(x,t)-v (x,t)-\alpha t) \geq \frac{a}{2},
\end{equation}
for sufficiently small $\alpha>0$. Since $u$ and $v$ are periodic, 
\[
u^\epsilon \to u
\quad \text{and} \quad
v_\epsilon \to v
\quad 
\text{uniformly on}
\ \R^d \times [0,T],
\]
by Lemma \ref{Lem5.5} (4) (note that $u^\epsilon$ is different from $u_\varepsilon$, which appeared before this section).
By this, \eqref{comparison:eq1}, and the continuities of $u^\epsilon$ and $v_\epsilon$, for sufficiently small $\epsilon>0$, we have
\begin{equation}\label{comparison:eq2}
0\leq u^\epsilon -u\leq \frac{a}{16}, \ 
0\leq v -v_\epsilon \leq \frac{a}{16}
\ \text{on} \ \R^d \times [0,T]
\quad 
\text{and}
\quad
\max_{(x,t) \in \R^d \times [0,T]}
(u^\epsilon (x,t)-v_\epsilon (x,t)-\alpha t) \geq \frac{a}{4}.
\end{equation}
For $\delta>0$ and $x,y \in \R^d$ and $t, t+s \in [0,\infty)$, we denote
\[
F (x,y,t,s):=
u^\epsilon (x+y, t+s) - v_\epsilon (x,t) -\alpha t -\delta^{-1} (|y|^4 +s^4).
\]
By \eqref{comparison:eq2}, it holds that
\begin{equation}\label{compasiron:eq3}
\max _{(x,t), (x+y,t+s) \in \R^d \times [0,T]}
F (x,y,t,s) \geq \frac{a}{4}.
\end{equation}
Choose the points $(x_1,t_1), (x_1+y_1, t_1 +s_1) \in \R^d \times [0,T]$ such that
\begin{equation}\label{compasiron:eq4}
F(x_1,y_1,t_1,s_1)=
\max _{(x,t), (x+y,t+s) \in \R^d \times [0,T]}
F (x,y,t,s) .
\end{equation}

In this proof we only show that for sufficiently small $\epsilon >0$ and $\delta>0$,
\begin{equation}
\label{comparison:eq10}
u^\epsilon \ \text{is a viscosity subsolution near}
\ (x_1 +y_1,t_1+s_1)
\ \text{without the obstacles}
\end{equation}
and
\begin{equation}
\label{comparison:eq11}
v_\epsilon \ \text{is a viscosity supersolution near}
\ (x_1,t_1)
\ \text{without the obstacles},
\end{equation}
since by using these key properties, we obtain the contradiction (see \cite[pages 649--652]{MR1100206} for details).
Note that the argument after showing these is exactly the same as the original proof, since there is no need to consider the obstacles.

By $F(x_1,y_1 ,t_1,s_1) >0$ and Lemma \ref{Lem5.5} (2), there exists $C>0$ depending only on $\| u \|_{L^\infty}$ and $\|v\|_{L^\infty}$
such that 
\begin{equation}\label{comparison:eq5}
|y_1| , |s_1| \leq C \delta ^{1/4}.
\end{equation}
Next we show 
\begin{equation}\label{comparison:eq6}
t_1,t_1+s_1 >B\epsilon ^{1/2}
\end{equation}
for sufficiently small $\epsilon,\delta>0$, where $B>0$ is the constant given in Lemma \ref{Lem5.5}.
Assume that $0\leq t_1 < 2 B\epsilon ^{1/2}$. Then
\begin{equation*}
\begin{split}
\frac{a}{4} \leq F (x_1,y_1,t_1,s_1)
\leq u^\epsilon (x_1 + y_1, t_1 + s_1) -v_\epsilon (x_1,t_1) 
\leq u (x_1 + y_1, t_1 + s_1) -v (x_1,t_1) + \frac{a}{8},
\end{split}
\end{equation*}
where we used \eqref{comparison:eq2}.
By this,  \eqref{comparison:eq5}, and the continuities of $u$ and $v$, we may assume that
\begin{equation}
\label{comparison:eq7}
\frac{a}{16} \leq u(x_1 +y_1,t_1 +s_1) - v (x_1 + y_1,t_1 +s_1)
\quad
\text{and}
\quad
\frac{a}{16} \leq u(x_1,t_1) - v (x_1,t_1)
\end{equation}
for sufficiently small $\epsilon, \delta>0$.
But this contradicts $u(x_1,0) \leq v (x_1,0)$
for sufficiently small $\epsilon>0$, since $t_1 \to 0$ as $\epsilon \to 0$.
Hence $t_1 \geq 2 B \epsilon ^{1/2}$ and thus \eqref{comparison:eq6} holds by \eqref{comparison:eq5}.

Next we show that for sufficiently small $\epsilon >0$ and $\delta>0$
\begin{equation}
\label{comparison:eq8}
\phi(x) < u (x,t)
\qquad \text{if}
\ |(x_1 +y_1)-x|\leq B\epsilon ^{1/2}
\ \text{and} \
\ |(t_1 +s_1)-t|\leq B\epsilon ^{1/2},
\end{equation}
and
\begin{equation}
\label{comparison:eq9}
 v (x,t) <\psi (x)
\qquad \text{if}
\ |x_1 -x|\leq B\epsilon ^{1/2}
\ \text{and} \
\ |t_1 -t|\leq B\epsilon ^{1/2}.
\end{equation}
By \eqref{comparison:eq7} and $\phi (x_1+y_1) \leq v(x_1+y_1,t_1 +s_1)$, we have
$\phi (x_1+y_1) + \frac{a}{16} \leq u (x_1+y_1 , t_1 + s_1)$. Hence we obtain \eqref{comparison:eq8} for sufficiently small $\epsilon >0$ and $\delta >0$. The inequality \eqref{comparison:eq9} can be proved similarly.

We fix $\alpha,\epsilon,\delta>0$ such that all the properties mentioned above hold. Then by Lemma \ref{Lem5.5} (7), \eqref{comparison:eq6}, and \eqref{comparison:eq8}, we obtain \eqref{comparison:eq10}. Similarly one can check  \eqref{comparison:eq11}.
\end{proof}

\bigskip

Now we prove the existence of the viscosity solution of~\eqref{eq:levelset_constraint}--\eqref{eq:levelset_contact} (see \cite[Theorem 4.2]{MR1100206}).

\begin{proof}[Proof of Theorem \ref{thm:viscositysol}]
From Theorem \ref{thm4.4}, the solution of~\eqref{eq:levelset_constraint}--\eqref{eq:levelset_contact} is unique if it exists.
    By extending the functions periodically, we may assume that
\begin{equation}\label{eq:ap1}
u _{\varepsilon_i} \to u
\quad 
\text{locally uniformly in}
\ \R^d \times [0,\infty), \quad
\phi (x) \leq u (x,t) \leq \psi (x) \ \text{for any} \ (x,t) \in \R^d \times [0,\infty),
\end{equation}
where $u_{\varepsilon_i}$ and $u$ are given by Theorem \ref{thm:2.5}.

Let $\eta \in C^\infty (\R^{d+1})$ satisfy that $u-\eta$ takes a strict local maximum at $(x_0, t_0) \in \R^d \times (0,\infty)$
and $u(x_0,t_0)>\phi (x_0,t_0)$. By \eqref{eq:ap1}, we may assume that there exists $N\geq 1$ such that
\begin{equation}\label{eq:ap2}
    u_{\varepsilon _k} >\phi
    \qquad \text{near} \ (x_0,t_0) \ \text{for any} \ k\geq N,
\end{equation}
and $u_{\varepsilon_k} -\eta$ takes a local maximum at $(x_k,t_k)$ and $(x_k, t_k) \to (x_0,t_0)$ as $k\to \infty$. At the point $(x_k, t_k)$, we have
\begin{equation}\label{eq:ap3}
    \nabla (u_{\varepsilon_k} -\eta)=0, \quad
    \partial_t (u_{\varepsilon_k} -\eta)=0,
    \quad 
    \nabla ^2 (u_{\varepsilon_k} -\eta) \leq 0.
\end{equation}
In addition, at $(x_k,t_k)$ with $k\geq N$, by \eqref{eq:ap2} and \eqref{eq:ap3}, 
we compute that
\begin{equation*}
    f_{\varepsilon_k} 
= -\frac{4}{\varepsilon_k} 
\left\{
((u_{\varepsilon_k} -\psi )_+)^3
-
((\phi -u_{\varepsilon_k} )_+)^3
\right\}
=
-\frac{4}{\varepsilon_k} 
((u_{\varepsilon_k} -\psi )_+)^3
\leq 0
\end{equation*}
and thus
\begin{equation}\label{eq:ap5}
\begin{split}
    \partial _t \eta 
    =& \, \partial u_{\varepsilon_k}
    =
    \left( \delta_{ij} - \dfrac{\partial_{x_i} u_{\varepsilon_k} \partial_{x_j} u_{\varepsilon_k}}{|\nabla u_{\varepsilon_k} |^2 +\varepsilon_k ^2} \right) \partial_{x_i x_j} u_{\varepsilon_k}
+ |\nabla u_{\varepsilon_k}|_{\varepsilon_k} f_{\varepsilon_k} (u_{\varepsilon_k})\\
\leq &\, 
\left( \delta_{ij} - \dfrac{\partial_{x_i} \eta \partial_{x_j} \eta}{|\nabla \eta |^2 +\varepsilon_k ^2} \right) \partial_{x_i x_j} \eta.
\end{split}
\end{equation}
If $|\nabla \eta (x_0,t_0)|>0$, then by taking the limit of \eqref{eq:ap5}, we have
\begin{equation}\label{eq:ap6}
\begin{split}
    \partial _t \eta 
\leq  
\left( \delta_{ij} - \dfrac{\partial_{x_i} \eta \partial_{x_j} \eta}{|\nabla \eta |^2 } \right) \partial_{x_i x_j} \eta
\qquad \text{at} \ (x_0,t_0).
\end{split}
\end{equation}
On the other hand, if $|\nabla \eta (x_0,t_0)|=0$, 
we have
\begin{equation*}
\begin{split}
    \partial _t \eta 
\leq  
\left( \delta_{ij} - \zeta _i ^k \zeta _j ^k \right) \partial_{x_i x_j} \eta
\qquad \text{at} \ (x_k,t_k),
\end{split}
\end{equation*}
where 
$\zeta_i ^k :=\dfrac{\partial_{x_i} \eta }{|\nabla \eta |_{\varepsilon _k} }$. Since 
$|\zeta ^k| =|(\zeta _1 ^k,\dots,\zeta_d ^k)|\leq 1$,
by taking some subsequence (denoted by the same index), we may assume that $\zeta^k \to \zeta$ in $\R^d$. Thus we obtain 
\begin{equation}\label{eq:ap7}
\begin{split}
    \partial _t \eta 
\leq  
\left( \delta_{ij} - \zeta _i \zeta _j  \right) \partial_{x_i x_j} \eta
\qquad \text{at} \ (x_0,t_0)
\ \text{with} \ |\zeta|\leq 1
\end{split}
\end{equation}
by taking the limit $k\to \infty$.
Next, let $\eta \in C^\infty (\R^{d+1})$ satisfy that $u-\eta$ takes a local maximum at $(x_0, t_0) \in \R^d \times (0,\infty)$
and $u(x_0,t_0)>\phi (x_0,t_0)$. Set
\[
\tilde \eta (x,t):=
\eta (x,t)+ |x-x_0|^4 + |t-t_0|^4 .
\]
Then $u-\tilde \eta$ takes a strict local maximum at $(x_0,t_0)$ and $u(x_0,t_0) > \phi (x_0,t_0)$. Then one can use the argument above. Moreover, because $\partial _t \tilde \eta =\partial _t \eta$, $\nabla \tilde \eta =\nabla \eta$, and $\nabla ^2 \tilde \eta= \nabla ^2 \eta $ at $(x_0,t_0)$,
\eqref{eq:ap6} and \eqref{eq:ap7} hold.
Thus $u$ is a viscosity subsolution of~\eqref{eq:levelset_constraint}--\eqref{eq:levelset_contact}.
Similarly one can check that $u$ is a viscosity supersolution of~\eqref{eq:levelset_constraint}--\eqref{eq:levelset_contact}, and hence $u$ is the unique viscosity solution of~\eqref{eq:levelset_constraint}--\eqref{eq:levelset_contact}.
\end{proof}

The proof of the following relabeling property is almost equivalent to \cite[Theorem 2.8]{MR1100206} and is therefore omitted.
\begin{theorem}
Assume that $u$ is a viscosity solution of~\eqref{eq:levelset_constraint}--\eqref{eq:levelset_contact} with the obstacles $\phi$ and $\psi$, and $F:\R \to \R$ is a smooth function with $F'(s)>0$ for any $s \in \R$. Then $v:= F (u)$ is a viscosity solution of~\eqref{eq:levelset_constraint}--\eqref{eq:levelset_contact} with obstacles~$F(\phi)$ and $F(\psi)$.
\end{theorem}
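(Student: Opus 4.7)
The plan is to prove the relabeling property by directly verifying the viscosity subsolution and supersolution definitions for $v := F(u)$, exploiting the geometric invariance of the level set formulation. The key idea, going back to Evans--Spruck, is that since $F\in C^\infty$ with $F' > 0$ everywhere, its inverse $F^{-1}$ is smooth, so any smooth test function $\eta$ for $v$ can be pulled back to a smooth test function $\tilde \eta := F^{-1} \circ \eta$ for $u$ in a neighborhood of the contact point. The constraint is immediate: $\phi \leq u \leq \psi$ combined with strict monotonicity of $F$ gives $F(\phi) \leq v \leq F(\psi)$.

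For the subsolution part, suppose $\eta \in C^\infty(\R^d)$ is such that $v - \eta$ has a local maximum at $(x_0, t_0)$ with $v(x_0, t_0) > F(\phi(x_0))$. Adding a constant to $\eta$ (which changes neither the maximum property nor any derivative of $\eta$), I will assume $\eta(x_0, t_0) = v(x_0, t_0)$, so $\eta$ takes values in the range of $F$ near $(x_0, t_0)$ and $\tilde \eta := F^{-1}\circ \eta$ is well-defined and smooth there. Then $u - \tilde \eta$ has a local maximum at $(x_0, t_0)$ because $F^{-1}$ is strictly increasing, and $u(x_0, t_0) > \phi(x_0)$ by strict monotonicity of $F$. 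Hence the subsolution inequality for $u$ applies with test function $\tilde \eta$ at $(x_0, t_0)$.

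The heart of the argument is the chain rule at $(x_0, t_0)$. With $c := F'(u(x_0, t_0)) > 0$ and $c' := F''(u(x_0, t_0))$ one has
\[
\partial_t \eta = c\,\partial_t \tilde \eta,\qquad \nabla \eta = c\,\nabla \tilde \eta,\qquad \partial_{x_i x_j}\eta = c'\,\partial_{x_i}\tilde\eta\,\partial_{x_j}\tilde\eta + c\,\partial_{x_i x_j}\tilde \eta.
\]
In the non-degenerate case $\nabla \eta(x_0,t_0)\neq 0$ (equivalently $\nabla \tilde \eta(x_0,t_0)\neq 0$), the projection matrix is invariant under the scaling by $c$, i.e.,
\[
\delta_{ij}-\frac{\partial_{x_i}\eta\,\partial_{x_j}\eta}{|\nabla \eta|^2} = \delta_{ij}-\frac{\partial_{x_i}\tilde\eta\,\partial_{x_j}\tilde\eta}{|\nabla \tilde \eta|^2},
\]
and this projection annihilates the rank-one term $c'\,\partial_{x_i}\tilde\eta\,\partial_{x_j}\tilde\eta$. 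Consequently, the projected Hessian of $\eta$ equals $c$ times the projected Hessian of $\tilde \eta$; multiplying the subsolution inequality for $\tilde \eta$ by $c > 0$ produces exactly the desired inequality for $\eta$. In the degenerate case $\nabla \eta(x_0,t_0)=0$, also $\nabla \tilde \eta(x_0,t_0)=0$, so the $c'$ contribution vanishes and $\partial_{x_ix_j}\eta = c\,\partial_{x_ix_j}\tilde\eta$ at the base point; multiplying $\partial_t \tilde \eta \leq (\delta_{ij}-\zeta_i\zeta_j)\,\partial_{x_ix_j}\tilde \eta$ by $c>0$ then gives the required inequality for $\eta$ with the same $\zeta$, preserving $|\zeta|\leq 1$.

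The supersolution case is symmetric: the condition $v(x_0,t_0) < F(\psi(x_0))$ translates via strict monotonicity to $u(x_0,t_0) < \psi(x_0)$, the local maximum becomes a local minimum, the inequalities reverse, and the identical chain-rule computation goes through. There is no genuinely hard step here — the argument is pure chain-rule bookkeeping combined with the geometric invariance of the degenerate elliptic operator under relabeling of level sets. The only subtle point is making sure $\tilde \eta = F^{-1}\circ \eta$ is smoothly defined on a neighborhood of $(x_0,t_0)$, which is handled by the constant shift of $\eta$ described above together with the local nature of the viscosity inequality.
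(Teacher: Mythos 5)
Your proof is correct and follows the same standard chain-rule strategy that the paper implicitly relies on by citing \cite[Theorem~2.8]{MR1100206} and omitting the argument. You correctly handle all the points where the obstacle terms enter (the translation of the constraint $\phi\leq u\leq\psi$ to $F(\phi)\leq v\leq F(\psi)$, and the equivalence $v(x_0,t_0)>F(\phi(x_0))\iff u(x_0,t_0)>\phi(x_0)$ by strict monotonicity), you correctly address the domain-of-definition issue for $F^{-1}\circ\eta$ via the constant shift and openness of $F(\R)$, and the projection-annihilates-the-rank-one-term computation and the degenerate-gradient case with the same $\zeta$ are both handled correctly.
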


\section*{Acknowledgments}
    This project started during discussions between the authors at Kyoto University and the University of Regensburg. The hospitality of these institutions is gratefully acknowledged.
    The second author is supported
    by JSPS KAKENHI grants
    JP23K03180, JP23H00085, 
    JP24K00531, and JP25K00918.

\bibliographystyle{alpha}
\bibliography{sample}

\end{document}